\documentclass[12pt]{article}

\usepackage{amsmath, amsthm, amssymb, accents,amscd}
\usepackage{enumerate}
\usepackage{pdflscape}
\usepackage{caption}

\usepackage{ifpdf}
\ifpdf
\usepackage[pdftex]{graphicx}
\else
\usepackage[dvips]{graphicx}
\fi
\usepackage{tikz}
 	 \usetikzlibrary{arrows,backgrounds}
\usepackage[all]{xy}
\usepackage{tikz-cd}
\usepackage{multicol}

\input xy
\xyoption{all} 

\usepackage[pdftex,plainpages=false,hypertexnames=false,pdfpagelabels]{hyperref}
\newcommand{\arxiv}[1]{\href{http://arxiv.org/abs/#1}{\tt arXiv:\nolinkurl{#1}}}
\newcommand{\arXiv}[1]{\href{http://arxiv.org/abs/#1}{\tt arXiv:\nolinkurl{#1}}}

\newcommand{\googlebooks}[1]{(preview at \href{http://books.google.com/books?id=#1}{google books})}

\usepackage{xcolor}
\definecolor{dark-red}{rgb}{0.7,0.25,0.25}
\definecolor{dark-blue}{rgb}{0.15,0.15,0.55}
\definecolor{medium-blue}{rgb}{0,0,.8}
\definecolor{DarkGreen}{RGB}{0,150,0}
\definecolor{rho}{named}{red}
\hypersetup{
   colorlinks, linkcolor={purple},
   citecolor={medium-blue}, urlcolor={medium-blue}
}

\usepackage{longtable}
\usepackage{fullpage}

\theoremstyle{plain}
\newtheorem{thm}{Theorem}[section]
\newtheorem*{thm*}{Theorem}
\newtheorem{thmalpha}{Theorem}

\newtheorem{cor}[thm]{Corollary}

\newtheorem*{cor*}{Corollary}

\newtheorem*{conj*}{Conjecture}
\newtheorem*{lem*}{Lemma}
\newtheorem{lem}[thm]{Lemma}
\newtheorem{prop}[thm]{Proposition}

\newtheorem*{quest*}{Question}
\newtheorem*{claim*}{Claim}

\theoremstyle{definition}

\newtheorem{defn}[thm]{Definition}

\theoremstyle{remark}

\newtheorem{rem}[thm]{Remark}

\DeclareMathOperator{\Aut}{Aut}

\DeclareMathOperator{\tDiff}
{\mathrm{D}\!\widetilde{\,i\hspace{1.5mm}}\hspace{-1.5mm}\mathrm{ff}} 
\DeclareMathOperator{\Diff}{Diff}
\DeclareMathOperator{\End}{End}

\DeclareMathOperator{\Ann}{Ann}

\DeclareMathOperator{\tAnn}{A\widetilde{\!\!\phantom{\imath}n\phantom{\imath}\!\!}n}

\DeclareMathOperator{\Hom}{Hom}
\DeclareMathOperator{\spann}{span}
\DeclareMathOperator{\id}{id}

\DeclareMathOperator{\Mob}{M\ddot{o}b}
\DeclareMathOperator{\Univ}{Univ}
\DeclareMathOperator{\tUniv}{U\widetilde{ni}v}

\DeclareMathOperator{\Vir}{Vir}

\newcommand{\relint}{\mathrm{Rel\text{-}Int}}

\newcommand{\comment}[1]{}

\newcommand{\be}{\begin{enumerate}[label=(\arabic*)]}
\newcommand{\ee}{\end{enumerate}}

\newcommand{\ip}[1]{\langle #1 \rangle}

\newcommand{\abs}[1]{\left| #1 \right|}

\def\semicolon{;}
\def\applytolist#1{
    \expandafter\def\csname multi#1\endcsname##1{
        \def\multiack{##1}\ifx\multiack\semicolon
            \def\next{\relax}
        \else
            \csname #1\endcsname{##1}
            \def\next{\csname multi#1\endcsname}
        \fi
        \next}
    \csname multi#1\endcsname}

\def\calc#1{\expandafter\def\csname c#1\endcsname{{\mathcal #1}}}
\applytolist{calc}QWERTYUIOPLKJHGFDSAZXCVBNM;
\def\bbc#1{\expandafter\def\csname bb#1\endcsname{{\mathbb #1}}}
\applytolist{bbc}QWERTYUIOPLKJHGFDSAZXCVBNM;
\def\bfc#1{\expandafter\def\csname bf#1\endcsname{{\mathbf #1}}}
\applytolist{bfc}QWERTYUIOPLKJHGFDSAZXCVBNM;
\def\sfc#1{\expandafter\def\csname s#1\endcsname{{\sf #1}}}
\applytolist{sfc}QWERTYUIOPLKJHGFDSAZXCVBNM;

\newcommand{\noshow}[1]{}
\newcommand{\MR}[1]{}

\makeatletter
\newcommand{\doublewidetilde}[1]{{%
  \mathpalette\double@widetilde{#1}%
}}
\newcommand{\double@widetilde}[2]{%
  \sbox\z@{$\m@th#1\widetilde{#2}$}%
  \ht\z@=.9\ht\z@
  \widetilde{\box\z@}%
}
\makeatother

\usetikzlibrary{shapes}
\usetikzlibrary{backgrounds}
\usetikzlibrary{decorations,decorations.pathreplacing,decorations.markings}
\usetikzlibrary{fit,calc,through}
\usetikzlibrary{external}
\tikzset{
	super thick/.style={line width=3pt}
}
\tikzstyle{shaded}=[fill=red!10!blue!20!gray!30!white]
\tikzstyle{unshaded}=[fill=white]
\tikzstyle{empty box}=[circle, draw, thick, fill=white, opaque, inner sep=2mm]
\tikzstyle{annular}=[scale=.7, inner sep=1mm, baseline]
\tikzstyle{rectangular}=[scale=.75, inner sep=1mm, baseline=-.1cm]
\tikzstyle{mid>}=[decoration={markings, mark=at position 0.5 with {\arrow{>}}}, postaction={decorate}]
\tikzstyle{mid<}=[decoration={markings, mark=at position 0.5 with {\arrow{<}}}, postaction={decorate}]
\tikzstyle{over}=[double, draw=white, super thick, double=]

\title{A genus zero functorial CFT associated to the vacuum sector of a conformal net}
\author{Andr\'e G. Henriques, James E. Tener}
\date{}

\begin{document}

\maketitle

\begin{abstract}
Starting from an arbitrary conformal net, we construct a genus zero functorial conformal field theory whose Hilbert space is the vacuum sector of the net.
Specifically, we construct an algebra for the operad of little discs and conformal embeddings with values in the category of Hilbert spaces and bounded linear maps.
We work with closed discs, and
our conformal embeddings explicitly allow the image of an incoming disc to overlap with the boundary of the outgoing disc.
As an application, we show that all conformal nets satisfy the trace class condition: if $L_0$ is the conformal Hamiltonian of the net, then the operators $r^{L_0}$ are trace class whenever $0 \le r < 1$.
In particular, the $L_0$-eigenspaces of a conformal net are automatically finite-dimensional.
\end{abstract}

\tableofcontents

\section{Introduction}\label{sec: introduction}

\emph{Conformal nets} are a formalisation of unitary 2d chiral conformal field theories in the spirit of Haag-Kastler algebraic quantum field theory.
The key data of a conformal net is a family of von Neumann algebras $\cA(I)$ indexed by intervals $I$ of the unit circle $S^1$. 
These von Neumann algebras act on a common Hilbert space $H_0$ called the vacuum sector of the chiral CFT, and transform appropriately under a projective unitary positive energy representation of the group $\Diff(S^1)$.

The first goal of this article is to construct, for every conformal net, the data of a genus zero functorial field theory.
This construction provides bounded linear maps associated to every configuration of discs $D_1, \ldots, D_n$ disjointly embedded in a larger disc $D$. We think of such a bounded linear map as the evolution operator associated to the Riemann surface $D \setminus (\mathring D_1 \cup \cdots \cup \mathring D_n)$, which is a genus zero cobordism between $n$ circles and one circle.

\subsubsection*{Statement of main results}

We now outline more precisely the structure that we will construct.

\begin{defn}\label{def: multidisc}
By a \emph{disc}, we shall mean a Riemann surface with boundary which is isomorphic to the closed unit disc in the complex plane (here and throughout the article, holomorphic maps of Riemann surfaces are taken to be smooth up to the boundary), and a \emph{multidisc} is a finite disjoint union of discs.
A \emph{multidisc embedding} is a family of discs $D_1, \ldots, D_n, D$ along with a holomorphic (smooth up to the boundary) embedding of discs $\mathfrak d:D_1 \sqcup \cdots \sqcup D_n \to D$.
For a nonnegative integer $n$, let $\mathfrak D(n)$ denote the collection of all multidisc embeddings of $n$ discs $D_1, \ldots, D_n$ into a disc $D$, and let $\mathfrak D = \bigcup_{n\in\bbN} \mathfrak D(n)$.

Given a finite dimensional complex manifold $M$ (without boundary), a \emph{holomorphic family of multidisc embeddings} parametrized by $M$ is a holomorphic map $\mathfrak d: M \times (D_1 \sqcup \cdots \sqcup D_n) \to D$ such that for each $m \in M$ the induced map $\mathfrak d_m := \mathfrak d(m, -)$ is a multidisc embedding $D_1 \sqcup \cdots \sqcup D_n \to D$.
\end{defn}

The collections of all discs and all multidisc embeddings respectively form the colours and the operations of an operad (also known as the objects and morphisms of a multicategory).

\begin{defn}\label{defn: operad of conformal discs}
We denote by $\mathfrak D$ the operad whose colours are conformal discs and whose operations are multidisc embeddings, and call it the \emph{operad of conformal discs}.
\end{defn}

Note that the identity map $D\to D$ is an allowed operation in $\mathfrak D$ (the operad of conformal discs is a unital operad).
More generally, we do not impose any restrictions on the boundary intersections $\mathfrak d(\partial D_j) \cap \partial D$.

Starting from a conformal net, we will construct an algebra for the operad of conformal discs valued in the category of Hilbert spaces and bounded linear maps.
That is, for each disc $D$ we have a Hilbert space $H_0(D)$, the vacuum sector associated to the disc $D$, and for each multidisc embedding $\mathfrak d : D_1\sqcup \cdots \sqcup D_n \to D$ we have a bounded linear map
\[
Y_{\mathfrak d}:H_0(D_1) \otimes \cdots \otimes H_0(D_n) \to H_0(D).
\]
These maps satisfy the operadic composition law
\[
Y_{\mathfrak d' \circ_i \mathfrak d} = Y_{\mathfrak d'} \circ_i Y_{\mathfrak d} := Y_{\mathfrak d'} \circ (\mathrm{id} \otimes \cdots \otimes Y_{\mathfrak d} \otimes \cdots \otimes \mathrm{id})
\]
whenever the composition $\mathfrak d' \circ_i \mathfrak d$ is defined.
{Moreover, when $\mathfrak d_m$ is a holomorphic family of multidisc embeddings $D_1\sqcup \cdots \sqcup D_n \to D$, the operators $Y_{\mathfrak d_m}$ depend holomorphically on $m$, for the norm topology on $B\big(H_0(D_1) \otimes \cdots \otimes H_0(D_n),\,H_0(D)\big)$.}

We now expand on the particular construction which will follow in the body of the article.
Let $\cA$ be a conformal net.
We begin by recalling \cite{BartelsDouglasHenriques15} that there is a \emph{coordinate-free conformal net} associated to $\cA$, which we denote with the same symbol.
The coordinate-free conformal net assigns to every abstract (i.e. not necessarily embedded) interval $I$ an abstract von Neumann algebra (i.e. a $W^*$-algebra).
This construction is functorial in the interval $I$, and given an isomorphism $f: I_1 \rightarrow I_2$ between intervals we write $f_*:\cA(I_1) \to \cA(I_2)$ for the corresponding isomorphism of von Neumann algebras.

For each disc $D$ there is also an associated Hilbert space $H_0(D)$ along with a distinguished vacuum vector $\Omega_D \in H_0(D)$, and
for each interval $I \subset \partial D$ there is a representation of $\cA(I)$ on $H_0(D)$.
In the special case when $D$ is the unit disc $\bbD \subset \bbC$, we recover the original vacuum Hilbert space and vacuum vector of the conformal net.
{If $\varphi:\partial D_1 \to \partial D_2$ is an orientation-preserving diffeomorphism, there is an associated unitary operator $U_{\varphi}:H_0(D_1) \to H_0(D_2)$, well-defined up to phase.
Moreover, when $\varphi$ extends to a biholomorphic map $D_1 \cong D_2$, the phase ambiguity can be lifted, and there is a canonical choice of $U_\varphi$ characterised by the requirement that $U_{\varphi}\Omega_{D_1} = \Omega_{D_2}$.}

In \cite{HenriquesTenerWorms}, we constructed vectors 
\[
|x_1 \cdots x_n\rangle_{D} \in H_0(D)
\]
in the vacuum Hilbert spaces $H_0(D)$ corresponding to configurations of disjoint extendable\footnote{An interval $I$ in a disc $D$ is called extendable if there exists an interval $I_+ \subset D$ containing $I$ in its interior, and if the orientations of $I$ and $\partial D$ agree on their intersection.} intervals $I_1, \ldots, I_n \subset D$, each labelled by an element $x_j \in \cA(I_j)$ of the corresponding von Neumann algebra\footnote{We call these `worm insertions,' analogous to the more standard notion of `point insertions' of finite-energy vectors.}.
For brevity, we shall sometimes denote a collection $I_1, \ldots, I_n$ of intervals by $\underline{I}$, and a tuple of algebra elements by $\underline x \in \cA(\underline I)$, where $\cA(\underline I):= \cA(I_1) \times \cdots \times \cA(I_n)$.
In this case the corresponding insertion is denoted $| \underline x \rangle_D$.
The empty insertion $| \,\, \rangle_D$ agrees with the vacuum vector $\Omega_D$.

We now describe the functorial chiral CFT associated to a conformal net $\cA$.
To a disc $D$, we associate the vacuum Hilbert space $H_0(D)$.
The linear map $\bbC \to H_0(D)$ corresponding to the embedding $\emptyset \to D$ in $\mathfrak D(0)$ sends $1 \mapsto \Omega_D$.
The general construction of an algebra for the operad of conformal discs is given as follows (Theorem~\ref{thm: Yd exists} {and Proposition~\ref{prop: holomorphic dependence}} in the body of the article).
\begin{thmalpha}\label{thmalpha: genus zero Segal CFT}
    For each multidisc embedding $\mathfrak d:D_1 \sqcup \cdots \sqcup D_n \to D \in \mathfrak D$, there exists a unique bounded linear map 
    \[
    Y_{\mathfrak d}:H_0(D_1) \otimes \cdots \otimes H_0(D_n) \to H_0(D)
    \]
    characterised by the property that for every set $\underline I_1,\ldots,\underline I_n$ of collections of disjoint extendable intervals, with $\underline I_j$ in $D_j$, and every $\underline x_j \in \cA(\underline I_j)$, we have 
    \[
Y_{\mathfrak d} \big( |\underline x_1\rangle_{D_1} \otimes \cdots \otimes |\underline x_n\rangle_{D_n} \big) = | \mathfrak d_*(\underline x_1) \cdots \mathfrak d_*(\underline x_n) \rangle_D.
\]
These maps are compatible with the composition of multidisc embeddings, i.e., they form an algebra for the operad of conformal discs.

Furthermore, if $\mathfrak d_m:D_1 \sqcup \cdots \sqcup D_n \to D$ is a holomorphic family of multidisc embeddings parametrised by $m\in M$, for some complex manifold $M$ (see Definition~\ref{def: multidisc}), then the operators $Y_{\mathfrak d_m}$ depend holomorphically on the parameter $m$, for the norm topology on $B\big(H_0(D_1) \otimes \cdots \otimes H_0(D_n), H_0(D)\big)$.
\end{thmalpha}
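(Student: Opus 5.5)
Uniqueness is the easy half. The worm insertions $|\underline x\rangle_{D_j}$ span a dense subspace of $H_0(D_j)$. Indeed, already the single-interval insertions $|x\rangle_{D_j}=x\Omega_{D_j}$, for $x\in\cA(I)$ with $I\subset\partial D_j$, are dense by the Reeh--Schlieder property of the vacuum. Hence elementary tensors of insertions span a dense subspace of $H_0(D_1)\otimes\cdots\otimes H_0(D_n)$, and a bounded map is determined by its values there. Operadic compatibility then follows formally from uniqueness. For a composite $\mathfrak d'\circ_i\mathfrak d$, both $Y_{\mathfrak d'\circ_i\mathfrak d}$ and $Y_{\mathfrak d'}\circ_i Y_{\mathfrak d}$ send an elementary tensor of insertions to $|(\mathfrak d'\circ_i\mathfrak d)_*(\underline x_1)\cdots\rangle_D$. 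This uses $(\mathfrak d'\circ\mathfrak d)_*=\mathfrak d'_*\mathfrak d_*$ (functoriality of the coordinate-free net), together with the fact that images of extendable intervals under embeddings are again extendable. The work is therefore existence, i.e.\ boundedness of the map defined on insertions.

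For existence I would reduce to a single composite case. Every multidisc embedding factors (up to biholomorphisms of discs, which act by the canonical unitaries $U_\varphi$ fixing vacua, compatibly with insertions by the construction in \cite{HenriquesTenerWorms}) as a tensor product of ``annular'' embeddings $D_j\to D_j'$ followed by an embedding of disjoint discs whose boundaries are well separated. In practice I would take the following approach.

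\textbf{Case $n=1$.} Write the embedding $D_1\to D$ as $\mathfrak d$. Cover $\partial D_1$ by two intervals $I,J$ and write the vacuum vector as an insertion, in a suitable limit sense. The key input from \cite{HenriquesTenerWorms} is that $|\underline x\rangle_D$ is bounded in terms of $\prod\|x_k\|$. I would first handle $n=1$ with $\mathfrak d(D_1)$ in the interior, i.e.\ $q^{L_0}$-type operators composed with $U_\varphi$. Here one wants $Y_{\mathfrak d}$ to be $U_\varphi$ composed with an operator of the form $e^{\text{(Virasoro)}}$, whose boundedness is known from positive energy. Allowing boundary contact is then handled by factoring as a product of ``half'' embeddings, each fixing an arc of the boundary. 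For the latter, define $Y$ by $x\Omega\mapsto \mathfrak d_*(x)\Omega_D$ for $x$ in the algebra of the fixed arc complement, and check it is bounded via modular theory: this is an analytic continuation of a Möbius/dilation action, bounded by the Bisognano--Wichmann theorem.

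\textbf{General $n$.} Reduce to $n=2$ by induction. For two discs in $D$, choose a curve separating them and use Haag duality to express $H_0(D)$ as a fusion $H_0(D_1)\boxtimes H_0(D_2)$ of vacuum sectors along the separating arc. $Y_{\mathfrak d}$ is then the Connes fusion of the two $n=1$ maps, and boundedness follows from boundedness of Connes fusion of bounded intertwiners.

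Holomorphic dependence: the matrix coefficients $\langle |\underline y\rangle_D, Y_{\mathfrak d_m}(\cdots)\rangle$ should be holomorphic in $m$, using the holomorphic-in-parameters property of insertions from \cite{HenriquesTenerWorms}. Local uniform boundedness of $\|Y_{\mathfrak d_m}\|$ (from the explicit factorisations) then upgrades weak holomorphy on a dense set to norm holomorphy, by the standard vector-valued argument.

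The main obstacle is the boundedness in the boundary-touching case, where no $q^{L_0}$ damping is available. I would attack it first via the modular-theory half-embedding argument described above.
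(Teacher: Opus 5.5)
Your uniqueness argument and the formal derivation of operadic compatibility are fine and agree with the paper. The existence step for $n=2$, however, has a genuine gap, and that step is the whole content of the theorem.

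Connes fusion of bounded intertwiners $T_j\colon K_j\to H_j$ gives a bounded map $K_1\boxtimes K_2\to H_1\boxtimes H_2$. Your source, though, is the Hilbert space tensor product $H_0(D_1)\otimes H_0(D_2)$. Since $D_1$ and $D_2$ share no arc, ``$H_0(D_1)\boxtimes H_0(D_2)$'' has no meaning. At best one has $H_0(D)\cong H_0(D_L)\boxtimes_{\cA(\gamma)}H_0(D_R)$ for halves $D_L\supset D_1$, $D_R\supset D_2$. Moreover, the one-disc maps $Y_j$ are not $\cA(\gamma)$-equivariant at all: they only commute with algebras of arcs in $\partial D_j\cap\partial D$.

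What you would actually need is boundedness of $\xi_1\otimes\xi_2\mapsto Y_1\xi_1\boxtimes Y_2\xi_2$ on $H_0(D_1)\otimes H_0(D_2)$. The map $H\otimes K\to H\boxtimes_M K$ is itself unbounded. For complementary intervals, boundedness of $x\Omega\otimes y\Omega\mapsto xy\Omega$ with $x\in\cA(I)$, $y\in\cA(I')$ would extend $x\otimes y\mapsto xy$ to a normal representation of $\cA(I)\bar\otimes\cA(I')$ on $H_0$, forcing $\cA(I)$ to be of type I. So everything hinges on whether the smoothing in $Y_1\otimes Y_2$ compensates, and that is a nuclearity statement for which you give no mechanism. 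Indeed the theorem implies the trace class condition (Theorem~\ref{thm: rL0 trace class}) and hence the split property. An argument built only from Haag duality, fusion and one-disc operators, with no split or nuclearity input anywhere, therefore cannot close this step.

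The missing idea is to look for $Y_{\mathfrak d}$ inside a space of bounded intertwiners, so that boundedness is automatic. First reduce, using the annulus operators, to $D_1,D_2\subset\bbD$ meeting $\partial\bbD$ in intervals $J_1,J_2$. The split property for the disjoint intervals $J_1,J_2$ lets $\cA(J_1)\otimes\cA(J_2)$ act on both $H_0(D_1)\otimes H_0(D_2)$ and $H_0(\bbD)$. Vectors $x(\Omega_1\otimes\Omega_2)$, with $x$ a bounded intertwiner, are dense because $\Omega_1\otimes\Omega_2$ is separating. Lemma~\ref{lem: Omega in X Sigma} then produces one such intertwiner $S$ with $S(\Omega_1\otimes\Omega_2)=\Omega$:
\begin{itemize}
\item take a smaller star $\Sigma'$ near $0$ whose rotations stay inside $\Sigma$;
\item average $e^{itL_0}\xi$ over $t$ at the level of intertwiners, where SOT-continuity in $t$ comes from uniform bounds on the annulus operators;
\item use $V(0)=\bbC\Omega$.
\end{itemize}
Equivariance and Reeh--Schlieder then give $S(\xi_1\otimes\Omega_2)=Y_A\xi_1$ and $S(|\underline x_1\rangle_{D_1}\otimes -)=B[\underline x_1]$, with $A=\bbD\setminus\mathring D_1$ and $B=\bbD\setminus\mathring D_2$. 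This yields the characterising property.

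You have also placed the difficulty in the wrong spot. Boundary contact for $n=1$ is not an obstacle: $Y_A$ for arbitrary univalent annuli, including boundary-touching ones, is already bounded via the $\Ann_c$-representation. Your Bisognano--Wichmann argument would only reach M\"obius-type maps. And ``$\mathfrak d_*(x)\Omega_D$'' for an arc pushed into the interior is a worm insertion whose very definition uses $Y_A$. For $n=2$, boundary contact is precisely what supplies the $\cA(J_j)$-equivariance.

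Your holomorphy argument is reasonable once locally uniform norm bounds are available. The paper obtains both the bounds and holomorphy by factoring locally through fixed disjoint discs and using holomorphic families in $\Ann_c$.
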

Of particular emphasis in the above theorem is that the operators constructed are bounded maps between Hilbert spaces, where the domain is the Hilbert space tensor product.
This solves an analytic problem first formulated in \cite[\S 4.4.2]{Henriques14}, and provides a crucial missing piece for later work in which we construct full CFTs from conformal nets.

Related structures were constructed in the work of Huang and Bruegmann \cite{HuangFunctionalI,BruegmannFactorizationAlgebra}; the key difference in our result is the presence of Hilbert spaces and bounded linear maps, as opposed to more general spaces (locally convex topological vector spaces and bornological spaces, respectively).

\subsubsection*{Our main result in terms of cobordisms}

Given a multidisc embedding $\mathfrak d:D_1 \sqcup \cdots \sqcup D_n \to D$, the associated \emph{complex cobordism} is the topological space  $\cR:=D \setminus (\mathfrak d(\mathring D_1) \cup \cdots \cup \mathfrak d(\mathring D_n))$, where $\mathring D_i$ denotes the interior of $D_i$. It is a locally ringed space when equipped with the sheaf of $\bbC$-valued functions that are continuous, holomorphic in the interior, and smooth on the various boundary circles.
Given such a complex cobordism $\cR$ along with boundary parametrisations $\varphi_j:\partial \bbD \to \partial D_j$ and $\varphi_0:\partial\bbD \to \partial D$, we show in Corollary~\ref{cor: YR well defined} that the bounded operator
\[
Y_{\cR}:=U_{\varphi_0}^* \circ Y_{\mathfrak d} \circ (U_{\varphi_1} \otimes \cdots \otimes U_{\varphi_n}) : H_0^{\otimes n} \to H_0
\]
only depends, up to a non-zero scalar, on the cobordism $\cR$ and its boundary parametrizations, and not on the presentation of $\cR$ in terms of discs.
We note that this result does not follow formally from Theorem~\ref{thmalpha: genus zero Segal CFT}, i.e., it does not follow formally from the fact that we have an algebra over the operad of conformal discs.

In Corollary~\ref{cor: YR compatible with decomposition}, we furthermore show that the maps $Y_{\cR}$ are compatible with composition of cobordisms, in the sense that whenever a complex cobordism $\cR$ can be decomposed as $\cR=\cR_1\cup_{S^1} \cR_2$ then we have 
\[
Y_{\cR}=Y_{\cR_1} \circ (\mathrm{id} \otimes \cdots \otimes Y_{\cR_2} \otimes \cdots \otimes \mathrm{id}),
\]
up to scalar.
This result relates to Huang's early work \cite{HuangFunctionalII} in roughly the same way that Theorem~\ref{thmalpha: genus zero Segal CFT} relates to \cite{HuangFunctionalI}, our main contribution being an upgrade to Hilbert spaces and bounded linear maps from the earlier setup using locally convex topological vector spaces.
These operators $Y_\cR$ also subsume the bounded operators assigned to certain surfaces $\cR$ for certain conformal nets $\cA$ in \cite{GRACFT1}.

\subsubsection*{The trace class condition}

As an application of Theorem~\ref{thmalpha: genus zero Segal CFT}, we obtain the following structural result for conformal nets  (Theorem~\ref{thm: rL0 trace class} in the body of the article):

\begin{thmalpha}\label{thmalpha: trace class}
    For every conformal net $\cA$, the operator $r^{L_0}:H_0 \to H_0$ is trace class whenever $0 \le r < 1$. In particular, the eigenspaces of $L_0$ are finite-dimensional.

    Here, $L_0$ denotes the conformal Hamiltonian, i.e., the self-adjoint generator of the one-parameter group of rotations of $S^1$.
\end{thmalpha}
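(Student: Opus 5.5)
Following the abstract, the idea is to write $r^{L_0}$ as a composite of two Hilbert--Schmidt operators, each produced by Theorem~\ref{thmalpha: genus zero Segal CFT}. Take the annulus $A_r=\{r\le|z|\le1\}$ and split it as $A_r = A_{\sqrt r}\cup_{S^1} A_{\sqrt r}$. The operator for the embedding $\mathfrak d_s:\bbD\to\bbD$, $z\mapsto sz$, is $Y_{\mathfrak d_s}=s^{L_0}$. This holds because $Y_{\mathfrak d_s}$ is the unique map sending $|\underline x\rangle_{\bbD}$ to $|(\mathfrak d_s)_*\underline x\rangle_{\bbD}$, and for $|s|=1$ this is the rotation unitary fixing $\Omega$. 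Holomorphy in $s$ and the semigroup law then identify $Y_{\mathfrak d_s}$ with $s^{L_0}$ on the finite-energy vectors (compare matrix coefficients in the $L_0$-eigenbasis). So it is enough to show that $s^{L_0}$ is Hilbert--Schmidt for every $0<s<1$, since then $r^{L_0}=(\sqrt r)^{L_0}(\sqrt r)^{L_0}$ is trace class. The case $r=0$ is the projection onto $\ker L_0=\bbC\Omega$, which is rank one.

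To get Hilbert--Schmidt, I would use the \emph{two-input} operations. Being Hilbert--Schmidt for an operator $T:H_0\to H_0$ is the same as the vector $\sum_k T e_k\otimes \overline{e_k}$ lying in $H_0\otimes\overline{H_0}$. Equivalently, the bilinear form $\xi\otimes\eta\mapsto\langle T\xi, J\eta\rangle$ should extend to a bounded functional on the Hilbert tensor product $H_0\otimes H_0$, where $J$ is an antiunitary identifying $\overline{H_0}\cong H_0$.

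\begin{itemize}
\item[(1)] Take two small disjoint discs $D_1,D_2$ inside $\bbD$, for instance $D_j=\{|z\mp 1/2|\le\rho\}$.
\item[(2)] Compose $Y_{D_1\sqcup D_2\to\bbD}$ with the vacuum state $\langle\Omega,\cdot\rangle$. This gives a bounded functional $\Phi$ on $H_0(D_1)\otimes H_0(D_2)$, which is the crucial point where Theorem~\ref{thmalpha: genus zero Segal CFT} supplies boundedness on the Hilbert tensor product.
\item[(3)] Identify $\Phi$, after the parametrisations $U_{\varphi_j}$, with the pairing $\xi\otimes\eta\mapsto\langle s^{L_0}\xi,\Theta\eta\rangle$ for some $s<1$. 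Here $\Theta$ is the PCT/modular conjugation for the reflection exchanging the two discs, which is antiunitary and fixes $\Omega$.
\end{itemize}

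Step (3) reflects the fact that the sphere minus two discs is conformally an annulus of modulus determined by $\rho$, so it defines the same cobordism as $A_s$ with one boundary reversed. I would check the identification on worm insertions $|\underline x\rangle\otimes|\underline y\rangle$, which span dense subspaces by the uniqueness statement. On such vectors both sides equal vacuum expectations $\langle\Omega, x'y'\Omega\rangle$ of transported algebra elements. Here one uses that the interior of the cobordism is independent of the disc presentation (Corollary~\ref{cor: YR well defined}), together with $\Theta$ implementing the orientation-reversing reflection, which is the Bisognano--Wichmann/geometric modular action. Letting $\rho$ vary then produces all $s$ in some interval $(0,s_0)$. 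For larger $s$, note that $s^{L_0}=s_1^{L_0}(s/s_1)^{L_0}$ with $s_1$ small and the second factor bounded, and a Hilbert--Schmidt operator times a bounded one is Hilbert--Schmidt. Finite-dimensionality of the eigenspaces follows because $r^{L_0}$ is compact and acts as the nonzero scalar $r^{n}$ on the $n$-eigenspace.

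The main obstacle is step (3): matching the vacuum-paired two-disc operation with $\langle s^{L_0}\cdot,\Theta\cdot\rangle$ exactly, including the antilinear identification and the phases. The fallback is to avoid the full identification. Any $T$ with $\langle T\xi,\Theta\eta\rangle=\Phi(\xi\otimes\eta)$ is Hilbert--Schmidt, and one then shows that $s^{L_0}$ factors as such a $T$ composed with a bounded operator, using the operadic composition law and holomorphy.
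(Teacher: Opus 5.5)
\textbf{Verdict: genuine gap.} Your architecture matches the paper's:
\begin{enumerate}
\item pair the two-disc operator with the vacuum;
\item use an antiunitary on one input to turn the resulting bounded functional on $H_0\otimes H_0$ into $\eta\otimes\xi\mapsto\langle\xi,\underline A\eta\rangle$, for an annulus operator $\underline A=U_{\varphi_1}t^{L_0}U_{\varphi_2}$ with Möbius unitaries;
\item multiply two Hilbert--Schmidt operators.
\end{enumerate}
The gap is in how you control the range of the parameter $t$ (your $s$). You claim that varying $\rho$ yields only $s\in(0,s_0)$. You then try to reach larger $s$ via $s^{L_0}=s_1^{L_0}(s/s_1)^{L_0}$ ``with the second factor bounded''. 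For $s>s_1$ we have $s/s_1>1$, and $(s/s_1)^{L_0}$ is unbounded for every nontrivial net.

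That factorisation only runs downward. If $s^{L_0}$ is Hilbert--Schmidt, then so is $s'^{L_0}=s^{L_0}(s'/s)^{L_0}$ for $s'<s$, because $(s'/s)^{L_0}$ is a contraction. Hilbert--Schmidt for small $s$ says nothing about $s$ near $1$. For example, if $\dim\ker(L_0-n)$ grew like $c^n$, then $s^{L_0}$ would be Hilbert--Schmidt exactly for $s<c^{-1/2}$. So as written your argument proves $r^{L_0}$ trace class only for $r<s_0^2$. That still gives finite-dimensional eigenspaces, but not the main assertion.

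\textbf{Missing idea: the degenerate limit.} The content of the theorem lies at the degenerate end. You must show that the annulus $\mathbb{CP}^1\setminus(\mathring D_1\sqcup\mathring D_2)$, obtained by capping the outgoing boundary, becomes arbitrarily thin as the two input discs approach each other, so that $t\to 1$.
\begin{itemize}
\item In your configuration this happens as $\rho\uparrow\frac12$, so your family in fact produces all $t\in(0,1)$, not just an interval near $0$.
\item In the paper it happens as $s\uparrow|z|-r$. There $k(w)=s/(rw+\bar z)$ satisfies $|k|\le s/(|z|-r)\to1$ on $\bbD$, so $k(\bbD)$ approaches $\partial\bbD$.
\end{itemize}
Combined with the downward factorisation, this gives every $t<1$.

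\textbf{Step (3).} Corollary~\ref{cor: YR well defined} is not the tool you need there. It compares two disc presentations of the same cobordism, and says nothing about capping the outgoing boundary or reversing an input. The paper does the identification directly on worm insertions, using:
\begin{itemize}
\item $Y_{\mathfrak d}(|x\rangle\otimes-)=\underline A_g[f_*(x)]$, by Reeh--Schlieder;
\item $\Theta x\Omega=\vartheta_*(x^*)\Omega$, for $\Theta$ the modular conjugation of $\cA(I_+)$ acting on the first input only;
\item the adjoint formula for annuli with worm insertions;
\item the reflection $w\mapsto s\bar w^{-1}$, which identifies $\underline A_g^\dagger$ with $\underline A_g$.
\end{itemize}
Together these give $\langle Y_{\mathfrak d}(\Theta x\Omega\otimes\xi),\Omega\rangle=\langle\xi,\underline A_k x\Omega\rangle$.
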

Note that, in our definition of conformal net, the eigenspaces of $L_0$ are not assumed to be a priori finite-dimensional. It follows from Theorem~\ref{thmalpha: trace class} that they must always be so.

The condition that the operators $r^{L_0}$ be trace class (called the \emph{trace class condition}) and related nuclearity properties have been studied in the context of conformal nets since the early history of the theory.
One early application in these articles was the use of the trace class condition to establish the split property of conformal nets (and thereby hyperfiniteness of local algebras) \cite{BuchholzDantoniLongoNuclearMapsII, GabbianiFrohlich93}, before the split property was shown to be automatic by Morinelli-Tanimoto-Weiner \cite{MorinelliTanimotoWeiner18}.
The trace class condition was also shown to imply several other nuclearity conditions \cite{BuchholzDantoniLongoNuclearityAndThermalStates}.
Another consequence of the trace class condition is the convergence of the vacuum character 
\[
\chi_0(q) := \operatorname{tr}(q^{L_0-c/24}) = \sum_{n=0}^\infty \dim \big(\operatorname{ker}(L_0-n)\big) q^{n-c/24}
\]
in the unit disc (where $c$ is the central charge of the theory).

\subsubsection*{Connections to vertex operator algebras}

Unitary vertex operator algebras (VOAs) are a second axiomatization of unitary 2d chiral conformal field theories.
The systematic comparison between unitary VOAs and conformal nets was initiated in \cite{CKLW18}, and the authors conjectured that there is a bijective correspondence between unitary VOAs and conformal nets.
In \cite{HenriquesTenerWorms}, we established one direction of this conjectured correspondence, by showing that there is a unitary vertex operator algebra associated to every conformal net.
In this previous work, we considered only conformal nets whose associated $L_0$-eigenspaces were finite-dimensional, a condition that we now know to be automatic as a result of Theorem~\ref{thmalpha: trace class}.
The finite-dimensionality of eigenspaces was used in \cite[\S 6]{HenriquesTenerWorms} to construct vectors associated to insertions of finite-energy vectors at points in discs, analogous to the construction of vectors associated to `worm' insertions of conformal net algebra elements labelling intervals\footnote{Crucially, the construction of vectors associated to worm insertions in \cite[\S5]{HenriquesTenerWorms} does not use the finite-dimensionality of the $L_0$-eigenspaces.}.
In Appendix~\ref{sec: point insertions}, we show that the operators $Y_{\mathfrak d}$ described in Theorem~\ref{thmalpha: genus zero Segal CFT} are compatible with point insertions. 


\subsection*{Acknowledgements}

The second author was supported by ARC Discovery Project DP200100067. For the purpose of Open Access, the authors have applied a CC BY public copyright licence to any Author Accepted Manuscript (AAM) version arising from this submission.

\section{Background}

\subsection{The semigroup of annuli}\label{sec: annuli}

The \emph{Virasoro Fr\'echet Lie algebra}
\begin{align*}
&\,\,\,\,\Vir\,\,\, :=\,\, \textstyle \bigg\{\sum_{n\in \bbZ} a_n L_n+ kC \,\bigg|\, 
\begin{aligned}
&a_n, k \in \bbC\\[-1mm]
&\text{$a_n$ is rapidly decreasing as $|n|\to\infty$}
\end{aligned}
\bigg\}
\\
&[L_m,L_n] = (m-n)L_{n+m} + \frac{C}{12} (m^3-m) \delta_{n+m,0}
\end{align*}
is the universal central extension of the complexification of the Lie algebra of $\Diff(S^1)$, the group of orientation-preserving diffeomorphisms of the circle $S^1$. It contains the \emph{algebraic Virasoro algebra} 
$\{ \sum a_n L_n+ kC \,|\, a_n$ eventually zero$\}$
as a dense subalgebra.
Using the energy bounds of Goodman and Wallach \cite{GoWa85}, one can show that every positive energy unitary representation $(V,\pi)$ of the algebraic Virasoro algebra extends to a representation of the Virasoro Fr\'echet Lie algebra by unbounded operators on the Hilbert space completion $H$ of $V$ (\cite[Prop. 4.16]{HenriquesTenerWorms}). 
The operator $X \in \Vir$ acts on $H$ as the closure of $\pi(X)$, regarded as an unbounded operator on $H$ with domain $V$. By abuse of notation, we denote this closure by $\pi(X)$ as well.

The subalgebra
$\Vir_\bbR := \mathrm{Span}_\bbR(\{L_n - L_{-n}, iL_n + iL_{-n}\}, iC)\subset \Vir$ is the Lie algebra of a certain infinite-dimensional group known as the Bott-Virasoro group, which is a central extension of $\Diff(S^1)$. 
But the Lie algebra $\Vir$ is not itself the Lie algebra of any Fr\'echet Lie group (cf. \cite{SegalDef,Neretin90}).
There is however a semigroup that can reasonably be said to integrate the Virasoro Fr\'echet Lie algebra. This semigroup is a central extension of the 
\emph{semigroup of annuli}, described in Definition~\ref{def: def annuli} below.

Given a smooth Jordan curve $\gamma:S^1\to \bbC$ with winding number $1$, let $\mathrm{Int}(\gamma)\subset \bbC$ denote the bounded component of $\bbC\setminus \gamma(S^1)$.

\begin{defn}\label{def: def annuli}
The semigroup of annuli, denoted $\Ann$, is the set of equivalence classes of pairs of smooth Jordan curves $\varphi_{in},\varphi_{out}:S^1\to \bbC$ satisfying $\mathrm{Int}(\varphi_{in})\subseteq \mathrm{Int}(\varphi_{out})$. Two pairs $(\varphi_{in},\varphi_{out})$ and $(\varphi'_{in},\varphi'_{out})$ are declared equivalent if there exists a $C^\infty$ diffeomorphism $f:\bbC\to \bbC$ such that $f\circ \varphi_{in/out}=\varphi'_{in/out}$ and $f|_{\mathrm{Int}(\varphi_{out}) \setminus \overline{\mathrm{Int}(\varphi_{in})}}$ is holomorphic.
\end{defn}

Given a pair of smooth Jordan curves $\varphi_{in/out}$ as above, we identify it with the annulus $A = \overline{\mathrm{Int}(\varphi_{out})} \setminus \mathrm{Int}(\varphi_{in})$, and we refer to the image of $\varphi_{in}$ (resp. $\varphi_{out}$) as the incoming (resp. outgoing) boundary of $A$. We write $\partial_{in}A$ for the incoming boundary of $A$, and $\partial_{out}A$ for the outgoing boundary.
In \cite[\S3]{HenriquesTener24ax}, we showed that $\Ann$ is a semigroup under the operation $(A_1,A_2) \mapsto A_1 \cup A_2$ of \emph{conformal welding}, which identifies the outgoing boundary of $A_2$ with the incoming boundary of $A_1$ along their respective parametrizations.

There is a natural embedding $\Diff(S^1) \hookrightarrow \Ann$ which sends
a diffeomorphism $\varphi \in \Diff(S^1)$ to the completely thin annulus $S^1$ with outgoing boundary parametrized by the identity map, and incoming boundary parametrized by $\varphi$.

The semigroup of annuli is equipped with an involution $\dagger:\Ann \to \Ann$ which sends an annulus $A$ to the same annulus but equipped with the opposite complex structure.
The boundary parametrisations remain the same, but the incoming and outgoing boundaries are interchanged. One readily checks that
this involution satisfies $(A \cup B)^\dagger = B^\dagger \cup A^\dagger$.

In \cite{HenriquesTener24ax}, we constructed a central extension
\begin{equation}\label{eq: anntc}
0\to \bbC^\times \times \bbZ \to \tAnn_c \to \Ann \to 0
\end{equation}
of the semigroup of annuli, depending on a parameter $c\in\bbR$ called \emph{central charge}.
This semigroup carries an involution $\dagger:\tAnn_c\to \tAnn_c$ covering the involution $\dagger$ on $\Ann$.
In \cite{HenriquesTenerIntegratingax}, we showed that every positive energy unitary representation $(V,\pi)$ of $\Vir$ with central charge $c$ (one where $C$ acts by the scalar $c$) integrates to a holomorphic $*$-representation of $\tAnn_c$ (denoted again $\pi$) by bounded operators on the Hilbert space completion of $V$.
The central extension $\tAnn_c$ of $\Ann$ 
restricts to a central extension of $\Diff(S^1) \subset \Ann$ by $U(1)\times \bbZ$ which we denote by $\tDiff_c(S^1)$, and a $*$-representation of $\tAnn_c$ restricts to a unitary representation of $\tDiff_c(S^1)$.
Given an annulus $A \in \Ann$ and a lift $\underline A \in \tAnn_c$, we shall typically denote the corresponding Hilbert space operator again by $\underline A$.

The semigroup $\Ann$ admits an important sub-semigroup which behaves like the Borel subgroup of an algebraic group:

\begin{defn}\label{def: univ}
The \emph{semigroup of univalent maps} $\Univ$ is the set of holomorphic embeddings $f : \bbD \to \bbD$ (with derivative everywhere non-zero, including on $\partial \bbD$).
The inclusion $\Univ \hookrightarrow \Ann$ sends a univalent map $f$ to the annulus
$A_f := \bbD \setminus f(\mathring{\bbD})$, where the boundary parametrisations of $A_f$ are provided by $\varphi_{in} = f$, and $\varphi_{out} = \id$, respectively.
\end{defn}

Let $\tUniv$ be the universal cover of $\Univ$.
As explained in \cite[\S4.2]{HenriquesTenerWorms}, the inclusion $\Univ \hookrightarrow \Ann$ uniquely lifts to an inclusion $\tUniv \hookrightarrow \tAnn_c$.
The image of the central $\bbZ\subset \tUniv$ (generated by $2\pi$-rotation) under that embedding singles out a canonical subgroup isomorphic to $\bbZ$ inside the center $\bbC^\times\times \bbZ$ of $\tAnn_c$.
We shall denote by $\Ann_c$ the quotient of $\tAnn_c$ by this subgroup. It fits into a central extension:
\begin{equation}\label{eq: 1st appearance of Annc}
0\to \bbC^\times \to \Ann_c \to \Ann \to 0.
\end{equation}
Similarly, the quotient of $\tDiff_c(S^1)\subset \tAnn_c$ by $\bbZ$ is denoted $\Diff_c(S^1)$; it fits into a central extension:
\[
0\to U(1) \to \Diff_c(S^1) \to \Diff(S^1) \to 0.
\]

In any given representation,
the generator of the central $\bbZ\subset \tUniv\subset \tAnn_c$ acts by $e^{2\pi iL_0}$.
So if $V$ is a Virasoro representation in which $L_0$ acts with integer spectrum, then the action  of $\tAnn_c$ on $H_V$ descends to an action of $\Ann_c$.
This will be the case for all representations considered in this article.
Using the inclusion $\tUniv \hookrightarrow \tAnn_c$, univalent annuli $A_f \in \Univ$ have a canonical lift to $\Ann_c$, which we again denote $A_f$.

\subsection{Conformal nets}\label{sec: conformal nets}

As before, let $\Diff(S^1)$ denote the group of orientation-preserving diffeomorphisms of $S^1$, and let $\Mob:=\mathit{PSU}(1,1)$ be its subgroup of M\"obius transformations.

\begin{defn}\label{def: definition of conformal net}
A \emph{conformal net} $\cA$ consists of:
\begin{itemize}
\item
A Hilbert space $H_0$ called the \emph{vacuum sector}, with a unit vector $\Omega \in H_0$ called the \emph{vacuum vector}.
\item An assignment of a von Neumann algebra $\cA(I)\subset B(H_0)$ to every interval\footnote{An \emph{interval} $I \subset S^1$ is a proper, nonempty, connected, closed subset of $S^1$} $I \subset S^1$.
\item
A strongly continuous representation $\Mob\to U(H_0):\varphi\mapsto U_\varphi$, along with a strongly continuous projective representation $\Diff(S^1)\to PU(H_0):\varphi\mapsto [U_\varphi]$ extending it.
\end{itemize}
These are required to satisfy:
\begin{itemize}
\item (Isotony) Whenever $I \subset J$, we have $\cA(I) \subset \cA(J)$.
\item (Locality) Whenever $\mathring I \cap \mathring J = \emptyset$, we have $[\cA(I),\cA(J)]=0$.
\item (Diffeomorphism covariance) 
The map $I \mapsto \cA(I)$ is $\Diff(S^1)$-equivariant, meaning $U_{\varphi}\cA(I)U_{\varphi}^* = \cA(\varphi(I))$ for all $\varphi \in \Diff(S^1)$. Moreover, if $\varphi\in \Diff(S^1)$ fixes $I$ pointwise then $\varphi$ acts as the identity on $\cA(I)$.
\item (Positivity of energy) The infinitesimal generator $L_0$ of the rotation subgroup of $\Mob$ has nonnegative spectrum.
\item (Vacuum) The vacuum vector $\Omega$ is invariant under $\Mob$, and it is cyclic for the joint actions of the algebras $\cA(I)$.
\item (Irreducibility) The vacuum vector is the unique vector fixed by $\Mob$, up to scalar.
\end{itemize}
\end{defn}

We do not assume, a priori, that the eigenspaces $V(n):=\ker(L_0-n)$ are finite-dimensional, although the irreducibility axiom implies $V(0)=\bbC \Omega$. 
We will show in Theorem~\ref{thm: rL0 trace class} that the finite-dimensionality of the $L_0$-eigenspaces follows automatically from the other axioms.

A key structural property of conformal nets is the \emph{split property}: if $I$ and $J$ are disjoint (closed) intervals of $S^1$, then the map $\cA(I) \odot \cA(J) \to B(H_0)$ given by $x \otimes y \mapsto xy$ extends to an isomorphism of von Neumann algebras $\cA(I) \otimes \cA(J) \cong \cA(I) \vee \cA(J)$. Here $\cA(I) \odot \cA(J)$ is the algebraic tensor product, $\cA(I) \otimes \cA(J)$ is the spatial tensor product (i.e. the von Neumann subalgebra of $B(H_0 \otimes H_0)$ generated by $\cA(I) \odot \cA(J)$), and $\cA(I) \vee \cA(J) \subset B(H_0)$ is the von Neumann subalgebra generated by $\cA(I)$ and $\cA(J)$.
This property was shown to hold for all conformal nets in \cite{MorinelliTanimotoWeiner18}, and going forward we will often suppress this isomorphism and write $\cA(I) \otimes \cA(J)$ instead of $\cA(I) \vee \cA(J)$.

We also note the \emph{Reeh-Schlieder theorem} for $\cA$, which asserts that the vacuum vector $\Omega$ is cyclic and separating for each von Neumann algebra $\cA(I)$.

Given a conformal net $\cA$, the projective representation of $\Diff(S^1)$ on $H_0$ differentiates to a positive energy unitary representation of the Virasoro algebra, with some central charge $c$, on the finite energy vectors $V = \bigoplus_{n\ge 0} V(n)$  (see \cite[Prop. 4.16]{HenriquesTenerWorms} or \cite[\S3.2]{CKLW18}). 
Thus, as described in Section~\ref{sec: annuli} (see also \cite{HenriquesTenerIntegratingax} and \cite[\S4.3]{HenriquesTenerWorms}), there exists a holomorphic representation of $\Ann_c$ on $H_0$ by bounded operators which lifts and extends the original representation of $\Diff(S^1)$.

Given a conformal net $\cA$, there is a corresponding \emph{coordinate-free conformal net} \cite[Prop. 4.3]{BartelsDouglasHenriques15}\footnote{The conformal nets considered in \cite{BartelsDouglasHenriques15} were assumed to satisfy the strong additivity property. That assumption, however, is not needed for any the constructions relevant to the present paper.}, denoted by the same letter. It assigns a von Neumann algebra $\cA(I)$ to every abstract interval $I$ (no longer a subset of $S^1$), and a Hilbert space $H_0(D)$ to every abstract disc $D$.
The elements of $\cA(I)$  are equivalence classes of pairs $(f,a)$, where $f:I\to S^1$ is an embedding, and $a\in \cA(f(I))$ is an element, where two pairs $(f,a)$ and $(f',a')$ are declared equivalent if there exists a diffeomorphism $\varphi\in \Diff(S^1)$ such that $\varphi\circ f=f'$ and $U_\varphi a U_\varphi^*=a'$.
Given a diffeomorphism $\varphi:I_1 \to I_2$, we have an associated isomorphism $\varphi_*:\cA(I_1) \to \cA(I_2)$ given by $[(f,a)]\mapsto [(f\circ \varphi^{-1},a)]$.
Moreover, if $I$ is an interval in $S^1$, then the two possible meanings of $\cA(I)$ are canonically identified via the bijection $a \leftrightarrow [(\id,a)]$.

Similarly, given an abstract disc $D$, the coordinate-free conformal net assigns to it the Hilbert space $H_0(D)$ whose elements are equivalence classes $(f,\xi)$ where $f:\bbD\to D$ is a biholomorphic map and $\xi \in H_0$, where two pairs $(f,\xi)$ and $(f',\xi')$ are equivalent if $U_{{f'}^{-1}\circ f}(\xi)=\xi'$. For $I\subset \partial D$, the algebra $\cA(I)$ acts on $H_0(D)$ by $x {\cdot} [(f,\xi)]:=[(f,f^{-1}_*(x)\xi)]$.
Note that there is a canonical identification $H_0 \cong H_0(\bbD)$ given by $\xi \mapsto (\mathrm{id},\xi)$, compatible with the actions of algebras $\cA(I)$.

The assignment $I\mapsto \cA(I)$ is not only functorial with respect to diffeomorphisms, but also with respect to embeddings (both orientation-preserving and orientation-reversing).
Given an orientation-preserving embedding $\varphi:I_1\to I_2$, we have an associated $*$-algebra homomorphism $\cA(I_1) \to \cA(I_2)$.
And given an orientation-reversing embedding $\varphi:I_1\to I_2$, we have an associated $\bbC$-linear $*$-algebra anti-homomorphism\footnote{A different convention would assign to an orientation-reversing embedding $\varphi:I_1\to I_2$ an anti-linear algebra homomorphism $\cA(I_1) \to \cA(I_2)$. The two conventions differ by an application of $*$.} $\varphi_*:\cA(I_1) \to \cA(I_2)$. Similarly, 
given a holomorphic isomorphism $D_1\to D_2$ between two discs we have a corresponding unitary $H_0(D_1) \to H_0(D_2)$, and
given an antiholomorphic isomorphism $D_1\to D_2$ we have an antiunitary $H_0(D_1) \to H_0(D_2)$.

If $D_1$ and $D_2$ are discs and $\varphi:\partial D_1 \to \partial D_2$ is an orientation-preserving diffeomorphism (not assumed to extend to a holomorphic map $D_1 \to D_2$), then there also exists a unique up to phase unitary 
\begin{equation}\label{eqn: unitary implementing diffeomorphism}
    U_\varphi:H_0(D_1) \to H_0(D_2)
\end{equation}
which \emph{implements} $\varphi$, which is to say that for every interval $I \subset \partial D_1$ and every algebra element $a \in \cA(I)$ we have $U_\varphi a U_\varphi^* = \varphi_*(a)$.
When $D_1 = D_2 = \bbD$, implementing unitaries are given by the projective representation $\Diff(S^1) \to PU(H_0)$.
And when $\varphi$ extends to a biholomorphic isomorphism $\varphi:D_1 \to D_2$, then the phase ambiguity can be lifted and we recover the unitary mentioned in the previous paragraph.

Let $A$ be an annulus without boundary parametrizations, written as $A= D_{out}\setminus \mathring D_{in}$ for some discs $D_{in}\subset D_{out}$. In \cite[Section~4.4]{HenriquesTenerWorms}, we introduced a bounded operator
\begin{equation}\label{eqY}
Y_A:H_0(D_{in})\to H_0(D_{out})
\end{equation}
defined in the following way.
Choose biholomorphic maps $f_{in} : \bbD \to D_{in}$ and $f_{out} : \bbD \to D_{out}$.
Observe that $f_{out}^{-1}\circ f_{in}\in \Univ$, and let
\begin{equation}\label{eqn: def YA}
Y_A\big([(f_{in}, \xi)]\big) := \big[\big(f_{out}, \pi(f_{out}^{-1}\circ f_{in}) \xi\big)\big],
\end{equation}
where $\pi:\Univ\subset \Ann_c\to B(H_0)$ is the action of $\Univ$ discussed in Section~\ref{sec: annuli}.
We recall that, by \cite[Lem 4.15]{HenriquesTenerWorms}, the map $Y_A$ is injective with dense image.

We finish by noting that
if $f:\bbD \to \bbD$ is a univalent map and $A_f = \bbD \setminus f(\mathring \bbD)$, then the operators \eqref{eqn: unitary implementing diffeomorphism} and \eqref{eqY} are compatible in the sense that $Y_{A_f} \circ U_f = \pi(f)$.

\subsection{Worm insertions}

In this section we will summarise results from \cite[\S5]{HenriquesTenerWorms} in which we construct vectors (resp. operators) associated to discs (resp. annuli) decorated by intervals $I_1\ldots I_n$ which are further labelled by elements $x_1\ldots x_n$ of the conformal net local algebras (`worm insertions').
\begin{equation}
\label{eq: skdjnbksdfksb}
\left[\,\,\begin{tikzpicture}[scale=0.4,baseline]
  \filldraw[fill=red!10!blue!20!gray!30!white, draw=black, thick] (0,0) circle (3cm);
  \draw[thick] (-1.7,1) to[out=30,in=150] (-1.2,1.1) to[out=-30,in=210] (-0.8,1);
  \node at (-1.2,1.4) {${\scriptstyle x_1}$};
  \draw[thick] (0.5,1.7) to[out=60,in=120] (0.9,1.7) to[out=-60,in=240] (1.3,1.6);
  \node at (0.9,2) {${\scriptstyle x_2}$};
  \draw[thick,rotate around={20:(0,0)}] (0,0.2) to[out=60,in=120] (0.4,0.2) to[out=-60,in=240] (0.8,0.1);
  \node at (0.4,0.6) {${\scriptstyle x_3}$};
  \draw[thick] (-1.5,-1.2) to[out=10,in=170] (-1,-1.3) to[out=-10,in=190] (-0.5,-1.2);
  \node at (-1,-0.9) {${\scriptstyle x_n}$};
  \node[scale=.5] at (-.1,-.5) {$\cdots$};
  \node[scale=1.05] at (1.8,-1) {$D$};
\end{tikzpicture}
\quad\begin{matrix}
\text{with intervals}
\\
I_1,\ldots, I_n \subset D\\
\text{and}\\\text{algebra elements}
\\
x_i \in \cA(I_i)
\end{matrix}
\,\right]
\quad\, \mapsto \,\quad |x_1 \ldots x_n \rangle \in H_0(D).
\end{equation}
We note that while the article \cite{HenriquesTenerWorms} takes finite-dimensionality of $L_0$-eigenspaces as a blanket assumption for conformal nets, the constructions in Section 5 of that article do not require that assumption, which is not used until later sections of the article.

Let $D$ be a disc.
An embedded (oriented) interval $I \subset D$ is called \emph{extendable} if there exists a larger interval $I_+ \subset D$ containing $I$ in its interior, and if the orientations of $I$ and $\partial D$ agree whenever the two intersect.
Given a family $I_1, \ldots, I_n \subset D$ of disjoint, extendable intervals, and algebra elements $x_j \in \cA(I_j)$, we constructed in \cite[Eqn. (42)]{HenriquesTenerWorms} a vector
\[
| x_1 \cdots x_n \rangle_D \in H_0(D).
\]
When necessary for brevity, we will sometimes denote a collection of disjoint extendable intervals by $\underline{I} := (I_j)_{j=1}^n$, and corresponding algebra elements by 
\[
\underline{x} =(x_1, \ldots, x_n) \in \cA(\underline{I}) := \cA(I_1) \times \cdots \times \cA(I_n).
\]
Similarly, if $\underline{I}$ is a collection of disjoint extendable intervals in a disc $D$, and $\underline{x} \in \cA(\underline{I})$ is a choice of labels by algebra elements, then we sometimes write
\[
|\underline{x}\rangle_D := | x_1 \cdots x_n \rangle_D.
\]
More generally, if $\underline{I}_1, \ldots, \underline{I}_m$ are collections of extendable intervals in $D$ which are collectively pairwise disjoint, and  $\underline{x}_1, \ldots, \underline{x}_m$ are a family of labels by algebra elements $\underline{x}_j \in \cA(\underline{I}_j)$, then we give
\[
| \underline{x}_1 \cdots \underline{x}_m \rangle_D
\]
the obvious meaning of performing all of the insertions from all of the collections $\underline{x}_j$.

If $D'$ is another disc, $\varphi:D \to D'$ is a biholomorphic map, and $U_\varphi:H_0(D) \to H_0(D')$ is the associated unitary operator, then we have
\begin{equation}\label{eqn: biholomorphic map acting on disc insertions}
    U_\varphi| x_1 \cdots x_n \rangle_{D} = | \varphi_*(x_1) \cdots \varphi_*(x_n) \rangle_{D'}.
\end{equation}

Similarly, given an annulus $A = D_{out} \setminus \mathring D_{in}$, a family $(I_j)_{j=1}^n$ of disjoint extendable intervals in $A$, and algebra elements $x_j \in \cA(I_j)$ we constructed in \cite[Eqn. (44)]{HenriquesTenerWorms} a bounded operator
\begin{equation}\label{eqn: annulus with worm insertions}
A[x_1 \cdots x_n]:H_0(D_{in}) \to H_0(D_{out}).
\end{equation}
As with discs, we will sometimes denote a collection of intervals by $\underline I := (I_1, \ldots, I_n)$, and given algebra elements $\underline x \in \cA(\underline I)$ we will write the operator \eqref{eqn: annulus with worm insertions} as $A[\underline x]$.
In the absence of worm insertions, the operator \eqref{eqn: annulus with worm insertions} reduces to \eqref{eqn: def YA}.
If $J_1, \ldots, J_m$ is a family of extendable intervals in $D_{in}$, labelled by algebra elements $y_j \in \cA(J_j)$, then we have by \cite[Eqn. (48)]{HenriquesTenerWorms}
\begin{equation}\label{eqn: annuli with worms acting on disc with worms}
A[x_1 \cdots x_n] | y_1 \cdots y_m \rangle_{D_{in}} = | x_1 \cdots x_n \, y_1 \cdots y_m \rangle_{D_{out}}.
\end{equation}
Without the worm insertions $x_j$, this reduces to:
\begin{equation}\label{eqn: annuli without worms acting on discs with worms}
Y_A | y_1 \cdots y_m \rangle_{D_{in}} = | y_1 \cdots y_m \rangle_{D_{out}}.
\end{equation}
By \cite[Lem. 5.5]{HenriquesTenerWorms}, whenever $I$ is an interval contained in $\partial D_{in} \cap \partial D_{out}$, the operator $A[x_1 \cdots x_n]$ is equivariant for the actions of $\cA(I)$ on $H_0(D_{in})$ and $H_0(D_{out})$.

Similarly, if $A \in \Ann$ is an annulus, $\underline A \in \Ann_c$ is a lift of $A$, $I_1, \ldots, I_n \subset A$ are disjoint extendable intervals, and $x_j \in \cA(I_j)$, then we constructed in \cite[Eqn. (45)]{HenriquesTenerWorms} an operator
\begin{equation}\label{eqn: parametrised annulus with worm insertions}
\underline{A}[x_1 \cdots x_n]:H_0 \to H_0.
\end{equation}
If $A$ is written $A = D_{out} \setminus \mathring D_{in}$, with boundary parametrisations $\varphi_{in/out}: \partial \bbD \to \partial D_{in/out}$, then by \cite[Lem. 5.3]{HenriquesTenerWorms} we have
\begin{equation}\label{eqn: parametrised vs unparametrised annuli}
U_{\varphi_{out}} \underline A[x_1 \cdots x_n] U_{\varphi_{in}}^* = c \cdot A[x_1 \cdots x_n]
\end{equation}
where $U_{\varphi_{in/out}}:H_0 \to H_0(D_{in/out})$ are any unitaries implementing $\varphi_{in/out}$, and $c \in \bbC^\times$ is a scalar that does not depend on the choice of worm insertions $x_j$.

Suppose now that $A' \in \Ann$ is another annulus written as $A' = D_{out}' \setminus \mathring D_{in}'$ with boundary parametrisations $\varphi_{in/out}^\prime: \partial \bbD \to \partial D_{in/out}'$, and
suppose that $f:A \to A'$ is an isomorphism, meaning that $f$ is a homeomorphism, $f|_{\mathring A}$ is holomorphic, and $\varphi'_{in/out}=f\circ \varphi_{in/out}$.
As described in \cite[\S2]{HenriquesTener24ax}, the annuli $A$ and $A'$ lie in the same equivalence class in $\Ann$, and thus any pair of lifts $\underline A, \underline A' \in \Ann_c$ are scalar multiples of one another.
It follows that for any collection of worm insertions $x_j$ in $A$, the operators $\underline A[x_1 \cdots x_n]$ and $\underline A' [f_*(x_1) \cdots f_*(x_n)]$ on $H_0$ differ by the same scalar multiple.
In light of \eqref{eqn: parametrised vs unparametrised annuli}, we therefore have
\begin{equation}\label{eqn: comparison of unparametrised annuli under iso}
U_{f_{out}}A[x_1 \cdots x_n]U_{f_{in}}^* = c \cdot  A'[f_*(x_1) \cdots f_*(x_n)] ,
\end{equation}
where $f_{in/out} = f|_{\partial D_{in/out}}$ regarded as a diffeomorphism $\partial D_{in/out} \to \partial D_{in/out}'$, and  $c \in \bbC^\times$ is a scalar that does not depend on the choice of worm insertions.

\section{Genus zero functorial CFT from conformal nets}

Throughout this section, we fix a conformal net $\cA$ with vacuum Hilbert space $H_0$, and consider its coordinate-free extension as described in Section~\ref{sec: conformal nets}.
The goal of the section is to construct the genus zero functorial CFT corresponding to the vacuum sector of the conformal net $\cA$.
Recall (Definition~\ref{defn: operad of conformal discs}) that $\mathfrak D(n)$ denotes the collection of multidisc embeddings $\mathfrak d:D_1 \sqcup \cdots \sqcup D_n \to D$, and $\mathfrak D = \bigsqcup_{n=0}^\infty \mathfrak D(n)$ denotes the operad of conformal discs.
We will construct an algebra for this operad by showing that for each multidisc embedding $\mathfrak d:D_1 \sqcup \cdots \sqcup D_n \to D$, there is a bounded operator 
\[
Y_{\mathfrak d}:H_0(D_1) \otimes \cdots \otimes H_0(D_n) \to H_0(D)
\]
with the following property: for each $j=1, \ldots, n$ let $\underline I_j$ be a collection of disjoint extendable intervals in $D_j$, and let $\underline x_j \in \cA(\underline I_j)$,
then we have 
\begin{equation}\label{eqn: vacuum cft characterising property section intro}
Y_{\mathfrak d}(|\underline x_1\rangle_{D_1} \otimes \cdots \otimes |\underline x_n\rangle_{D_n}) = | \mathfrak d_*(\underline x_1) \cdots \mathfrak d_*(\underline x_n) \rangle_D.
\end{equation}
In the case $n=0$, this should be interpreted to say that the map $Y_{D}:\bbC \to H_0(D)$ corresponding to the empty embedding $\emptyset \to D$ assigns to the number $1 \in \bbC$ the vector $\Omega_D = | \, \rangle_D$.
Such maps, if they exist, evidently satisfy the necessary composition law to be an algebra for the operad of conformal discs, so the primary content of the section is to demonstrate the existence of \emph{bounded} operators $Y_{\mathfrak d}$ satisfying \eqref{eqn: vacuum cft characterising property section intro}.

We begin with a certain technical construction of bounded operators (Lemmas~\ref{lem: XSigma doesnt depend on presentation} and \ref{lem: Omega in X Sigma}) corresponding to inclusions of discs.
We will show later (in the proof of the main result Theorem~\ref{thm: Yd exists}) that these lemmas furnish the maps $Y_{\mathfrak d}$ when $\mathfrak d \in \mathfrak D(2)$.

Recall that, given an inclusion $A\subset B$ of topological spaces, the \emph{relative interior} of $A$ inside $B$, denoted $\mathrm{Rel\text{-}Int}(A)$, is the interior of $A$ viewed as a subset of $B$. 
\begin{defn}\label{def: star}
A \emph{four-pointed star}, or \emph{diamond}, (also called \emph{`ninja-star'} in \cite{Henriques14}) is a closed  contractible subspace of $\bbC$ of the form
\begin{equation}\label{eq: def diamond}
\Sigma \;=\; D_3 \setminus \relint(D_1 \sqcup D_2)
\end{equation}
where $D_1,D_2\subset \bbC$ are disjoint discs contained in some bigger disc $D_3$ and sharing part of its boundary, and the relative interior is taken with respect to $D_3$.
(The condition that \eqref{eq: def diamond} be contractible implies that $\partial D_1\cap \partial D_3$ and $\partial D_2\cap \partial D_3$ are intervals.)

The subsets $\partial D_1 \cap \Sigma$ and $\partial D_2 \cap \Sigma$ are called the \emph{incoming intervals} of $\Sigma$, and the connected components of $\partial D_3 \cap \Sigma$ are called the \emph{outgoing intervals} of $\Sigma$.
The decomposition of $\partial \Sigma$ into incoming and outgoing intervals is part of the data of the four-pointed star.
Given a four-pointed star $\Sigma$, a choice of discs $D_1$, $D_2$, and $D_3$ satisfying \eqref{eq: def diamond} is called a \emph{presentation} of $\Sigma$.
\end{defn}

The following is an example of a four-pointed star:
\[
\def\h{.25}\def\l{3.2}
\tikz[scale=\h, baseline=-1.88]{
\fill[gray!25] ($(90:.014/\h) +(0:3)$) to [in=-90, out=180] ($(90+90:-.014/\h) +(90:3)$)
-- ($(90+90:.014/\h) +(90:3)$) to [in=-90+90, out=180+90] ($(90+180:-.014/\h) +(180:3)$)
-- ($(90+180:.014/\h) +(180:3)$) to [in=-90+180, out=180+180] ($(90+270:-.014/\h) +(270:3)$)
-- ($(90+270:.014/\h) +(270:3)$) to [in=-90+270, out=180+270] ($(90+360:-.014/\h) +(360:3)$) -- cycle;
\draw[thick, red] (90:.014/\h) +(0:\l) -- +(0:3) to [in=-90, out=180]
($(90+90:-.014/\h) +(90:3)$) -- ($(90+90:-.014/\h) +(90:\l)$);
\draw[rotate=90, thick, blue] (90:.014/\h) +(0:\l) -- +(0:3) to [in=-90, out=180]
($(90+90:-.014/\h) +(90:3)$) -- ($(90+90:-.014/\h) +(90:\l)$);
\draw[rotate=180, thick, red] (90:.014/\h) +(0:\l) -- +(0:3) to [in=-90, out=180]
($(90+90:-.014/\h) +(90:3)$) -- ($(90+90:-.014/\h) +(90:\l)$);
\draw[rotate=-90, thick, blue] (90:.014/\h) +(0:\l) -- +(0:3) to [in=-90, out=180]
($(90+90:-.014/\h) +(90:3)$) -- ($(90+90:-.014/\h) +(90:\l)$);
}
\,\,\,\,\,=\,
\tikz[scale=\h, baseline=-1.88]{
\filldraw[fill=gray!25]
(0,3.2) ..controls +(0,3) and +(3,0).. (3.2,0)
..controls +(-1.9,0) and +(0,1.9).. (0,-3.2)
..controls +(0,-3) and +(-3,0).. (-3.2,0)
..controls +(1.9,0) and +(0,-1.9).. (0,3.2);
}
\setminus
\tikz[scale=\h, baseline=-1.88]{
\fill[gray!25]
(0,3.2) ..controls +(0,3) and +(3,0).. (3.2,0)
..controls +(-1.9,0) and +(0,-1.9).. (0,3.2);
\fill[gray!25]
(0,-3.2) ..controls +(0,-3) and +(-3,0).. (-3.2,0)
..controls +(1.9,0) and +(0,1.9).. (0,-3.2);
\draw
(0,3.2) ..controls +(0,3) and +(3,0).. (3.2,0);
\draw[dotted]
(3.2,0) ..controls +(-1.9,0) and +(0,-1.9).. (0,3.2);
\draw
(0,-3.2) ..controls +(0,-3) and +(-3,0).. (-3.2,0);
\draw[dotted]
(-3.2,0) ..controls +(1.9,0) and +(0,1.9).. (0,-3.2);
}
\]
The incoming intervals are drawn in red, and outgoing ones are drawn in blue.

Given a four-pointed star $\Sigma$
written as \eqref{eq: def diamond}
with $D_1$, $D_2$, $D_3$ all contained in $\bbD$,
let
\[
A:=\bbD \setminus \mathring D_3,
\qquad
J_1:= \partial D_1 \cap \partial D_3,\qquad \text{and}\qquad
J_2:= \partial D_2 \cap \partial D_3.
\]
By the split property of the conformal net, both $H_0(D_1) \otimes H_0(D_2)$ and $H_0(D_3)$ carry representations of the spatial tensor product $\cA(J_1) \otimes \cA(J_2)$.
\begin{defn}\label{def: def of X(Sigma)}
We define the subspace
\begin{equation}\label{eq: X(Sigma) c H_0}
X(\Sigma)\subset H_0
\end{equation}
to be the image under $Y_A:H_0(D_3)\to H_0$ of
\[
\Hom_{\cA(J_1)\otimes \cA(J_2)}\Big(H_0(D_1)\otimes H_0(D_2), H_0(D_3)\Big) (\Omega_1\otimes \Omega_2)
\subset H_0(D_3)
\]
(where, $\Omega_1\in H_0(D_1)$ and $\Omega_2\in H_0(D_2)$ are the respective vacuum vectors).
We equip $X(\Sigma)$ with the topology of pointwise convergence of operators, i.e., the subspace topology of the strong operator topology on 
$\Hom(H_0(D_1)\otimes H_0(D_2), H_0(D_3))$.
\end{defn}

\begin{rem}
Given a four-pointed star $\Sigma$, let us denote by $\Sigma^{\mathrm{rev}}$ the four-pointed star with the same underlying space and the same orientation, but where the incoming and outgoing boundaries are switched.
If $\Sigma\subset\mathring \bbD$,
we thus get two subspaces $X(\Sigma)$ and $X(\Sigma^{\mathrm{rev}})$ of $H_0$. We expect that when $\cA$ is not  rational (i.e. when $\cA$ has infinite $\mu$-index in the sense of \cite{KaLoMu01}), we have $X(\Sigma) \ne X(\Sigma^{\mathrm{rev}})$.
\end{rem}

\begin{lem}\label{lem: XSigma doesnt depend on presentation}
    The subspace $X(\Sigma) \subset H_0(\bbD)$ only depends on $\Sigma$ and not on its presentation, i.e., not on the choices of discs $D_1$ and $D_2$ (the third disc, $D_3$, can be recovered as $D_3=\Sigma \cup D_1\cup D_2$).
    Similarly, the topology on $X(\Sigma)$ described above only depends on $\Sigma$.
\end{lem}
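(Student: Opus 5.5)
The plan is to show that any two presentations of $\Sigma$ give the same subspace, by reducing to \emph{nested} presentations. Given presentations $(D_1,D_2,D_3)$ and $(D_1',D_2',D_3')$ of $\Sigma$, I will choose a third presentation $(D_1'',D_2'',D_3'')$ with $D_i''\subset D_i\cap D_i'$ for $i=1,2$. For this, take smooth discs inside $D_i\cap D_i'$ whose boundary contains the incoming interval $\partial D_i\cap\Sigma$ and which meet the outer boundary in an interval. Comparing each presentation with the third one, it suffices to treat the case $D_1'\subset D_1$, $D_2'\subset D_2$. Then $D_3'=\Sigma\cup D_1'\cup D_2'\subset D_3$, and:
\begin{itemize}
\item $C:=D_3\setminus\mathring D_3'$, $B_i:=D_i\setminus \mathring D_i'$ and $A'=\bbD\setminus\mathring D_3'$ are (partially thin) annuli;
\item $A'=A\cup C$, so $Y_{A'}=Y_A\circ Y_C$;
\item $Y_{B_i}\Omega_i'=\Omega_i$.
\end{itemize}
So the claim becomes: the vectors of the form $T(\Omega_1\otimes\Omega_2)$ with $T\in \Hom_{\cA(J_1)\otimes\cA(J_2)}$ are exactly the vectors $Y_C T'(\Omega_1'\otimes\Omega_2')$ with $T'\in\Hom_{\cA(J_1')\otimes\cA(J_2')}$. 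Here I use that $Y_A$ is injective (\cite[Lem 4.15]{HenriquesTenerWorms}), so the comparison can be made inside $H_0(D_3)$.

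Next I will recast membership in $\Hom_{\cA(J_1)\otimes\cA(J_2)}(H_0(D_1)\otimes H_0(D_2),H_0(D_3))(\Omega_1\otimes\Omega_2)$ as a boundedness condition. By Reeh--Schlieder and the split property, $\Omega_1\otimes\Omega_2$ is cyclic for $\cA(J_1)\otimes\cA(J_2)$. Hence a vector $\xi\in H_0(D_3)$ is of the form $T(\Omega_1\otimes\Omega_2)$ exactly when
\[
\Big\|\textstyle\sum_k (a_k\otimes b_k)\xi\Big\|\le C\,\Big\|\textstyle\sum_k (a_k\otimes b_k)(\Omega_1\otimes\Omega_2)\Big\|
\]
for all finite sums with $a_k\in\cA(J_1)$, $b_k\in\cA(J_2)$; the intertwiner $T$ is then the closure of the map $(a\otimes b)(\Omega_1\otimes \Omega_2)\mapsto (a\otimes b)\xi$. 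The same norm inequalities, with the optimal constant, should describe the strong operator topology on $X(\Sigma)$ once both sides are expressed through the same families of vectors. This also handles the topological part of the lemma.

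The main step, and the one I expect to be hardest, is transferring this boundedness condition along $C$ and $B_i$. The intervals $J_i$ and $J_i'$ differ, but both are thin boundary pieces of the respective annuli. I will use two facts:
\begin{itemize}
\item By \cite[Lem.~5.5]{HenriquesTenerWorms}, $Y_C$ and $Y_{B_i}$ are equivariant for the algebras of intervals in their thin parts, such as $J_i\cap J_i'$.
\item Elements of $\cA(J_i)$ supported away from the thin part are handled as worm insertions via \eqref{eqn: annuli with worms acting on disc with worms}, which gives $Y_{B_i}(a\,\Omega_i')=a\,\Omega_i$, and similarly for $Y_C$.
\end{itemize}
First I will try to shrink the supports of the test elements using cyclicity: by Reeh--Schlieder, the inequality needs to be checked only for $a_k,b_k$ in algebras of slightly smaller intervals, which can be pushed into the common thin region where equivariance applies directly. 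The remaining mismatch is absorbed by the bounded operators $Y_C$ and $Y_{B_i}$ acting on worm vectors. This yields the inclusion $Y_C(X'\text{-vectors})\subset X\text{-vectors}$.

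For the reverse inclusion, given $T$ I will construct $T'$ on the dense subspace $(\cA(J_1')\otimes\cA(J_2'))(\Omega_1'\otimes\Omega_2')$ using the worm-insertion identity and the density of the image of $Y_C$. The delicate point is showing that the resulting vector lies in the image of $Y_C$ and not merely in its closure.
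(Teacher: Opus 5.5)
\textbf{The proposal has a genuine gap at the step that carries the content of the lemma.} Your reduction to nested presentations, the factorisation $Y_{A'}=Y_AY_C$, the use of injectivity of $Y_A$, and the boundedness criterion for a vector to lie in $\Hom_{\cA(J_1)\otimes\cA(J_2)}(H_0(D_1)\otimes H_0(D_2),H_0(D_3))(\Omega_1\otimes\Omega_2)$ are all fine. The problem is the transfer step.

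It is not true that the inequality ``needs to be checked only for $a_k,b_k$ in algebras of slightly smaller intervals''. Reeh--Schlieder gives cyclicity of $\Omega_1\otimes\Omega_2$ for $\cA(K_1)\otimes\cA(K_2)$ with $K_i\subsetneq J_i$. So a bound for such test elements only produces an intertwiner for $\cA(K_1)\otimes\cA(K_2)$, and that Hom space is in general strictly larger than the one for $\cA(J_1)\otimes\cA(J_2)$.

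This matters because of where the annuli are thick. $B_i=D_i\setminus\mathring D_i'$ and $C=D_3\setminus\mathring D_3'$ are thick exactly over the part of $J_i$ not contained in $\partial D_i'$, and thin elsewhere. The algebras of intervals in the thin part are common to both intertwining conditions. Restricting the test elements to them therefore discards exactly the information that has to be carried across the thick part.

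Your worm-insertion bullet does not fill this in either. For $a\in\cA(J_1)$ supported over the thick part, $a\Omega_1'$ is not defined, since that part of $J_1$ is not on $\partial D_1'$. The worm identity only returns $B_1[a]\Omega_1'=a\Omega_1$, which says nothing about how $a$ moves past $Y_{B_1}$ on other vectors. The reverse inclusion is left open, and a direct attempt runs into the unbounded inverses of the $Y_{B_i}$.

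The missing idea is a localisation statement for the annuli themselves. The paper shrinks one disc at a time ($D_1'\subset D_1$, $D_2$ fixed) and observes that $B_1=D_1\setminus\mathring D_1'$ and $B_3=D_3\setminus\mathring D_3'$ have the same interior. It chooses boundary parametrisations with $\gamma_1=\gamma_3$ on $J_1$, $\gamma_1'=\gamma_3'$ on $J_1'$, and $\gamma_j=\gamma_j'$ on $\partial D_j\cap\partial D_j'$. Then the transported operators $v_1Y_{B_1}v_1'^*$ and $v_3Y_{B_3}v_3'^*$ lie over the same element of $\Ann$, so they agree in $\Ann_c$ up to a scalar. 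This common operator is localised in $\cA(\gamma_1(J_1))$.

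Hence $Y_{B_1}u_1$ and $c\,Y_{B_3}u_3$ (with $u_j=v_j'^*v_j$) are the actions of a \emph{single} element of $\cA(J_1)$ on $H_0(D_1)$ and on $H_0(D_3)$. So every intertwiner $x$ satisfies $c\,Y_{B_3}u_3x=x(Y_{B_1}u_1\otimes 1)$. This gives the explicit bijection $x\mapsto x'=c\,u_3x(u_1^*\otimes 1)$ between the two Hom spaces, characterised by $Y_{B_3}x'=x(Y_{B_1}\otimes 1)$. That yields both inclusions at once, without inverting any $Y$-operator or deciding membership in the image of $Y_C$.

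The same bijection settles the topological claim, since composing with fixed unitaries is a homeomorphism for the strong operator topology. By contrast, the optimal constant in your norm inequality is $\|x\|$. It would describe the norm topology, not the strong operator topology with which $X(\Sigma)$ is equipped.
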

\begin{proof}
To emphasize the potential dependence on $D_1$ and $D_2$, let us write $X(\Sigma, D_1, D_2)$ for the subset of $H_0(\bbD)$ defined in \eqref{eq: X(Sigma) c H_0}.
It suffices to show that $X(\Sigma, D_1, D_2)$ is unchanged when the discs $D_i$ are replaced by smaller discs $D_i' \subset D_i$ as for any two choices $(D_1, D_2)$ and $(D'_1, D'_2)$ of discs there exists a third choice $(D''_1, D''_2)$ such that $D_i''\subset D_i$ and $D_i''\subset D_i'$.
In fact, it suffices to replace only a single disc at a time ($D_1$, say), leaving the other unchanged.

Pick a presentation
$\Sigma = D_3 \setminus  \relint(D_1 \sqcup D_2)$
of $\Sigma$ as in \eqref{eq: def diamond}, with $D_3\subset \bbD$, and
let $D_1'\subset D_1$ be a smaller disc so that $D_1'$, $D_2$ together with $D_3'=\Sigma \cup D_1'\cup D_2$ form another presentation of $\Sigma$:\vspace{2mm}
\[
\tikz[scale=.5, baseline=0]{
\draw[fill=gray!20] (-1.41,0) circle (1);
\draw (1.41,0) circle (1);
\draw (-.705,.705) arc (-135:-45:1);
\draw (-.705,-.705) arc (135:45:1);
\draw (-1.41,1) arc (90:270:.5 and 1);
\node[scale=.85] at (0,-1.8) {$D_1$};
}
\quad
\tikz[scale=.5, baseline=0]{
\fill[fill=gray!20] (-1.41,1) arc (90:270:.5 and 1) arc (-90:90:1); 
\draw (-1.41,0) circle (1);
\draw (1.41,0) circle (1);
\draw (-.705,.705) arc (-135:-45:1);
\draw (-.705,-.705) arc (135:45:1);
\draw (-1.41,1) arc (90:270:.5 and 1);
\node[scale=.85] at (0,-1.8) {$D'_1$};
}
\quad
\tikz[scale=.5, baseline=0]{
\draw (-1.41,0) circle (1);
\draw[fill=gray!20] (1.41,0) circle (1);
\draw (-.705,.705) arc (-135:-45:1);
\draw (-.705,-.705) arc (135:45:1);
\draw (-1.41,1) arc (90:270:.5 and 1);
\node[scale=.85] at (0,-1.8) {$D_2$};
}
\quad
\tikz[scale=.5, baseline=0]{
\fill[fill=gray!20] (-.705,.705) arc (-135:-45:1) -- (.705,-.705) arc (-135+180:-45+180:1);
\draw[fill=gray!20] (-1.41,0) circle (1);
\draw[fill=gray!20] (1.41,0) circle (1);
\draw (-.705,.705) arc (-135:-45:1);
\draw (-.705,-.705) arc (135:45:1);
\draw (-1.41,1) arc (90:270:.5 and 1);
\node[scale=.85] at (0,-1.8) {$D_3$};
}
\quad
\tikz[scale=.5, baseline=0]{
\fill[fill=gray!20] (-.705,.705) arc (-135:-45:1) -- (.705,-.705) arc (-135+180:-45+180:1);
\fill[fill=gray!20] (-1.41,1) arc (90:270:.5 and 1) arc (-90:90:1); 
\draw (-1.41,0) circle (1);
\draw[fill=gray!20] (1.41,0) circle (1);
\draw (-.705,.705) arc (-135:-45:1);
\draw (-.705,-.705) arc (135:45:1);
\draw (-1.41,1) arc (90:270:.5 and 1);
\node[scale=.85] at (0,-1.8) {$D'_3$};
}\vspace{-2mm}
\]
Let\vspace{-1mm}
\begin{gather*}
J_1 = \partial D_1 \cap \partial D_3,\quad
J_1' = \partial D_1' \cap \partial D_3',\quad
J_2 = \partial D_2 \cap \partial D_3,
\\
A=\bbD\setminus \mathring D_3,\qquad A'=\bbD\setminus \mathring D'_3,
\\
T\,:=\Hom_{\cA(J_1)\otimes \cA(J_2)}\big(H_0(D_1)\otimes H_0(D_2), H_0(D_3)\big),\\
T':=\Hom_{\cA(J'_1)\otimes \cA(J_2)}\big(H_0(D'_1)\otimes H_0(D_2), H_0(D_3')\big).
\end{gather*}
Our task is to show that
$X(\Sigma, D_1, D_2) = Y_A T(\Omega_1\otimes \Omega_2)$ and 
$X(\Sigma, D'_1, D_2) = Y_{A'} T'(\Omega'_1\otimes \Omega_2)
$ are homeomorphic, and equal as subsets of $H_0$ (i.e.: the identity map is a homeomorphism from $X(\Sigma, D_1, D_2)$ to $X(\Sigma, D'_1, D_2)$). 

Let $B_1=D_1 \setminus \mathring D_1'$ and $B_3=D_3 \setminus \mathring D_3'$, and note that the interiors of these annuli coincide.
We have corresponding operators
\[
Y_{B_1}:H_0(D_1') \to H_0(D_1), \qquad Y_{B_3}:H_0(D_3') \to H_0(D_3)
\]
as in~\eqref{eqY}.

We claim that there exists a linear isomorphism $T \cong T'$
which identifies $x \in T$ and $x' \in T'$ whenever
\begin{equation}\label{eqn: relationship between x and x prime}
Y_{B_3}x' = x(Y_{B_1} \otimes 1).
\end{equation}
That is, we claim that for every $x \in T$, there exists a unique $x' \in T'$ such that \eqref{eqn: relationship between x and x prime} holds, and conversely for every $x'$ there exists a unique $x$ satisfying that same relation.
The uniqueness part of these claims follow from the fact that $Y_{B_1}$ and $Y_{B_3}$ are injective with dense image (\cite[Lem 4.15]{HenriquesTenerWorms}). It remains to show existence.

For $j=1,3$, pick diffeomorphisms $\gamma_j:\partial D_j\to S^1$ and $\gamma'_j:\partial D'_j\to S^1$ satisfying
\begin{align*}
\gamma_1&=\gamma'_1\quad
\text{on}\quad \partial D_1 \cap \partial D_1'
\quad
&\gamma_3&=\gamma'_3\quad
\text{on}\quad \partial D_3 \cap \partial D_3'
\\
\gamma_1&=\gamma_3\quad
\text{on}\quad \partial D_1 \cap \partial D_3
\quad
&\gamma'_1&=\gamma'_3\quad
\text{on}\quad \partial D'_1 \cap \partial D_3'.
\end{align*}
Let $v_j: H_0(D_j)\to H_0(\bbD)$ and $v'_j: H_0(D'_j)\to H_0(\bbD)$ be corresponding implementing unitaries (which are unique up to phase), as described in Section~\ref{sec: conformal nets}, and let 
\[
u_j:=v'^*_jv_j:H_0(D_j) \to H_0(D_j').
\]
The elements $v_1Y_{B_1}v_1'^*$ and $v_3Y_{B_3}v_3'^*\in\Ann_c$ map to the same place under the projection $\Ann_c\twoheadrightarrow \Ann$, so there exists a scalar $c\in \bbC^\times$ such that
\[
v_1Y_{B_1}v_1'^*
=
c\cdot v_3Y_{B_3}v_3'^*
=: \underline A.
\]
In light of the fact that $\gamma_1|_{J_1} = \gamma_3|_{J_1}$, this operator $\underline A \in B(H_0(\bbD))$ is localised in $\cA(K)$ for $K:=\gamma_1(J_1) =\gamma_3(J_1)$.
Thus there is a single element of $\cA(J_1)$ whose action on $H_0(D_1)$ is given by
\[
{\gamma_1}^{-1}_*(v_1Y_{B_1}v_1'^*) = Y_{B_1}u_1
\]
and whose action on $H_0(D_3)$ is given by
\[
c\cdot {\gamma_3}^{-1}_*( v_3Y_{B_3}v_3'^*) = c\cdot Y_{B_3}u_3.
\]

If $x \in T$, then set $x' := c\cdot u_3x(u_1^* \otimes 1)\in T'$.
Since $x$ intertwines the actions of $\cA(J_1)$, our above calculation shows that
\[
c\cdot Y_{B_3}u_3x = x(Y_{B_1}u_1 \otimes 1),
\]
from which the identity \eqref{eqn: relationship between x and x prime} immediately follows.
Conversely, starting with $x' \in T'$ we may set $x := c^{-1}\cdot u_3^* x'(u_1 \otimes 1)$, and again $x$ and $x'$ satisfy \eqref{eqn: relationship between x and x prime}.
We have thus established the existence of an isomorphism $T \cong T'$ characterized by this identity. Moreover, this isomorphism is visibly compatible with the strong operator topologies on those two spaces.

Finally, if $x$ and $x'$ satisfy \eqref{eqn: relationship between x and x prime}, then
\[
Y_Ax(\Omega_1 \otimes \Omega_2) = Y_A x (Y_{B_1}\Omega_1' \otimes \Omega_2)
= Y_A Y_{B_3} x'(\Omega_1' \otimes \Omega_2) = Y_{A'} x'(\Omega_1' \otimes \Omega_2).
\]
It follows that
$X(\Sigma,D_1,D_2) = Y_AT(\Omega_1 \otimes \Omega_2)$ and 
$Y_{A'}T'(\Omega_1'\otimes \Omega_2) = X(\Sigma, D_1', D_2)$
are equal as subsets of $H_0$,
and that $x\in T$ and $x'\in T'$ map to the same element of $H_0$ when $x$ and $x'$ are related by \eqref{eqn: relationship between x and x prime}.
\end{proof}

\begin{lem}\label{lem: Omega in X Sigma}
    Let $\Sigma \subset \mathring \bbD$ be a four-pointed star.
    \begin{enumerate}[i)]
        \item $X(\Sigma)$ is a dense subspace of $H_0$ for the Hilbert space topology
        \item If $\Sigma' \subset \mathring \Sigma$ is another four-pointed star, then $X(\Sigma') \subset X(\Sigma)$.
        \item $\Omega \in X(\Sigma)$.
    \end{enumerate}
\end{lem}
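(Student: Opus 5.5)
The plan is to prove the three parts in the order (ii), (i), (iii). Parts (ii) and (i) are fairly formal consequences of Lemma~\ref{lem: XSigma doesnt depend on presentation}, the equivariance of annulus operators \cite[Lem.~5.5]{HenriquesTenerWorms} and Reeh--Schlieder. Part (iii) is where the split property has to be used quantitatively.

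\emph{Part (ii).} Use the freedom of Lemma~\ref{lem: XSigma doesnt depend on presentation} to choose compatible presentations. Take $\Sigma' = D_3'\setminus\relint(D_1'\sqcup D_2')$ and $\Sigma = D_3\setminus \relint(D_1\sqcup D_2)$ with $D_i\subset D_i'$ for $i=1,2$ and $D_3'\subset D_3$. Arrange also that the incoming intervals satisfy $J_i:=\partial D_i\cap\partial D_3 \subset J_i' := \partial D_i'\cap \partial D_3'$. Given $x'\in T'$, set
\[
x := Y_{D_3\setminus \mathring D_3'}\; x'\;\big(Y_{D_1'\setminus \mathring D_1}\otimes Y_{D_2'\setminus \mathring D_2}\big).
\]
Each annulus operator is equivariant for the algebra of an interval lying on both of its boundary circles \cite[Lem.~5.5]{HenriquesTenerWorms}. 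Hence $x$ intertwines $\cA(J_1)\otimes\cA(J_2)$, so $x\in T$. By \eqref{eqn: annuli without worms acting on discs with worms} the annulus operators send vacua to vacua, and $Y_A Y_{D_3\setminus\mathring D_3'} = Y_{A'}$ by the same equation together with density of worm vectors. Therefore $Y_A x(\Omega_1\otimes\Omega_2) = Y_{A'}x'(\Omega_1'\otimes\Omega_2')$, which gives $X(\Sigma')\subset X(\Sigma)$.

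\emph{Part (i).} First, $Y_A$ has dense image by \cite[Lem.~4.15]{HenriquesTenerWorms}, so it suffices to show that $T(\Omega_1\otimes\Omega_2)$ is dense in $H_0(D_3)$. Second, $T$ is a left module over the commutant $(\cA(J_1)\otimes\cA(J_2))'$ on $H_0(D_3)$. Third, this commutant contains $\cA(K)$ for each component $K$ of $\partial D_3\setminus(J_1\cup J_2)$. So, provided $T$ contains at least one $x_0$ with $\xi:=x_0(\Omega_1\otimes\Omega_2)$ cyclic for that commutant, the set $T(\Omega_1\otimes\Omega_2)$ is dense. Such an $x_0$ can be obtained from the split property. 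Both $H_0(D_1)\otimes H_0(D_2)$ and $H_0(D_3)$ are standard forms of $\cA(J_1)\otimes\cA(J_2)\cong\cA(J_1)\vee\cA(J_2)$, by Reeh--Schlieder and the split property. Hence there is a unitary intertwiner $x_0$, and it carries the cyclic separating vector $\Omega_1\otimes\Omega_2$ to a vector that is again cyclic and separating. Alternatively, part (iii) gives $\xi = \Omega_{D_3}$ directly, which is cyclic for the commutant by Reeh--Schlieder.

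\emph{Part (iii), the main obstacle.} Since $Y_A\Omega_{D_3}=\Omega$ and $Y_A$ is injective, we need some $x\in T$ with $x(\Omega_1\otimes\Omega_2)=\Omega_{D_3}$ exactly, and $x$ must be bounded. For this to be possible, the vacuum state of $\cA(J_1)\vee\cA(J_2)$ must be dominated, in a suitable sense, by the product state $\omega_1\otimes\omega_2$. My first attempt would go as follows.
\begin{enumerate}[1)]
\item Choose a product vector $\eta\in H_0(D_3)$, implementing $\omega_1\otimes\omega_2$. It exists by the split property, and it gives an isometric intertwiner $x_0$ with $x_0(\Omega_1\otimes\Omega_2)=\eta$.
\item Exhibit an operator $y$ in the commutant $(\cA(J_1)\otimes\cA(J_2))'$ with $y\eta=\Omega_{D_3}$, and take $x := y x_0$.
\end{enumerate}
To produce $y$, I would use the freedom to shrink the presentation (Lemma~\ref{lem: XSigma doesnt depend on presentation}) together with part (ii). The vacuum $\Omega_{D_3}=Y_B\Omega_{D_3''}$ comes from a smaller disc $D_3''$ through an annulus $B=D_3\setminus\mathring D_3''$ whose two boundary circles share the intervals $J_1,J_2$. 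By \cite[Lem.~5.5]{HenriquesTenerWorms}, $Y_B$ commutes with $\cA(J_1)\otimes\cA(J_2)$. The hope is that $Y_B$, which is a trace-class-like thickening away from $J_1\cup J_2$, is regular enough to convert a product-state vector on the small star into the vacuum on the large one. Making this bounded conversion precise is the step I expect to be hardest. If it fails, the fallback is to build the intertwiner directly, using an explicit conformal welding of $D_1$ and $D_2$ along the outgoing intervals.
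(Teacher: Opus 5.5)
Your parts (ii) and (i) are fine. For (ii) you follow the paper's argument: compose $x'$ with $Y_{D_3\setminus\mathring D_3'}$ and $Y_{C_1}\otimes Y_{C_2}$. For (i) you take a different but valid route: both $H_0(D_1)\otimes H_0(D_2)$ and $H_0(D_3)$ are standard forms, so a unitary intertwiner exists, and $T$ is a module over the commutant. This is a special case of the general density fact the paper cites.

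The genuine gap is (iii), which the proposal does not actually prove. Your steps 1)--2) only restate it. A bounded $y$ in the commutant with $y\eta=\Omega_{D_3}$ exists if and only if $\|m\Omega_{D_3}\|\le C\|m(\Omega_1\otimes\Omega_2)\|$ for all $m\in\cA(J_1)\vee\cA(J_2)$. In other words, the vacuum state must be dominated by the product state, and that is precisely the content of (iii).

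The tool you propose cannot supply this domination. $Y_B$ is $\cA(J_1)\otimes\cA(J_2)$-equivariant and sends $\Omega_{D_3''}$ to $\Omega_{D_3}$. So it only shows that the $D_3$-vacuum is dominated by the $D_3''$-vacuum on $\cA(J_1)\vee\cA(J_2)$. The product state never enters, and you are left with the same problem for a smaller disc. Any appeal to trace-class behaviour would be circular, since the trace class condition is deduced later from this very lemma (and $Y_B$ touches the boundary along $J_1\cup J_2$ anyway). Note also that your sketch never uses the irreducibility axiom $V(0)=\bbC\Omega$, which is essential here.

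The missing idea is to produce $\Omega$ by averaging rather than by a direct domination estimate.
\begin{enumerate}[1)]
\item After a M\"obius transformation, assume $0\in\mathring\Sigma$. Pick $s\bbD\subset\Sigma$ and a star $\Sigma'\subset s\mathring\bbD$, so that every rotate $e^{it}\Sigma'$ lies in $\mathring\Sigma$. By (ii) and rotation covariance, $e^{itL_0}X(\Sigma')=X(e^{it}\Sigma')\subset X(\Sigma)$.
\item By (i), choose $\xi\in X(\Sigma')$ whose component in $V(0)=\bbC\Omega$ is exactly $\Omega$.
\item Write $e^{itL_0}\xi=Y_Ax_t(\Omega_1\otimes\Omega_2)$ with $x_t\in T$; here $x_t$ is unique, by injectivity of $Y_A$ and Reeh--Schlieder.
\item The explicit formula from (ii), uniform norm bounds on the annulus operators, and joint strong continuity of multiplication on bounded sets make $t\mapsto x_t$ bounded and strongly continuous.
\end{enumerate}
Hence $y:=\frac{1}{2\pi}\int_0^{2\pi}x_t\,dt$ is again in $T$, and
\[
Y_Ay(\Omega_1\otimes\Omega_2)=\frac{1}{2\pi}\int_0^{2\pi}e^{itL_0}\xi\,dt=\Omega .
\]
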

\begin{proof}
(i). Pick a presentation
$\Sigma = D_3 \setminus  \relint(D_1 \sqcup D_2)$
of $\Sigma$ with $D_3\subset \bbD$, and
let $A := \bbD \setminus \mathring D_3$ so that 
\[
X(\Sigma) = \Big\{Y_Ax(\Omega_1 \otimes \Omega_2) \, : \, x \in \Hom_{\cA(J_1)\otimes \cA(J_2)}\big(H_0(D_1)\otimes H_0(D_2), H_0(D_3)\big) \Big\} \subset H_0,
\]
where $J_1 = \partial D_1 \cap \partial D_3$ and $J_2 = \partial D_2 \cap \partial D_3$.
We use the following well-known fact from the general theory of von Neumann algebras: If $M\subset B(H)$ is a von Neumann algebra and $\Omega \in H$ is a separating vector, then for any representation $K$ of $M$ the space $\Hom_M(H, K)\Omega$ is dense in $K$.
Statement (i) now follows immediately from this as $\Omega_1 \otimes \Omega_2$ is separating for $\cA(J_1)\otimes \cA(J_2)$ (by the Reeh-Schlieder theorem) and $Y_A$ has dense image (\cite[Lem 4.15]{HenriquesTenerWorms}).

(ii).
Let $D_j$, $A$, and $J_j$ be as above, and let $\Sigma' \subset \mathring \Sigma$ be another four-pointed star.
We can then choose a presentation $\Sigma' = D_3' \setminus \relint(D_1'\sqcup D_2')$ of $\Sigma'$ such that $D_1 \subseteq D_1'$, $D_2 \subseteq D_2'$, and $D_3' \subseteq D_3$:
\[
\hspace{-1mm}
\tikz[scale=.5, baseline=0]{
\fill[fill=gray!20] (-.85,.85) ..controls +(-.6,.6) and +(0,.85).. (-2.41,0) ..controls +(0,-.85) and +(-.6,-.6).. (-.85,-.85) ..controls +(.35,.35) and +(.35,-.35).. (-.85,.85);
\draw (-.3,.3) -- (-.85,.85) ..controls +(-.6,.6) and +(0,.85).. (-2.41,0) ..controls +(0,-.85) and +(-.6,-.6).. (-.85,-.85) -- (-.3,-.3);
\draw (.3,.3) -- (.85,.85) ..controls +(.6,.6) and +(0,.85).. (2.41,0) ..controls +(0,-.85) and +(.6,-.6).. (.85,-.85) -- (.3,-.3);
\draw (-.85,.85) ..controls +(.35,-.35) and +(-.35,-.35).. (.85,.85);
\draw (-.85,-.85) ..controls +(.35,.35) and +(-.35,.35).. (.85,-.85);
\draw (-.85,.85) ..controls +(.35,-.35) and +(.35,.35).. (-.85,-.85);
\draw (.85,.85) ..controls +(-.35,-.35) and +(-.35,.35).. (.85,-.85);
\draw (-.3,.3) ..controls +(.1,-.1) and +(-.1,-.1).. (.3,.3);
\draw (-.3,-.3) ..controls +(.1,.1) and +(-.1,.1).. (.3,-.3);
\draw (-.3,.3) ..controls +(.1,-.1) and +(.1,.1).. (-.3,-.3);
\draw (.3,.3) ..controls +(-.1,-.1) and +(-.1,.1).. (.3,-.3);
\node[scale=.85] at (0,-1.8) {$D_1$};
}
\quad
\tikz[scale=.5, baseline=0]{
\fill[fill=gray!20] (.85,.85) ..controls +(.6,.6) and +(0,.85).. (2.41,0) ..controls +(0,-.85) and +(.6,-.6).. (.85,-.85) ..controls +(-.35,.35) and +(-.35,-.35).. (.85,.85);
\draw (-.3,.3) -- (-.85,.85) ..controls +(-.6,.6) and +(0,.85).. (-2.41,0) ..controls +(0,-.85) and +(-.6,-.6).. (-.85,-.85) -- (-.3,-.3);
\draw (.3,.3) -- (.85,.85) ..controls +(.6,.6) and +(0,.85).. (2.41,0) ..controls +(0,-.85) and +(.6,-.6).. (.85,-.85) -- (.3,-.3);
\draw (-.85,.85) ..controls +(.35,-.35) and +(-.35,-.35).. (.85,.85);
\draw (-.85,-.85) ..controls +(.35,.35) and +(-.35,.35).. (.85,-.85);
\draw (-.85,.85) ..controls +(.35,-.35) and +(.35,.35).. (-.85,-.85);
\draw (.85,.85) ..controls +(-.35,-.35) and +(-.35,.35).. (.85,-.85);
\draw (-.3,.3) ..controls +(.1,-.1) and +(-.1,-.1).. (.3,.3);
\draw (-.3,-.3) ..controls +(.1,.1) and +(-.1,.1).. (.3,-.3);
\draw (-.3,.3) ..controls +(.1,-.1) and +(.1,.1).. (-.3,-.3);
\draw (.3,.3) ..controls +(-.1,-.1) and +(-.1,.1).. (.3,-.3);
\node[scale=.85] at (0,-1.8) {$D_2$};
}
\quad
\tikz[scale=.5, baseline=0]{
\fill[fill=gray!20] (-.85,.85) ..controls +(-.6,.6) and +(0,.85).. (-2.41,0) ..controls +(0,-.85) and +(-.6,-.6).. (-.85,-.85) 
..controls +(.35,.35) and +(-.35,.35)..
(.85,-.85) ..controls +(.6,-.6) and +(0,-.85).. (2.41,0) ..controls +(0,.85) and +(.6,.6).. (.85,.85) 
..controls +(-.35,-.35) and +(.35,-.35)..
(-.85,.85)
;
\draw (-.3,.3) -- (-.85,.85) ..controls +(-.6,.6) and +(0,.85).. (-2.41,0) ..controls +(0,-.85) and +(-.6,-.6).. (-.85,-.85) -- (-.3,-.3);
\draw (.3,.3) -- (.85,.85) ..controls +(.6,.6) and +(0,.85).. (2.41,0) ..controls +(0,-.85) and +(.6,-.6).. (.85,-.85) -- (.3,-.3);
\draw (-.85,.85) ..controls +(.35,-.35) and +(-.35,-.35).. (.85,.85);
\draw (-.85,-.85) ..controls +(.35,.35) and +(-.35,.35).. (.85,-.85);
\draw (-.85,.85) ..controls +(.35,-.35) and +(.35,.35).. (-.85,-.85);
\draw (.85,.85) ..controls +(-.35,-.35) and +(-.35,.35).. (.85,-.85);
\draw (-.3,.3) ..controls +(.1,-.1) and +(-.1,-.1).. (.3,.3);
\draw (-.3,-.3) ..controls +(.1,.1) and +(-.1,.1).. (.3,-.3);
\draw (-.3,.3) ..controls +(.1,-.1) and +(.1,.1).. (-.3,-.3);
\draw (.3,.3) ..controls +(-.1,-.1) and +(-.1,.1).. (.3,-.3);
\node[scale=.85] at (0,-1.8) {$D_3$};
}
\quad
\tikz[scale=.5, baseline=0]{
\fill[fill=gray!20] (-.3,.3) -- (-.85,.85) ..controls +(-.6,.6) and +(0,.85).. (-2.41,0) ..controls +(0,-.85) and +(-.6,-.6).. (-.85,-.85) -- (-.3,-.3) ..controls +(.1,.1) and +(.1,-.1).. (-.3,.3);
\draw (-.3,.3) -- (-.85,.85) ..controls +(-.6,.6) and +(0,.85).. (-2.41,0) ..controls +(0,-.85) and +(-.6,-.6).. (-.85,-.85) -- (-.3,-.3);
\draw (.3,.3) -- (.85,.85) ..controls +(.6,.6) and +(0,.85).. (2.41,0) ..controls +(0,-.85) and +(.6,-.6).. (.85,-.85) -- (.3,-.3);
\draw (-.85,.85) ..controls +(.35,-.35) and +(-.35,-.35).. (.85,.85);
\draw (-.85,-.85) ..controls +(.35,.35) and +(-.35,.35).. (.85,-.85);
\draw (-.85,.85) ..controls +(.35,-.35) and +(.35,.35).. (-.85,-.85);
\draw (.85,.85) ..controls +(-.35,-.35) and +(-.35,.35).. (.85,-.85);
\draw (-.3,.3) ..controls +(.1,-.1) and +(-.1,-.1).. (.3,.3);
\draw (-.3,-.3) ..controls +(.1,.1) and +(-.1,.1).. (.3,-.3);
\draw (-.3,.3) ..controls +(.1,-.1) and +(.1,.1).. (-.3,-.3);
\draw (.3,.3) ..controls +(-.1,-.1) and +(-.1,.1).. (.3,-.3);
\node[scale=.85] at (0,-1.8) {$D'_1$};
}
\quad
\tikz[scale=.5, baseline=0]{
\fill[fill=gray!20] (.3,.3) -- (.85,.85) ..controls +(.6,.6) and +(0,.85).. (2.41,0) ..controls +(0,-.85) and +(.6,-.6).. (.85,-.85) -- (.3,-.3) ..controls +(-.1,.1) and +(-.1,-.1).. (.3,.3);
\draw (-.3,.3) -- (-.85,.85) ..controls +(-.6,.6) and +(0,.85).. (-2.41,0) ..controls +(0,-.85) and +(-.6,-.6).. (-.85,-.85) -- (-.3,-.3);
\draw (.3,.3) -- (.85,.85) ..controls +(.6,.6) and +(0,.85).. (2.41,0) ..controls +(0,-.85) and +(.6,-.6).. (.85,-.85) -- (.3,-.3);
\draw (-.85,.85) ..controls +(.35,-.35) and +(-.35,-.35).. (.85,.85);
\draw (-.85,-.85) ..controls +(.35,.35) and +(-.35,.35).. (.85,-.85);
\draw (-.85,.85) ..controls +(.35,-.35) and +(.35,.35).. (-.85,-.85);
\draw (.85,.85) ..controls +(-.35,-.35) and +(-.35,.35).. (.85,-.85);
\draw (-.3,.3) ..controls +(.1,-.1) and +(-.1,-.1).. (.3,.3);
\draw (-.3,-.3) ..controls +(.1,.1) and +(-.1,.1).. (.3,-.3);
\draw (-.3,.3) ..controls +(.1,-.1) and +(.1,.1).. (-.3,-.3);
\draw (.3,.3) ..controls +(-.1,-.1) and +(-.1,.1).. (.3,-.3);
\node[scale=.85] at (0,-1.8) {$D'_2$};
}
\quad
\tikz[scale=.5, baseline=0]{

\fill[fill=gray!20] (-.3,.3) -- (-.85,.85) ..controls +(-.6,.6) and +(0,.85).. (-2.41,0) ..controls +(0,-.85) and +(-.6,-.6).. (-.85,-.85) -- (-.3,-.3) 
..controls +(.1,.1) and +(-.1,.1)..
(.3,-.3) -- (.85,-.85) ..controls +(.6,-.6) and +(0,-.85).. (2.41,0) ..controls +(0,.85) and +(.6,.6).. (.85,.85) -- (.3,.3)
..controls +(-.1,-.1) and +(.1,-.1)..
(-.3,.3) -- (-.85,.85)
;

\draw (-.3,.3) -- (-.85,.85) ..controls +(-.6,.6) and +(0,.85).. (-2.41,0) ..controls +(0,-.85) and +(-.6,-.6).. (-.85,-.85) -- (-.3,-.3);
\draw (.3,.3) -- (.85,.85) ..controls +(.6,.6) and +(0,.85).. (2.41,0) ..controls +(0,-.85) and +(.6,-.6).. (.85,-.85) -- (.3,-.3);
\draw (-.85,.85) ..controls +(.35,-.35) and +(-.35,-.35).. (.85,.85);
\draw (-.85,-.85) ..controls +(.35,.35) and +(-.35,.35).. (.85,-.85);
\draw (-.85,.85) ..controls +(.35,-.35) and +(.35,.35).. (-.85,-.85);
\draw (.85,.85) ..controls +(-.35,-.35) and +(-.35,.35).. (.85,-.85);
\draw (-.3,.3) ..controls +(.1,-.1) and +(-.1,-.1).. (.3,.3);
\draw (-.3,-.3) ..controls +(.1,.1) and +(-.1,.1).. (.3,-.3);
\draw (-.3,.3) ..controls +(.1,-.1) and +(.1,.1).. (-.3,-.3);
\draw (.3,.3) ..controls +(-.1,-.1) and +(-.1,.1).. (.3,-.3);
\node[scale=.85] at (0,-1.8) {$D'_3$};
}
\]  
    Let $A' := \bbD \setminus \mathring D_3'$ and $B := D_3 \setminus \mathring D_3'$, so that $A' = A \cup B$.
    For $j \in \{1,2\}$, let $J_j' := \partial D_j' \cap \partial D_3'$, and let $C_j := D_j' \setminus \mathring D_j$. 
Since $J_j \subseteq J_j'$, we have $Y_{C_j} \in \Hom_{\cA(J_j)}(H_0(D_j), H_0(D'_j))$ by \cite[Lem. 4.19]{HenriquesTenerWorms}.
Similarly, $Y_B \in \Hom_{\cA(J_1) \otimes \cA(J_2)}(H_0(D_3'),H_0(D_3))$, hence for any $x \in\Hom_{\cA(J'_1)\otimes \cA(J'_2)}\big(H_0(D'_1)\otimes H_0(D'_2), H_0(D'_3)\big)$ we have
    \[
    Y_Bx(Y_{C_1} \otimes Y_{C_2}) \in \Hom_{\cA(J_1)\otimes \cA(J_2)}\big(H_0(D_1)\otimes H_0(D_2), H_0(D_3)\big).
    \]
    To finish the argument, we check that for any $x$ as above, the corresponding element of $X(\Sigma')$ lies in $X(\Sigma)$:
    \begin{equation}\label{eq: YYxYY}
    Y_{A'}x(\Omega_1' \otimes \Omega_2')
    =
    Y_{A}\big(Y_Bx(Y_{C_1} \otimes Y_{C_2})\big)(\Omega_1 \otimes \Omega_2)
    \in X(\Sigma).
    \end{equation}
    This completes the proof of (ii).
    
     
    
    (iii).
    If $\varphi:\bbD \to \bbD$ is a biholomorphic map
    and $U_\varphi:H_0 \to H_0$ is the associated unitary operator, then by construction we have $X(\varphi(\Sigma)) = U_\varphi X(\Sigma)$.
    We may thus assume without loss of generality that $0 \in \mathring \Sigma$ (as $U_\varphi\Omega = \Omega$ for any $\varphi$ as above).
    Pick $s > 0 $ with $s\bbD \subset \Sigma$, and choose a four-pointed star $\Sigma'$ contained in $s\mathring \bbD$.
    For any $t\in \bbR$, let $\Sigma'_t$ denote the result of applying a rotation of angle $t$ to $\Sigma'$:
    \[
    \Sigma'_t := e^{it}\Sigma' \subset \mathring \Sigma.
    \]
    The subspaces $X(\Sigma')$ and $X(\Sigma'_t)$ are related by $X(\Sigma'_t)=e^{itL_0}X(\Sigma')$, and so
    \begin{equation}\label{eq: ksjdgbkdn}
    \qquad\,\,\,\, e^{itL_0}X(\Sigma') \subset X(\Sigma),\qquad\forall t \in \bbR.
    \end{equation}


    Let $\Sigma = D_3 \setminus \relint(D_1 \sqcup D_2)$ be a presentation of $\Sigma$ with $D_3 \subseteq \bbD$, and let $A = \bbD \setminus \mathring D_3$.
    Recall that $V(0)=\bbC \Omega$ by the irreducibility axiom of the conformal net $\cA$.
    By (i), we may choose $\xi \in X(\Sigma')$ whose projection onto $V(0)$ is precisely $\Omega$.
    By \eqref{eq: ksjdgbkdn}, we have $e^{itL_0} \xi \in X(\Sigma)$ $\forall t \in \bbR$, and thus there exist $x_t \in \Hom_{\cA(J_1)\otimes \cA(J_2)}(H_0(D_1)\otimes H_0(D_2), H_0(D_3))$ such that
    \[
    e^{itL_0}\xi = Y_A x_t(\Omega_1 \otimes \Omega_2).
    \]
    Such operators $x_t$ are unique, as $\Omega_1 \otimes\Omega_2$ is cyclic for $\cA(J_1) \otimes \cA(J_2)$ and $Y_A$ is injective.
    If we let $x\in\Hom_{\cA(J'_1)\otimes \cA(J'_2)}(H_0(D'_1)\otimes H_0(D'_2), H_0(D'_3))$ be the operator corresponding to $\xi\in X(\Sigma')$, then $x_t$ is the image of $x$ under the maps
    \begin{align*}
    \Hom_{\cA(J'_1)\otimes \cA(J'_2)}&(H_0(D'_1)\otimes H_0(D'_2), H_0(D'_3))
    \\ &\cong X(\Sigma') \overset{e^{itL_0}}{\longrightarrow} X(\Sigma'_t)\hookrightarrow X(\Sigma)\cong \Hom_{\cA(J_1)\otimes \cA(J_2)}(H_0(D_1)\otimes H_0(D_2), H_0(D_3)).
    \end{align*}
    Since all $Y$ operators involved in \eqref{eq: YYxYY} are uniformly bounded in norm \cite[Thm. 6.4]{HenriquesTenerIntegratingax}, and multiplication of operators is jointly strong-operator-continuous on bounded sets, we see from the above description that $t\mapsto x_t$ is a continuous map $\bbR\to X(\Sigma)$ (i.e., continuous with respect to the strong operator topology on $\Hom(H_0(D_1)\otimes H_0(D_2), H_0(D_3))$). We may thus define $y := \tfrac{1}{2\pi} \int_0^{2\pi} x_t \, dt$. By construction, $y \in \Hom_{\cA(J_1)\otimes \cA(J_2)}(H_0(D_1)\otimes H_0(D_2), H_0(D_3))$.
    Moreover,
    \[
    Y_A y(\Omega_1 \otimes \Omega_2) =  \frac{1}{2\pi} \int_0^{2\pi} Y_A x_t(\Omega_1 \otimes \Omega_2) \, dt = \frac{1}{2\pi} \int_0^{2\pi} e^{itL_0}\xi \, dt = \Omega.
    \]
    Thus $\Omega \in X(\Sigma)$, as claimed.
\end{proof}

We are now ready to prove the main result of this section, establishing the existence of bounded maps satisfying \eqref{eqn: vacuum cft characterising property section intro} and establishing the existence of the functorial CFT corresponding to the vacuum sector of the conformal net $\cA$.

\begin{thm}\label{thm: Yd exists}
    Let $\mathfrak d:D_1 \sqcup \cdots \sqcup D_n \to D \in \mathfrak D$ be a multidisc embedding.
    Then there exists a unique bounded linear operator 
    \[
    Y_{\mathfrak d}: H_0(D_1) \otimes \cdots \otimes H_0(D_n) \to H_0(D)
    \]
    with the following property: for $j=1, \ldots, n$, whenever $\underline I_j$ are collections of disjoint extendable intervals in $D_j$, and $\underline x_j \in \cA(\underline I_j)$,
    we have 
    \begin{equation}\label{eqn: characterizing property of Yd}
Y_{\mathfrak d} \big( |\underline x_1\rangle_{D_1} \otimes \cdots \otimes |\underline x_n\rangle_{D_n} \big) = |\mathfrak d_*(\underline x_1) \cdots \mathfrak d_*(\underline x_n) \rangle_D.
    \end{equation}
    These maps form an algebra for the operad $\mathfrak D$ of conformal discs.
\end{thm}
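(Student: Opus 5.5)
Uniqueness comes first and is cheap. The vectors $|\underline x\rangle_{D_j}$ span a dense subspace of $H_0(D_j)$: already single insertions $|x\rangle_{D_j}$ with $x\in\cA(I)$, $I\subset\partial D_j$, recover $\cA(I)\Omega_{D_j}$, which is dense by Reeh--Schlieder. Hence elementary tensors of worm vectors span a dense subspace of $H_0(D_1)\otimes\cdots\otimes H_0(D_n)$, and a bounded operator is determined by \eqref{eqn: characterizing property of Yd}. Once existence is known, the operad law follows at once. Both $Y_{\mathfrak d'}\circ_i Y_{\mathfrak d}$ and $Y_{\mathfrak d'\circ_i\mathfrak d}$ are bounded and satisfy \eqref{eqn: characterizing property of Yd}, using $(\mathfrak d'\circ_i\mathfrak d)_* = \mathfrak d'_*\circ\mathfrak d_*$, so uniqueness forces them to agree. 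The case $n=0$ is the definition $1\mapsto\Omega_D$, and $n=1$ is handled by $Y_A$ composed with a unitary $U_\varphi$ via \eqref{eqn: annuli without worms acting on discs with worms} and \eqref{eqn: biholomorphic map acting on disc insertions}; the degenerate case where $\mathfrak d(D_1)=D$ is just $U_{\mathfrak d}$.

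For existence I would reduce to $n=2$ by induction. Given $\mathfrak d\in\mathfrak D(n)$ with $n\ge 3$, choose a disc $E\subset D$ containing $\mathfrak d(D_1)\cup\mathfrak d(D_2)$ and disjoint from the other images. Such an $E$ exists by joining $\mathfrak d(D_1)$ and $\mathfrak d(D_2)$ with a thin neighbourhood of an arc avoiding the other discs and then smoothing, possibly shrinking slightly near $\partial D$. This factors $\mathfrak d=\mathfrak d''\circ_1\mathfrak d'$ with $\mathfrak d'\in\mathfrak D(2)$ and $\mathfrak d''\in\mathfrak D(n-1)$. The composite of the bounded operators is bounded, and it satisfies \eqref{eqn: characterizing property of Yd} because the worm identity holds at each stage.

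For $n=2$ I would first reduce to a four-pointed star. Choose a disc $D_3\subset D$ with $\mathfrak d(D_1),\mathfrak d(D_2)\subset D_3$, each meeting $\partial D_3$ in a nondegenerate interval $J_j$, so that $\Sigma=D_3\setminus\relint(\mathfrak d D_1\sqcup\mathfrak d D_2)$ is a four-pointed star. Enlarging each $\mathfrak d(D_j)$ slightly gives discs $D_j^+$, and I would aim to factor $Y_{\mathfrak d}$ as
\[
Y_{D\setminus \mathring D_3}\circ x\circ (Y_{D_1^+\setminus \mathfrak d\mathring D_1}\otimes Y_{D_2^+\setminus \mathfrak d \mathring D_2}),
\]
where $x\in\Hom_{\cA(J_1)\otimes\cA(J_2)}(H_0(D_1^+)\otimes H_0(D_2^+),H_0(D_3))$ satisfies $x(\Omega_1\otimes\Omega_2)=\Omega_{D_3}$. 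Transported into $\bbD$, such an $x$ exists exactly when $\Omega\in X(\Sigma)$, which is Lemma~\ref{lem: Omega in X Sigma}(iii), with independence of presentation given by Lemma~\ref{lem: XSigma doesnt depend on presentation}. Then $x$ is bounded, and for worms on the boundary intervals of the $D_j^+$ lying in $J_j$ the intertwining property gives $x(a\Omega_1\otimes b\Omega_2)=ab\,\Omega_{D_3}=|ab\rangle$. This requires arranging $D_j^+$ so that $\partial D_j^+\cap\partial D_3 = J_j$.

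The main obstacle is upgrading this from insertions on $J_j$ to arbitrary worm insertions $|\underline x_j\rangle$ in $D_j$. My plan is to absorb the extra insertions into the annuli: write $|\underline x_j\rangle_{D_j}$ as $C[\underline x_j]$ applied to $|y\rangle$ with $y$ localised near $J_j$, using \eqref{eqn: annuli with worms acting on disc with worms} and the freedom to shrink discs, while the worm-decorated annulus operators stay equivariant for $\cA(J_j)$ by the cited Lemma~5.5 whenever $J_j$ is a common boundary interval. If direct factorisation fails, I would use density and the fact that both sides are determined by their values on vectors $a\Omega\otimes b\Omega$ together with equivariance for algebras on intervals shared by $\partial\mathfrak d(D_j)$ and $\partial D_3$. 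This lets me compare $Y_{\mathfrak d}(|\underline x_1\rangle\otimes|\underline x_2\rangle)$ with $|\mathfrak d_*\underline x_1\,\mathfrak d_*\underline x_2\rangle_D$ by moving the insertions through the intertwiner. Verifying this compatibility carefully, including the case where $\mathfrak d(D_j)$ already touches $\partial D$, is where I expect most of the work.
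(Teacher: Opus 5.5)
Your uniqueness argument, the derivation of the operad law from uniqueness, the reduction to $n=2$ and then to a star inside $D_3$ are all sound and match the paper. So is the existence, via Lemma~\ref{lem: Omega in X Sigma}(iii), of a bounded $\cA(J_1)\otimes\cA(J_2)$-intertwiner $S$ with $S(\Omega_1\otimes\Omega_2)=\Omega_{D_3}$.

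\textbf{The gap.} It is exactly the step you flag, and it is essentially the whole content of the $n=2$ case: proving $S(|\underline x_1\rangle_{D_1}\otimes|\underline x_2\rangle_{D_2})=|\underline x_1\,\underline x_2\rangle_{D_3}$ for arbitrary worms. Neither of your suggested routes closes it.
\begin{enumerate}[(a)]
\item From equivariance and the vacuum value alone you can only evaluate $S$ on vectors $a\Omega_1\otimes b\Omega_2$ with $a\in\cA(J_1)$, $b\in\cA(J_2)$. Writing $|\underline x_j\rangle_{D_j}=C_j[\underline x_j]\,y_j\Omega$, with $C_j$ an annulus around a smaller disc and $y_j$ localised on $J_j$, is circular. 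Equivariance only lets you pull the $y_j$ out, leaving $S(C_1[\underline x_1]\Omega\otimes C_2[\underline x_2]\Omega)=S(|\underline x_1\rangle\otimes|\underline x_2\rangle)$, which is the original unknown.
\item The claim that ``both sides are determined by their values on $a\Omega\otimes b\Omega$ plus equivariance'' treats $|\underline x_1\,\underline x_2\rangle_{D_3}$ as the values of a bounded intertwiner in both variables jointly. That is precisely what the theorem asserts, so it cannot be used as input.
\end{enumerate}

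\textbf{The missing idea.} Identify partial evaluations of $S$, one tensor slot at a time, with operators already known to be bounded, using the annulus around the \emph{other} disc. Work directly with $D_1,D_2\subset D_3$; the enlargements $D_j^+$ are unnecessary.
\begin{enumerate}[(1)]
\item $S(-\otimes\Omega_2)$ and $Y_{D_3\setminus\mathring D_1}$ are both bounded, both $\cA(J_1)$-equivariant, and agree on $\Omega_1$. By Reeh--Schlieder they coincide. Hence $S(|\underline x_1\rangle_{D_1}\otimes\Omega_2)=|\underline x_1\rangle_{D_3}$ by \eqref{eqn: annuli without worms acting on discs with worms}.
\item Place the worms of $D_1$ in the annulus $B:=D_3\setminus\mathring D_2$. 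The operator $B[\underline x_1]$ is bounded, is $\cA(J_2)$-equivariant by \cite[Lem.~5.5]{HenriquesTenerWorms}, and satisfies $B[\underline x_1]\Omega_2=|\underline x_1\rangle_{D_3}$. So it agrees with $S(|\underline x_1\rangle_{D_1}\otimes -)$ on $\Omega_2$, and hence everywhere.
\item Then \eqref{eqn: annuli with worms acting on disc with worms} gives $S(|\underline x_1\rangle\otimes|\underline x_2\rangle)=B[\underline x_1]|\underline x_2\rangle_{D_2}=|\underline x_1\,\underline x_2\rangle_{D_3}$. This is the paper's argument. Composing with $Y_{D\setminus\mathring D_3}$ then covers discs that touch $\partial D$.
\end{enumerate}

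\textbf{A smaller point.} In your induction, a \emph{fixed} pair $\mathfrak d(D_1),\mathfrak d(D_2)$ need not lie in a common disc avoiding the other images. A third disc meeting $\partial D$ in two separate points can separate them. You should instead choose a pair bordering a common component of the complement of the discs; such a pair always exists. Then use the permutation symmetry that uniqueness provides.
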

    
    We note that, by requirement, if $J$ is an interval contained in $\partial (\mathfrak d(D_j)) \cap \partial D$ for some $j$, then $Y_{\mathfrak d}$ is equivariant for $x \in \cA(J)$, in that
    \[
     x\circ Y_{\mathfrak d} = Y_{\mathfrak d} \circ (\mathrm{id} \otimes \cdots \otimes \mathfrak d^{-1}_*(x) \otimes \cdots \otimes \mathrm{id}).
    \]
\begin{proof}

The uniqueness of operators satisfying \eqref{eqn: characterizing property of Yd} is an immediate consequence of the Reeh-Schlieder theorem.
If it is shown that bounded operators satisfying \eqref{eqn: characterizing property of Yd} exist for all $\mathfrak d \in \mathfrak D$, then they are evidently compatible with composition in $\mathfrak D$.
Moreover, since the operad $\mathfrak D$ is generated by $\mathfrak D(0)$ and $\mathfrak D(2)$, along with biholomorphic maps $\varphi:D_1 \to D \in \mathfrak D(1)$, it suffices to demonstrate the existence of operators $Y_{\mathfrak d}$ in these cases.
For a disc $D \in \mathfrak D(0)$, the necessary map is furnished by the vacuum vector $\Omega_D$.
For a biholomorphic map $\varphi:D_1 \to D$, the corresponding implementing unitary $U_\varphi:H_0(D_1) \to H_0(D)$ provided by the conformal net $\cA$ satisfies the necessary condition \eqref{eqn: characterizing property of Yd} (by \eqref{eqn: biholomorphic map acting on disc insertions}).
We are thus left to show existence of the bounded map $Y_{\mathfrak d}$ when $\mathfrak d:D_1 \sqcup D_2 \to D \in \mathfrak D(2)$.
Moreover, we may assume without loss of generality that $D_1,D_2 \subset D$ (i.e. that $\mathfrak d$ is the inclusion of two subdiscs).

    We next claim that it suffices to consider the case when $J_1 := \partial D_1 \cap \partial D$ and $J_2 := \partial D_2 \cap \partial D$ are intervals.
    If not, we may choose a subdisc $D' \subset D$ which contains $D_1$ and $D_2$, and such that $\partial D_1 \cap \partial D'$ and $\partial D_2 \cap \partial D'$ are intervals.
    Let $A = D \setminus \mathring D'$, and let $\mathfrak d'$ be the multidisc inclusion $D_1,D_2 \subseteq D'$.
    By \eqref{eqn: annuli without worms acting on discs with worms}, if the operator $Y_{\mathfrak d'}$ exists, then we have
    \[
    Y_{\mathfrak d} = Y_A Y_{\mathfrak d'}.
    \]
    Hence, by replacing $\mathfrak d$ with $\mathfrak d'$, we may assume that $\partial D_1 \cap \partial D$ and $\partial D_2 \cap \partial D$ are intervals.
    Finally, in light of the compatibility of $Y_{\mathfrak d}$ with biholomorphic maps $\varphi \in \mathfrak D(1)$, we may assume without loss of generality that $D = \bbD$.

    So we are reduced to the case of an inclusion $\mathfrak d:D_1 \sqcup D_2 \hookrightarrow \bbD$ of subdiscs $D_1,D_2 \subset \bbD$, under the assumption that $J_1 := \partial D_1 \cap \partial \bbD$ and $J_2 := \partial D_2 \cap \partial \bbD$ are intervals.
    In this case, the subdiscs $D_1,D_2 \subset \bbD$ form a presentation of a star $\Sigma := \bbD \setminus \relint(D_1 \sqcup D_2)$.
    By Lemma~\ref{lem: Omega in X Sigma}, we have $\Omega \in X(\Sigma)$, which is to say that there exists a bounded map 
    \[
    S_{\mathfrak d}:H_0(D_1) \otimes H_0(D_2) \to H_0(\bbD)
    \]
    which satisfies $S_{\mathfrak d}(\Omega_1 \otimes \Omega_2) = \Omega$ and is equivariant for $\cA(J_1) \otimes \cA(J_2)$.
    We will show that $S_{\mathfrak d}$ satisfies \eqref{eqn: characterizing property of Yd}.

    Let $A$ be the annulus $A := \bbD \setminus \mathring D_1$, so that $Y_A:H_0(D_1) \to H_0(\bbD)$.
    First, observe that for $x \in \cA(J_1)$ we have
    \[
    S_{\mathfrak d}(x\Omega_1 \otimes \Omega_2) = xS_{\mathfrak d}(\Omega_1 \otimes \Omega_2) = x\Omega = xY_A\Omega_1 = Y_A(x\Omega_1),
    \]
    where the first equality is $\cA(J_1)$-equivariance of $S_{\mathfrak d}$, and the last one is the $\cA(J_1)$-equivariance of $Y_A$ \cite[Lem. 4.19]{HenriquesTenerWorms}.
    Since $S_{\mathfrak d}$ and $Y_A$ are bounded, the Reeh-Schlieder theorem implies that 
    \begin{equation}\label{eqn: Sd xi1}
    S_{\mathfrak d}(\xi_1 \otimes \Omega_2) = Y_A\xi_1
    \end{equation}
    for all $\xi_1 \in H_0(D_1)$.
    In particular, let $\underline I_1$ and $\underline x_1$ be as in the statement of the theorem, and consider this identity with $\xi_1 = | \underline x_1 \rangle_{D_1}$.
    Let $B := \bbD \setminus \mathring D_2$.
    We then have
    \[
    S_{\mathfrak d}(| \underline x_1 \rangle_{D_1} \otimes \Omega_2) = Y_A| \underline x_1 \rangle_{D_1} = | \underline x_1 \rangle_{\bbD} = B[\underline x_1]\Omega_2,
    \]
    where the last two equalities are given by \eqref{eqn: annuli without worms acting on discs with worms} and \eqref{eqn: annuli with worms acting on disc with worms}, respectively.
    The bounded operators $S_{\mathfrak d}(| \underline x_1 \rangle_{D_1} \otimes -)$ and $B[\underline x_1]$ agree on $\Omega_2$ and are equivariant for $\cA(J_2)$ (the latter by \cite[Lem. 5.5]{HenriquesTenerWorms}). It follows by the Reeh-Schlieder theorem that the operators $S_{\mathfrak d}(| \underline x_1 \rangle_{D_1} \otimes -)$ and $B[\underline x_1]$ are equal.
    If $\underline x_2$ is as in the statement of the Theorem, we thus have
    \[
    S_{\mathfrak d}(| \underline x_1 \rangle_{D_1} \otimes | \underline x_2 \rangle_{D_2})
    =
    B[\underline x_1]| \underline x_2 \rangle_{D_2}
    =
    | \underline x_1 \, \underline x_2 \rangle_{\bbD}.
    \]
    Hence $S_{\mathfrak d}$ satisfies \eqref{eqn: characterizing property of Yd}, completing the proof.
\end{proof}

\begin{rem}\label{rem: Yd when n is 0 or 1}
    Given a disc $D$, we have $D \in \mathfrak D(0)$, and the map $Y_{D}:\bbC \to H_0(D)$ is given by $1 \mapsto \Omega_D$.
    If $D_1 \subseteq D$ is a subdisc, we have $(D_1 \subseteq D) \in \mathfrak D(1)$.
    Comparing the characterizing property \eqref{eqn: characterizing property of Yd} of $Y_{(D_1 \subseteq D)}$ with \eqref{eqn: annuli without worms acting on discs with worms}, we see that $Y_{(D_1 \subseteq D)} = Y_A$ for $A = D \setminus \mathring D_1$.
\end{rem}

We record the operator constructed in Theorem~\ref{thm: Yd exists} as a definition:

\begin{defn}\label{def: Yd}
Let $\cA$ be a conformal net.
    Let $D$ be a disc, and let $\mathfrak d:D_1 \sqcup \cdots \sqcup D_n \to D \in \mathfrak D$.
    Then we denote by
    \[
    Y_{\mathfrak d}: H_0(D_1) \otimes \cdots \otimes H_0(D_n) \to H_0(D)
    \]
    the unique bounded linear operator with the following property: whenever $\underline I_1,\ldots,\underline I_n$ are collections of disjoint extendable intervals in $D_1,\ldots,D_n$, and $\underline x_j \in \cA(\underline I_j)$, we have 
    \begin{equation}\label{eqn: characterizing property of Yd def}
Y_{\mathfrak d} \big( |\underline x_1\rangle_{D_1} \otimes \cdots \otimes |\underline x_n\rangle_{D_n} \big) = |\mathfrak d_*(\underline x_1) \cdots \mathfrak d_*(\underline x_n) \rangle_D.
    \end{equation}
    We call this the genus zero functorial CFT associated to the vacuum sector of the conformal net $\cA$.
\end{defn}

Let $M$ be a complex manifold, and let $D_1, \ldots, D_n$ and $D$ be discs. 
Recall from Definition~\ref{def: multidisc} that a holomorphic family of multidisc embeddings $D_1 \sqcup \cdots \sqcup D_n \to D$ parametrized by $M$ is a holomorphic map $\mathfrak d:M \times (D_1 \sqcup \cdots \sqcup D_n) \to D$ such that the maps $\mathfrak d_m := \mathfrak d(m, -)$ are multidisc embeddings.

\begin{prop}\label{prop: holomorphic dependence}
    Let $M$ be a finite-dimensional complex manifold without boundary, and let $\mathfrak d_m:D_1 \sqcup \cdots \sqcup D_n \to D$ be a holomorphic family of multidisc embeddings parametrized by $m \in M$.
    Then the assignment $m \mapsto Y_{\mathfrak d_m}$ is holomorphic $M \to B\big(H_0(D_1) \otimes \cdots \otimes H_0(D_n), H_0(D)\big)$, with the codomain given the norm topology%
    \footnote{It is a standard fact that a holomorphic map from a finite-dimensional complex manifold without boundary to $B(H)$ is holomorphic for the norm topology if and only if it is holomorphic for the strong or weak operator topologies - see \cite[Lem. 6.1]{HenriquesTenerIntegratingax} for a proof.}%
    .
\end{prop}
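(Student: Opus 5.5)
The approach is to reduce the claim to two separate ingredients: local uniform boundedness of $m \mapsto Y_{\mathfrak d_m}$, and weak holomorphy on a total set of vectors. A locally bounded $B(H,K)$-valued function on a finite-dimensional complex manifold is holomorphic (in the norm topology, by the footnote and \cite[Lem. 6.1]{HenriquesTenerIntegratingax}) as soon as $m\mapsto \langle Y_{\mathfrak d_m}\xi,\eta\rangle$ is holomorphic for $\xi$, $\eta$ ranging over total subsets. The statement is local in $M$, so I fix $m_0\in M$ and work on a small coordinate polydisc around it. For the total sets I take $\xi = |\underline x_1\rangle_{D_1}\otimes\cdots\otimes|\underline x_n\rangle_{D_n}$; these span a dense subspace by Reeh--Schlieder. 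For $\eta$ I take worm vectors in $H_0(D)$, or simply arbitrary vectors.

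\textbf{Weak holomorphy.} By \eqref{eqn: characterizing property of Yd}, $Y_{\mathfrak d_m}\xi = |(\mathfrak d_m)_*(\underline x_1)\cdots(\mathfrak d_m)_*(\underline x_n)\rangle_D$. I would factor this vector through the annulus calculus, using the standard factorisation in which the operad is generated by $\mathfrak D(0)$, $\mathfrak D(2)$ and biholomorphisms. For $n=2$, write $Y_{\mathfrak d}(\,|\underline x_1\rangle\otimes|\underline x_2\rangle) = B[\underline x_1]|\underline x_2\rangle_{D_2}$ as in the proof of Theorem~\ref{thm: Yd exists}. Then express $B[\underline x_1]$ through $\Ann_c$-operators: conjugating by fixed boundary parametrisations via \eqref{eqn: parametrised vs unparametrised annuli}, it becomes an operator of the form $\underline A_m[\underline x]$ with $\underline A_m$ depending holomorphically on $m$. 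Holomorphy then follows from the holomorphy of the representation of $\Ann_c$ \cite{HenriquesTenerIntegratingax} together with the construction of worm operators in \cite[\S5]{HenriquesTenerWorms}, which is built from $\Ann_c$-elements composed with local algebra elements. The scalar ambiguities in \eqref{eqn: parametrised vs unparametrised annuli} must be fixed holomorphically. This is where I use the canonical lifts of univalent maps to $\Ann_c$ (via $\tUniv\hookrightarrow\tAnn_c$), which vary holomorphically in families and are normalised by the vacuum vector.

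\textbf{Local boundedness.} This is the step I expect to be the main obstacle. By the generation argument it suffices to treat $n=2$ with $\mathfrak d_m$ a family of inclusions of two subdiscs in $D=\bbD$; the reduction steps in the proof of Theorem~\ref{thm: Yd exists} (composing with $Y_A$ and with the unitaries $U_\varphi$) can be carried out holomorphically in $m$ using holomorphic families of univalent maps. The difficulty is that when $\mathfrak d_{m_0}(D_j)$ touches $\partial D$, nearby embeddings need not factor through $\mathfrak d_{m_0}$.

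My first attempt would be to write, for $m$ near $m_0$,
\[
\mathfrak d_m = \psi_m\circ \mathfrak d_{m_0}\circ(g_{1,m}\sqcup g_{2,m}),
\]
with $\psi_m$ and $g_{j,m}$ holomorphic families of annuli, i.e.\ elements of $\Ann$ possibly with negative thickness, realised by diffeomorphisms of boundary circles times univalent maps. Then
\[
Y_{\mathfrak d_m} = \underline\psi_m\, Y_{\mathfrak d_{m_0}}\,(\underline g_{1,m}\otimes \underline g_{2,m})
\]
up to holomorphically chosen scalars, and each factor is locally bounded by \cite[Thm. 6.4]{HenriquesTenerIntegratingax}. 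If the decomposition with genuinely diffeomorphism-valued pieces fails, I would fall back on the star spaces. I would show that the operators $S_{\mathfrak d_m}$ obtained in Lemma~\ref{lem: Omega in X Sigma}(iii) by averaging $x_t$ can be chosen with norms controlled by a fixed star $\Sigma'$ sitting inside all the stars $\Sigma_m$ for $m$ near $m_0$. Combining Lemma~\ref{lem: Omega in X Sigma}(ii) and \eqref{eq: YYxYY} with the uniform bounds on the $Y$-operators would then give a uniform norm bound, and uniqueness identifies these operators with $Y_{\mathfrak d_m}$.
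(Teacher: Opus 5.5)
\textbf{There is a genuine gap.} You correctly identify local boundedness as the crux, but you do not establish it, and the weak-holomorphy step is only asserted.

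\emph{The first attempt.} The factorisation $\mathfrak d_m=\psi_m\circ\mathfrak d_{m_0}\circ(g_{1,m}\sqcup g_{2,m})$, with $\psi_m,g_{j,m}$ genuine elements of $\Ann$, fails in general. It would give a holomorphic embedding of $\cR_{m_0}$ into $\cR_m$, compatible with the boundary circles. Take instead a family that enlarges one inner disc, so that $\mathring\cR_m\subsetneq\mathring\cR_{m_0}$ for some $m$ arbitrarily close to $m_0$. Then, by Schwarz--Pick, the hyperbolic lengths of the boundary-parallel geodesics would have to weakly decrease from $\cR_{m_0}$ to $\cR_m$ and also strictly increase, which is a contradiction. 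Annuli of ``negative thickness'' are not elements of $\Ann$. The operators they would require are inverses of operators such as $r^{L_0}$, hence unbounded, so they give no norm bound.

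\emph{The fallback.} The star-space argument would need uniform control, in $m$ and in the rotation parameter $t$, of three things: the presentations, the scalars of Lemma~\ref{lem: XSigma doesnt depend on presentation}, and the reduction steps of Theorem~\ref{thm: Yd exists}. None of this is supplied.

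\emph{Weak holomorphy.} You need holomorphy in $m$ of $B_m[\underline x_1]$, where the worms $\mathfrak d_m(\underline I_1)$ move relative to a moving hole. Neither \cite{HenriquesTenerIntegratingax} nor \cite[\S5]{HenriquesTenerWorms} gives you this.

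\textbf{The missing idea.} You do not need to factor through $\mathfrak d_{m_0}$ at all, nor to treat boundedness and holomorphy separately.

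Near $m_0$, choose \emph{fixed} pairwise disjoint discs $D_j'\subset D$ with $\mathfrak d_m(D_j)\subset D_j'$ for all $m$ in a neighbourhood. This is possible by compactness even when $\mathfrak d_{m_0}(D_j)$ meets $\partial D$, because the $D_j'$ may share boundary arcs with $D$. The operad $\mathfrak D$ allows this, and Theorem~\ref{thm: Yd exists} still gives a bounded operator $Y_{\mathfrak d'}$ for the fixed inclusion $\mathfrak d':D_1'\sqcup\cdots\sqcup D_n'\to D$. So your worry about boundary contact is exactly what the boundary-overlap allowance resolves.

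Set $\mathfrak a_{j,m}:=\mathfrak d_m|_{D_j}:D_j\to D_j'$. Then
$Y_{\mathfrak d_m}=Y_{\mathfrak d'}\circ(Y_{\mathfrak a_{1,m}}\otimes\cdots\otimes Y_{\mathfrak a_{n,m}})$,
so all dependence on $m$ sits in $n=1$ families.

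After identifying $D_j\cong\bbD\cong D_j'$, these are holomorphic families of univalent maps. Their standard lifts to $\Ann_c$ form a holomorphic family: local holomorphic lifts exist, and the discrepancy from the standard lift is recovered as a vacuum expectation, which is the normalisation you allude to. So they are norm-holomorphic by \cite[Thm.~6.4]{HenriquesTenerIntegratingax}, and they coincide with $Y_{\mathfrak a_{j,m}}$ by \cite[Lem.~5.4]{HenriquesTenerWorms}.

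This gives norm-holomorphy, and hence local boundedness, in one stroke. It needs no density argument, no star spaces and no moving worms.
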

\begin{proof}
    The statement is vacuously true in the case $n=0$ (i.e. in the absence of embedded discs).
    We consider first the case $n=1$.
    By fixing identifications $D_1 \cong \bbD \cong D$, we may assume without loss of generality that $D_1=D=\bbD$, in which case $\mathfrak d_m:\bbD \to \bbD$ is a holomorphic family of univalent self-maps of the standard disc.
    Let $A_m$ be the corresponding family of univalent annuli, equipped with their standard lifts $\underline A_m \in \Ann_c$ and regarded as operators on the vacuum Hilbert space $H_0$ (as described in Section~\ref{sec: annuli}).
    The family $\underline A_m$ is holomorphic in the sense of \cite[\S5]{HenriquesTener24ax},  and the corresponding operators depend holomorphically on $M$ by \cite[Thm. 6.4]{HenriquesTenerIntegratingax}%
    \footnote{The claim that the family $\underline A_m$ is holomorphic is not explicit in the references, but an argument is as follows. The family $A_m \in \Ann$ is visibly holomorphic from the definition of the complex diffeology on $\Ann$ \cite[Def. 2.6]{HenriquesTener24ax}. By \cite[Prop. 5.11]{HenriquesTener24ax}, there exist local holomorphic lifts $\underline{{\tilde A}}_m \in \Ann_c$, which differ from the standard lifts $\underline A_m$ by a locally defined function $f:M \to \bbC^\times$. Since the standard lift fixes the vacuum vector, the function $f$ may be recovered as the vacuum expectation of the family $\underline{{\tilde A}}_m$ acting on $H_0$, and is therefore holomorphic. It follows that $\underline A_m$ is a holomorphic family.}%
    .
    By \cite[Lem. 5.4]{HenriquesTenerWorms}, for each $m \in M$ the operators $\underline A_m$ satisfy the defining property \eqref{eqn: characterizing property of Yd def} of $Y_{\mathfrak d_m}$, hence $\underline A_m=Y_{\mathfrak d_m}$. This completes the proof when $n=1$.

    We now consider $n > 1$.
    By working locally in $M$, we may assume without loss of generality that there exist disjoint discs $D_1', \ldots, D_n' \subset D$ such that $\mathfrak d_m(D_j) \subset D_j'$ for all $m \in M$.
    Let $\mathfrak d':D_1' \sqcup \cdots \sqcup D_n' \to D$ be the canonical inclusion, and let $\mathfrak a_{j,m}:D_j \to D_j'$ be the restriction of $\mathfrak d_m$, regarded as a holomorphic family of disc embeddings, so that
    \[
    Y_{\mathfrak d_m} = Y_{\mathfrak d'} \circ \big( Y_{\mathfrak a_{1,m}} \otimes \cdots \otimes Y_{\mathfrak a_{n,m}}\big).
    \]
    The right hand side depends holomorphically on $m$ by the case $n=1$ considered above. This completes the proof.\qedhere
    
\end{proof}

\subsection*{Operators assigned to cobordisms}

By construction, the operators $Y_{\mathfrak d}$ are  natural with respect to isomorphisms in the following sense.
Suppose we have multidisc embeddings $\mathfrak d:D_1 \sqcup \cdots \sqcup D_n \to D$ and $\mathfrak d': D_1' \sqcup \cdots \sqcup D_n' \to D'$.
Suppose there exists a biholomorphic map $\varphi:D \to D'$ such that $\varphi(\mathfrak d(D_j)) = \mathfrak d'(D_j')$ for all $j=1, \ldots, n$.
Let $\mathfrak d_j:D_j \to \mathfrak d(D_j)$ be the biholomorphic map $\mathfrak d_j := \mathfrak d|_{D_j}$, and similarly let $\mathfrak d_j' := \mathfrak d'|_{D_j'}$.
Finally, let $\varphi_j:={\mathfrak{d}_j'}^{-1} \circ \varphi|_{\mathfrak d(D_j)} \circ \mathfrak d_j:D_j \to D_j'$.
Then
\[
Y_{\mathfrak d'} = U_{\varphi}Y_{\mathfrak d}(U_{\varphi_1}^* \otimes \cdots \otimes U_{\varphi_n}^*),
\]
where $U_{\varphi_j}:H_0(D_j) \to H_0(D_j')$ are the unitary operators associated to the biholomorphic maps $\varphi_j$, and similarly $U_\varphi:H_0(D) \to H_0(D')$ is the unitary operator associated to $\varphi$.
We will show below, in Proposition~\ref{prop: stars are well behaved}, that the operators $Y_{\mathfrak d}$ are also natural in a weaker way, with respect to a weaker form of isomorphism.

Let $\mathfrak d, \mathfrak d' \in \mathfrak D(n)$ be multidisc embeddings, as above.
We can then form the cobordisms
\begin{equation}\label{eqn: R from d}
\cR := D \setminus (\mathfrak d(\mathring D_1) \cup \cdots \cup \mathfrak d(\mathring D_n)), \qquad
\cR' := D' \setminus (\mathfrak d'(\mathring D_1') \cup \cdots \cup \mathfrak d'(\mathring D_n')).
\end{equation}
We will show below that the operators $Y_{\mathfrak d}$ are compatible with isomorphisms $\cR \cong \cR'$ (which do not necessarily extend holomorphically to the discs $\mathfrak d(D_j)$), with the caveat that a scalar anomaly arises in the relationship between $Y_{\mathfrak d}$ and $Y_{\mathfrak d'}$.
When $D_i\subset\mathring D$ and $D_i'\subset \mathring D'$, then $\cR$ and $\cR'$ are Riemann surfaces with boundary, and we wish to consider isomorphisms $\cR \cong \cR'$ which are holomorphic in the interior and smooth up to the boundary.
For general inclusions, however, there may be overlap between the discs $D_i$ and $D$, and the cobordism $\cR$ will then fail to be a manifold with boundary.
In this case, the appropriate notion of isomorphism between $\cR$ and $\cR'$ is as follows:

\begin{defn}\label{def: iso of R}
    Let $\mathfrak d:D_1 \sqcup \cdots \sqcup D_n \to D \in \mathfrak D$, $\mathfrak d' : D_1' \sqcup \cdots  \sqcup D_n' \to D' \in \mathfrak D$, $\cR$, and $\cR'$ be as in \eqref{eqn: R from d}.
    An \emph{isomorphism} $\varphi:\cR \to \cR'$ is a homeomorphism which is holomorphic on $\mathring \cR$, maps $\partial D$ to $\partial D'$, maps $\partial (\mathfrak d(D_j))$ to $\partial (\mathfrak d'(D_j'))$, and such that $\varphi_j:={\mathfrak d'}^{-1} \circ \varphi \circ \mathfrak d|_{\partial D_j}:\partial D_j \to \partial D_j'$ and $\varphi_0:=\varphi|_{\partial D}:\partial D \to \partial D'$ are diffeomorphisms.
\end{defn}

Given an isomorphism $\varphi:\cR \to \cR'$, with associated diffeomorphisms $\varphi_0, \varphi_1, \ldots, \varphi_n$ as above, we have implementing unitary operators $U_{\varphi_0}:H_0(D) \to H_0(D')$ and $U_{\varphi_j}:H_0(D_j) \to H_0(D_j')$, for $j=1,\ldots,n$ (which \emph{do not}, in general, map the vacuum vector to the vacuum vector). Such unitaries are well-defined up to a scalar of modulus one.

\begin{prop}\label{prop: stars are well behaved}
     Let $\mathfrak d:D_1 \sqcup \cdots \sqcup D_n \to D \in \mathfrak D$ and $\mathfrak d': D_1' \sqcup \cdots  \sqcup D_n' \to D' \in \mathfrak D$ be multidisc embeddings.
    Let
    \[
    \cR := D \setminus (\mathfrak d(\mathring D_1) \cup \cdots \cup \mathfrak d(\mathring D_n)), \qquad
\cR' := D' \setminus (\mathfrak d'(\mathring D_1') \cup \cdots \cup \mathfrak d'(\mathring D_n')),
\]
    and suppose that $\varphi:\cR \to \cR'$ is an isomorphism (Definition~\ref{def: iso of R}).
     Let $\varphi_j:\partial D_j \to \partial D_j'$  and $\varphi_0:\partial D \to \partial D'$ be the induced diffeomorphisms and let $U_{\varphi_j}$ be implementing unitaries.
     Then
     \[
     Y_{\mathfrak d'} = c \cdot U_{\varphi_0}Y_{\mathfrak d}(U_{\varphi_1}^* \otimes \cdots \otimes U_{\varphi_n}^*)
     \]
     for some scalar  $c \in \bbC^\times$.
\end{prop}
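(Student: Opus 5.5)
The plan is to reduce to the case where all discs are strictly nested, and there use the annulus covariance \eqref{eqn: comparison of unparametrised annuli under iso} together with the characterising property \eqref{eqn: characterizing property of Yd def} and Reeh--Schlieder.

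First, the reduction. Extend $\varphi$ across a thin collar outside $\partial D$. Concretely, choose a disc $\tilde D \supset D$ with $D \subset \mathring{\tilde D}$, glue to $D'$ along $\varphi_0$ an annulus isomorphic to $\tilde D\setminus \mathring D$, and call the result $\tilde D'$. The gluing uses conformal welding as in \cite{HenriquesTener24ax}, so $\tilde D'$ is again a disc and $\varphi$ extends to an isomorphism $\tilde\cR \to \tilde\cR'$. Do the same inside each $\mathfrak d(D_j)$: choose subdiscs $E_j \subset \mathfrak d(\mathring D_j)$ and transport the annuli $\mathfrak d(D_j)\setminus \mathring E_j$ through $\varphi_j$ to get $E_j'$.

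After this, the enlarged surfaces are honest Riemann surfaces with boundary, and the extended $\varphi$ is holomorphic near the original boundary circles. Operadic composition expresses $Y_{\tilde{\mathfrak d}}$ as $Y_{\tilde A}\circ Y_{\mathfrak d}\circ(\bigotimes Y_{A_j})$, and similarly on the primed side. The annuli relation \eqref{eqn: comparison of unparametrised annuli under iso} says $U_{\tilde\varphi}Y_{\tilde A}U_{\varphi_0}^* = c\, Y_{\tilde A'}$, and similarly for the $A_j$. Since the annulus operators $Y$ are injective with dense image \cite[Lem 4.15]{HenriquesTenerWorms}, the claim for $(\mathfrak d,\mathfrak d')$ follows from the claim for the enlarged pair, after cancelling these factors on dense domains.

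In the enlarged situation, $\varphi$ is holomorphic on a neighbourhood of the boundary, so it extends holomorphically slightly past $\partial E_j$ and $\partial\tilde D$. Now fix worm insertions $\underline x_j$ on intervals in $E_j$. Pick a collar annulus $C_j\subset \tilde\cR$ around $\partial E_j$, so that $\mathfrak d(D_j)\supset C_j\cup E_j$. The insertion $|\underline x_j\rangle_{E_j}$ is pushed into the larger disc by $Y_{C_j}$. Using the characterising property, $Y_{\tilde{\mathfrak d}}(\bigotimes_j|\underline x_j\rangle)$ is the insertion $|\cdots\rangle_{\tilde D}$.

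Cover $\tilde\cR$ by a disc $\hat D$ together with the annuli $C_j$, in such a way that every worm insertion supported in $\tilde \cR$ can be moved by the isomorphism. The key identity to establish is
\[
U_{\tilde\varphi_0}\,|\underline y\rangle_{\tilde D} = c\, |\varphi_*(\underline y)\rangle_{\tilde D'}
\]
for insertions $\underline y$ of the form $\mathfrak d_*(\underline x_1)\cdots$ whose intervals lie in the images of the $E_j$, with $U_{\varphi_j}$ absorbed on the incoming side. I would prove this by writing the vectors as $A[\cdots]$ applied to vacua and using \eqref{eqn: comparison of unparametrised annuli under iso} together with \eqref{eqn: annuli with worms acting on disc with worms}. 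Having this identity, both sides of the desired equation agree on all product worm vectors, up to one scalar that is independent of the $x$'s. Reeh--Schlieder density and boundedness then conclude.

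The main obstacle is that step: comparing $U_{\varphi_0}$ applied to a disc vector with multiple inserted regions when $\varphi$ does not extend holomorphically into the holes. The $n=1$ case is exactly \eqref{eqn: comparison of unparametrised annuli under iso}.

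For $n\ge2$, I would first try to reduce to $n=2$ via the operad generation argument from the proof of Theorem~\ref{thm: Yd exists}. I would then treat the two-hole case by equivariance. The operator $U_{\varphi_0}^*Y_{\mathfrak d'}(U_{\varphi_1}\otimes U_{\varphi_2})$ is $\cA(J_1)\otimes\cA(J_2)$-equivariant whenever the discs touch $\partial D$. Handling general position requires the enlargement trick above: glue thin annuli so that the holes touch the boundary along intervals, i.e. arrange a four-pointed star, and then use the uniqueness up to scalar coming from Lemma~\ref{lem: XSigma doesnt depend on presentation} together with irreducibility of $\cA(J_1)\otimes\cA(J_2)$ commutants on the relevant Hom spaces. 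Whether the scalar is genuinely forced there is the point I am least sure of.
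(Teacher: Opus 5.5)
There is a genuine gap in the two-hole case, which is the entire content of the statement and exactly the point you flag. Equivariance cannot force the scalar. Both $Y_{\mathfrak d}$ and $T:=U_{\varphi_0}^*Y_{\mathfrak d'}(U_{\varphi_1}\otimes U_{\varphi_2})$ lie in $\Hom_{\cA(J_1)\otimes\cA(J_2)}(H_0(D_1)\otimes H_0(D_2),H_0(D))$. However, evaluation at $\Omega_1\otimes\Omega_2$ is injective on this space (cyclicity) with dense image (as in Lemma~\ref{lem: Omega in X Sigma}(i)). So the space is infinite-dimensional, and an element is determined only by its value on $\Omega_1\otimes\Omega_2$.

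In Theorem~\ref{thm: Yd exists} that value is $\Omega$ by construction. Here $U_{\varphi_j}\Omega_j$ is not a vacuum vector, and showing $T(\Omega_1\otimes\Omega_2)\in\bbC\,\Omega_D$ is precisely what has to be proved. Lemma~\ref{lem: XSigma doesnt depend on presentation} only compares presentations related by holomorphic inclusions, so it gives nothing here.

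The same obstruction makes your ``key identity'' ill-posed. The intervals of $\underline y$ lie in the holes, where $\varphi$ is undefined, and $U_{\varphi_j}|\underline x_j\rangle_{D_j}$ is not a worm vector unless $\varphi_j$ extends holomorphically over $D_j$.

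The collar reduction does not remove this either. A disc $E_j'\subset\mathfrak d'(\mathring D_j')$ with $\mathfrak d'(D_j')\setminus\mathring E_j'\cong\mathfrak d(D_j)\setminus\mathring E_j$ compatibly with $\varphi_j$ exists only if $\varphi$ extends holomorphically across $\partial\mathfrak d(D_j)$ (glue the two maps and use removability of smooth arcs), which fails in general. If you weld instead, the enlarged configurations are simply biholomorphic, and comparing the enlarged primed one with $\mathfrak d'$ is the proposition again.

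The missing idea is to factor $\varphi$ so that each factor extends over one hole:
\begin{enumerate}
\item Weld $D_2'$ to $D\setminus\mathring D_2$ along $\varphi_2$. This gives a disc $D''$ with subdiscs $D_1$ and $D_2'$, and $\varphi$ factors as $\cR\to\cR''\to\cR'$ with $\cR''=D''\setminus(\mathring D_1\cup\mathring D_2')$. Each factor extends biholomorphically over one of the two holes.
\item Suppose $\varphi_1$ extends to $D_1\cong D_1'$. Then $\varphi$ extends to an isomorphism of annuli $A=D\setminus\mathring D_2\cong A'=D'\setminus\mathring D_2'$. One has $Y_{\mathfrak d}(|\underline x_1\rangle_{D_1}\otimes -)=A[\underline x_1]$ and $U_{\varphi_1}|\underline x_1\rangle_{D_1}=|\varphi_*(\underline x_1)\rangle_{D_1'}$.
\item The scalar in \eqref{eqn: comparison of unparametrised annuli under iso} is independent of the worm insertions. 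It therefore gives $U_{\varphi_0}^*Y_{\mathfrak d'}(U_{\varphi_1}|\underline x_1\rangle\otimes U_{\varphi_2}(-))=c\,Y_{\mathfrak d}(|\underline x_1\rangle\otimes -)$ with a single $c$ for all $\underline x_1$.
\item Reeh--Schlieder in the first tensor factor concludes.
\end{enumerate}
Your instinct to pass through $A[\cdots]$ and \eqref{eqn: comparison of unparametrised annuli under iso} is right, but it only becomes available after this factorization. No enlargement or four-pointed-star argument is needed.
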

\begin{proof}
    It suffices to consider $\mathfrak d \in \mathfrak D(n)$ for $n=0,1,2$, as these embeddings generate $\mathfrak D$.
    We assume without loss of generality that the discs $D_j$ and $D_j'$ are subdiscs of $D$ and $D'$, respectively, and suppress the embeddings $\mathfrak d$.
    The case $n=0$ is immediate, as in this case $\varphi:D \to D'$ is biholomorphic.
    When $n=1$, as noted in Remark~\ref{rem: Yd when n is 0 or 1},
    we have $Y_{\mathfrak d} = Y_A$ for $A=D\setminus \mathring D_1$. Similarly, $Y_{\mathfrak d'} = Y_{A'}$ for $A'=D' \setminus \mathring D_1'$, and the desired result is a special case of~\eqref{eqn: comparison of unparametrised annuli under iso}.
    
    We now consider the case $n=2$.
    We claim that it suffices to consider the case where $\varphi_j:\partial D_j \to \partial D_j'$ extends to a biholomorphic map $D_j \cong D_j'$ for \emph{either} $j=1$ or $j=2$.
    Indeed, we may construct an intermediate configuration of discs by welding the disc $D_2'$ to the annulus $D \setminus \mathring D_2$ along the boundary identification $\varphi_2$ (see \cite[\S3.2]{HenriquesTener24ax} for a detailed discussion of conformal welding between annuli and discs). This produces a disc $D''$ with subdiscs isomorphic to $D_1$ and $D_2'$, and we let $\cR'' := D'' \setminus (\mathring D_1 \cup  \mathring D_2')$.
    We may then factor $\varphi$ as a composite of isomorphisms $\cR \to \cR'' \to \cR'$, each of which extends to an isomorphism of one of the filling discs.
    It suffices to establish the desired conclusion for each of these intermediate isomorphisms.

    We may thus assume without loss of generality that $\varphi_1$ extends to a biholomorphic map $D_1 \cong D_1'$.
    That is, if we set $A = D \setminus \mathring D_2 = \cR \cup D_1$ and $A' = D' \setminus \mathring D_2' = \cR' \cup D_1'$, then $\varphi$ extends to an isomorphism $A \cong A'$, which we again denote by $\varphi$.
    Let $\underline I_1 \subset D_1$ be a family of disjoint extendable intervals, let $\underline x_1 \in \cA(\underline I_1)$, and let $\xi_1 = | \underline x_1 \rangle_{D_1}$.
    By \eqref{eqn: annuli with worms acting on disc with worms}, we have 
    \begin{equation}\label{eqn: Yd one insertion}
    Y_{\mathfrak d}(\xi_1 \otimes -) = A[\underline x_1],
    \end{equation}
    and similarly if $\underline I_1'$ is a family of disjoint extendable intervals in $D_1'$, $\underline x_1' \in \cA(\underline I_1')$, and $\xi_1' = |\underline x_1'\rangle_{D_1'}$, then
    \begin{equation}\label{eqn: Yd prime one insertion}
    Y_{\mathfrak d'}(\xi_1' \otimes -) = A'[\underline x_1'].
    \end{equation}
    Since $U_{\varphi_1}| \underline x_1 \rangle_{D_1} = |\varphi_*(\underline x_1) \rangle_{D_1'}$, it follows that
    \[
    U_{\varphi_0}^*Y_{\mathfrak d'}(U_{\varphi_1}\xi_1 \otimes U_{\varphi_2}-) = U_{\varphi_0}^*A'[\varphi_*(\underline x_1)]U_{\varphi_2} = c \cdot A[\underline x_1] = c\cdot Y_{\mathfrak d}(\xi_1 \otimes -)
    \]
    for some scalar $c \in \bbC^\times$, where the first equality is \eqref{eqn: Yd prime one insertion}, the second is \eqref{eqn: comparison of unparametrised annuli under iso}, and the last one is \eqref{eqn: Yd one insertion}.
    It follows from the Reeh-Schlieder theorem that
    \[
     U_{\varphi_0}^* Y_{\mathfrak d'}(U_{\varphi_1} \otimes U_{\varphi_2})=c \cdot Y_{\mathfrak d},
    \]
    and the result follows.
\end{proof}

\begin{defn}\label{def: YR}
     Let $\mathfrak d:D_1 \sqcup \cdots \sqcup D_n \to D \in \mathfrak D$ be a multidisc embedding and let
    \[
    \cR := D \setminus (\mathfrak d(\mathring D_1) \cup \cdots \cup \mathfrak d(\mathring D_n))
    \]
    be the associated cobordism.
    Equip the boundary components of $\cR$ with orientation preserving diffeomorphisms $\gamma_j: S^1 \to \partial D_j$ and $\gamma_0:S^1 \to \partial D$, and let $U_{\gamma_j}:H_0 \to H_0(D_j)$ and $U_{\gamma_0}:H_0 \to H_0(D)$ be corresponding implementing unitaries (unique up to unimodular scalar).
    Then we define the operator
    \[
    Y_{\cR}:H_0^{\otimes n} \to H_0
    \]
    by
    \[
    Y_{\cR} := U_{\gamma_0}^* Y_{\mathfrak d}(U_{\gamma_1} \otimes \cdots \otimes U_{\gamma_n}).
    \]
\end{defn}

The operator $Y_\cR$ is well-defined up to a scalar.
More precisely:

\begin{cor}\label{cor: YR well defined}
 Let $\mathfrak d:D_1 \sqcup \cdots \sqcup D_n \to D \in \mathfrak D$ and $\mathfrak d': D_1' \sqcup \cdots  \sqcup D_n' \to D' \in \mathfrak D$ be multidisc embeddings, and 
    let
    \[
    \cR := D \setminus (\mathfrak d(\mathring D_1) \cup \cdots \cup \mathfrak d(\mathring D_n)), \qquad
\cR' := D' \setminus (\mathfrak d'(\mathring D_1') \cup \cdots \cup \mathfrak d'(\mathring D_n'))
\]
be the associated cobordisms.
Suppose that the boundary components of $\cR$ and $\cR'$ are respectively equipped with parametrizations $\gamma_j$ and $\gamma_j'$, and that there exists an isomorphism $\varphi:\cR\to \cR'$ (Definition~\ref{def: iso of R}) such that $\varphi \circ \mathfrak d \circ \gamma_j = \mathfrak d' \circ \gamma_j'$ and $\varphi \circ \gamma_0 = \gamma_0'$. 
Then $$Y_{\cR} = c \cdot Y_{\cR'}$$ for some $c \in \bbC^\times$.
\end{cor}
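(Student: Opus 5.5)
The plan is to deduce the corollary directly from Proposition~\ref{prop: stars are well behaved}, applied to the given isomorphism $\varphi:\cR\to\cR'$, together with the fact that implementing unitaries compose up to phase.

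First, identify the induced boundary diffeomorphisms of $\varphi$ in terms of the parametrizations. By Definition~\ref{def: iso of R}, $\varphi_j = {\mathfrak d'}^{-1}\circ\varphi\circ\mathfrak d|_{\partial D_j}$ and $\varphi_0=\varphi|_{\partial D}$. The hypotheses $\varphi\circ\mathfrak d\circ\gamma_j=\mathfrak d'\circ\gamma_j'$ and $\varphi\circ\gamma_0=\gamma_0'$ then give
\[
\varphi_j\circ\gamma_j=\gamma_j' \quad (j=1,\ldots,n), \qquad \varphi_0\circ\gamma_0=\gamma_0'.
\]
Next, compare implementing unitaries. The unitary $U_{\varphi_j}U_{\gamma_j}:H_0\to H_0(D_j')$ implements $\varphi_j\circ\gamma_j=\gamma_j'$: conjugating $a\in\cA(I)$ gives $(\varphi_j)_*(\gamma_j)_*(a)=(\gamma_j')_*(a)$. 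Since implementing unitaries are unique up to phase, as recalled around \eqref{eqn: unitary implementing diffeomorphism}, we get $U_{\gamma_j'}=\lambda_j U_{\varphi_j}U_{\gamma_j}$ with $|\lambda_j|=1$. The same argument gives $U_{\gamma_0'}=\lambda_0U_{\varphi_0}U_{\gamma_0}$.

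Now substitute into Definition~\ref{def: YR} and use Proposition~\ref{prop: stars are well behaved}, which gives $Y_{\mathfrak d'}=c\,U_{\varphi_0}Y_{\mathfrak d}(U_{\varphi_1}^*\otimes\cdots\otimes U_{\varphi_n}^*)$. This yields
\[
Y_{\cR'} = \bar\lambda_0\lambda_1\cdots\lambda_n\, U_{\gamma_0}^*U_{\varphi_0}^*\, Y_{\mathfrak d'}\,(U_{\varphi_1}U_{\gamma_1}\otimes\cdots\otimes U_{\varphi_n}U_{\gamma_n}) = c\,\bar\lambda_0\lambda_1\cdots\lambda_n\, Y_{\cR}.
\]
Here $U_{\varphi_0}^*U_{\varphi_0}$ and the factors $U_{\varphi_j}^*U_{\varphi_j}$ cancel because these operators are unitary. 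The resulting scalar is nonzero, which gives the claim with constant $(c\bar\lambda_0\prod\lambda_j)^{-1}$.

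There is no real obstacle here, since the analytic content is carried entirely by Proposition~\ref{prop: stars are well behaved}. The one point that needs care is the uniqueness up to phase of implementing unitaries between different discs, which was stated in Section~\ref{sec: conformal nets}. This uniqueness follows from irreducibility of the joint action of the local algebras $\cA(I)$, $I\subset\partial D$, on $H_0(D)$. One should also note that the scalar ambiguity in the choice of each $U_{\gamma_j}$ is absorbed into the constant.
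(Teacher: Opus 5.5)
Your proposal is correct and follows the paper's own proof. Like the paper, you use $\varphi_j\circ\gamma_j=\gamma_j'$ and $\varphi_0\circ\gamma_0=\gamma_0'$ to identify $U_{\gamma_j'}$ with $U_{\varphi_j}U_{\gamma_j}$ up to phase, then substitute Proposition~\ref{prop: stars are well behaved} into Definition~\ref{def: YR}. The only difference is that you track the unimodular factors $\lambda_j$ explicitly, where the paper absorbs them silently into the constant $c$.
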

\begin{proof}
    Let $U_{\varphi_j}:H_0(D_j) \to H_0(D_j')$ be the unitary induced by $\varphi_j:=\varphi|_{\partial D_j}$. 
    Since $\varphi_j \circ \gamma_j = \gamma_j'$, we have
    \(
    Y_\cR = U_{\gamma_0}^* Y_{\mathfrak d}(U_{\gamma_1} \otimes \cdots \otimes U_{\gamma_n}) 
    =c \cdot U_{\gamma_0'}^* Y_{\mathfrak d'}(U_{\gamma_1'} \otimes \cdots \otimes U_{\gamma_n'})
    = c \cdot Y_{\cR'}
    \),
    where the second equality holds by Proposition~\ref{prop: stars are well behaved}.
\end{proof}

Finally, we will observe that the maps $Y_\cR$ are compatible with (de)compositions of cobordisms.

\begin{defn}\label{def: decomposition of R}
Suppose $\mathfrak d_1, \mathfrak d_2 \in \mathfrak D$ and the composition $\mathfrak d:=\mathfrak d_1 \circ_i \mathfrak d_2$ exists (i.e. the $i$th incoming disc of $\mathfrak d_1$ is the same as the outgoing disc of $\mathfrak d_2$).
Let $\cR$, $\cR_1$, $\cR_2$ be the corresponding cobordisms and assume that they are equipped with boundary parametrizations which are compatible in the obvious sense.
In that case, we say that $\cR$ \emph{decomposes} as
\[
\cR = \cR_1 \circ_i \cR_2.
\]
\end{defn}


\begin{cor}\label{cor: YR compatible with decomposition}
    Let $\mathfrak d=\mathfrak d_1 \circ_i \mathfrak d_2$ be as above, and let
    $\cR$, $\cR_1$, and $\cR_2$ be the corresponding cobordisms,
    equipped with compatible boundary parametrizations so that 
    $\cR = \cR_1 \circ_i \cR_2$.
    Then
    \[
    Y_\cR = c \cdot Y_{\cR_1} \circ_i Y_{\cR_2}
    \]
    for some scalar $c \in \bbC^\times$.
\end{cor}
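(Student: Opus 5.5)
The plan is to unwind the definitions and reduce the statement to the operadic composition law of Theorem~\ref{thm: Yd exists}, keeping track of how implementing unitaries attached to the shared boundary circle cancel. Write $\mathfrak d_1 : D_1 \sqcup \cdots \sqcup D_k \to D$ and $\mathfrak d_2 : E_1 \sqcup \cdots \sqcup E_m \to D_i$, so that $\mathfrak d = \mathfrak d_1 \circ_i \mathfrak d_2$ has incoming discs $D_1,\ldots,D_{i-1},E_1,\ldots,E_m,D_{i+1},\ldots,D_k$ and outgoing disc $D$. Compatibility of boundary parametrizations means the following. The parametrization $\gamma_0$ of $\partial D$ is shared by $\cR$ and $\cR_1$. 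The parametrizations $\gamma_j$ of $\partial D_j$ for $j\neq i$ are shared by $\cR$ and $\cR_1$. The parametrizations of $\partial E_l$ are shared by $\cR$ and $\cR_2$. Finally, the parametrization $\gamma_i : S^1 \to \partial D_i$ used as an incoming parametrization for $\cR_1$ coincides with the one used as the outgoing parametrization of $\cR_2$.

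With this notation, Definition~\ref{def: YR} gives
\[
Y_{\cR_1} \circ_i Y_{\cR_2}
= U_{\gamma_0}^* Y_{\mathfrak d_1}\big(U_{\gamma_1}\otimes\cdots\otimes U_{\gamma_i}U_{\gamma_i}^*Y_{\mathfrak d_2}(U_{\epsilon_1}\otimes\cdots\otimes U_{\epsilon_m})\otimes\cdots\otimes U_{\gamma_k}\big).
\]
Here the $U_{\epsilon_l}$ implement the parametrizations of the $\partial E_l$. I will choose the same implementing unitary $U_{\gamma_i}$ in both factors. It is determined only up to a unimodular phase, but making the same choice is allowed, and any other choice changes the result by a scalar. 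With this choice $U_{\gamma_i}U_{\gamma_i}^* = \mathrm{id}_{H_0(D_i)}$, and the expression becomes
\[
U_{\gamma_0}^* \big(Y_{\mathfrak d_1}\circ_i Y_{\mathfrak d_2}\big)(U_{\gamma_1}\otimes\cdots\otimes U_{\epsilon_1}\otimes\cdots\otimes U_{\epsilon_m}\otimes\cdots\otimes U_{\gamma_k}).
\]

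By Theorem~\ref{thm: Yd exists} the maps $Y$ form an algebra over $\mathfrak D$, so $Y_{\mathfrak d_1}\circ_i Y_{\mathfrak d_2} = Y_{\mathfrak d}$. This can also be checked directly on worm insertion vectors: both sides send $|\underline x_1\rangle\otimes\cdots$ to $|\mathfrak d_*(\underline x_1)\cdots\rangle_D$, since $(\mathfrak d_1)_*(\mathfrak d_2)_* = \mathfrak d_*$. The Reeh--Schlieder theorem then gives equality of the bounded operators. The displayed expression is therefore exactly $Y_\cR$ computed with the same boundary unitaries, up to the tensor-factor reordering implicit in the notation $\circ_i$.

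The only remaining point is the scalar $c$. It arises from the phase ambiguity of the implementing unitaries, and from Corollary~\ref{cor: YR well defined} if $\cR$ is presented differently from the composite presentation $\mathfrak d_1\circ_i\mathfrak d_2$. In that case one first replaces $\mathfrak d$ by the composite presentation at the cost of a nonzero scalar. The argument has no genuine obstacle. The one place to be careful is making consistent choices of the phase-ambiguous unitaries $U_{\gamma_i}$ on the glued circle, which is what makes the middle factor cancel.
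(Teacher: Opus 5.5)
Your proposal is correct and follows the paper's proof: you unwind Definition~\ref{def: YR}, cancel the unitaries $U_{\gamma_i}U_{\gamma_i}^*$ on the glued circle, and invoke the composition law $Y_{\mathfrak d}=Y_{\mathfrak d_1}\circ_i Y_{\mathfrak d_2}$ from Theorem~\ref{thm: Yd exists}. As the paper notes, with the composite presentation and consistent phase choices one even gets $c=1$, so the scalar reflects only that $Y_\cR$ is defined up to phase, and your appeal to Corollary~\ref{cor: YR well defined} is unnecessary here.
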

\begin{proof}
    The desired identity is an immediate consequence of the definition of $Y_\cR$ (Definition~\ref{def: YR}) and the fact that $Y_{\mathfrak d} = Y_{\mathfrak d_1} \circ_i Y_{\mathfrak d_2}$ (Theorem~\ref{thm: Yd exists}).

(We note that the particular presentations of $\cR$, $\cR_1$, and $\cR_2$ in terms of $\mathfrak d$, $\mathfrak d_1$, and $\mathfrak d_2$ produce the identity $Y_\cR = Y_{\cR_1} \circ_i Y_{\cR_2}$ without the scalar $c$, but the operators $Y_\cR$ are only well-defined up to a scalar.)
\end{proof}

\section{The trace class condition}

In this section we show that all conformal nets satisfy the trace class condition, which is to say that the operators $r^{L_0}:H_0 \to H_0$ are trace class for all $0 \le r < 1$.
In particular, this implies that conformal nets have finite-dimensional $L_0$-eigenspaces.
We first explain why this should follow from Theorem~\ref{thm: Yd exists}.
Suppose we have a multidisc embedding $\mathfrak d: \bbD \sqcup \bbD \to \bbD$, where $\bbD$ is the standard unit disc.
We then have an associated bounded linear map $Y_{\mathfrak d}:H_0 \otimes H_0 \to H_0$, constructed in Theorem~\ref{thm: Yd exists}.
The vector $Y_{\mathfrak d}^*\Omega \in H_0 \otimes H_0$ corresponds to an antilinear Hilbert-Schmidt operator on $H_0$, and we can show that
this operator is associated to the annulus $\bbC P^1 \setminus  \mathfrak d(\mathring \bbD \sqcup \mathring \bbD)$ obtained by capping off the outgoing boundary of $\cR:=\bbD \setminus \mathfrak d(\mathring \bbD \sqcup \mathring \bbD)$.
After uniformizing this annulus, we see that $r^{L_0}$ is Hilbert-Schmidt for a certain value of $r < 1$, which can be made arbitrarily close to $1$ by bringing the two embedded copies of $\bbD$ closer together.
We may write any operator $r^{L_0}$ as a product of two such operators, and thus they are trace class and not just Hilbert-Schmidt.

We now give the detailed argument:

\begin{thm}\label{thm: rL0 trace class}
Let $\cA$ be a conformal net (not assumed to satisfy the condition that its $L_0$-eigenspaces are finite-dimensional), with vacuum Hilbert space $H_0$. Then the operator $r^{L_0}$ acting on $H_0$ is trace class whenever $0 \le r < 1$.

In particular, the eigenspaces of $L_0$ are finite-dimensional.
\end{thm}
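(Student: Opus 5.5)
Fix a multidisc embedding $\mathfrak d:\bbD\sqcup\bbD\to\bbD$, say the inclusion of two disjoint round subdiscs $D_1,D_2\subset\mathring\bbD$, with the identifications $H_0(D_j)\cong H_0$ given by Möbius (affine) maps $\bbD\to D_j$. Theorem~\ref{thm: Yd exists} gives a bounded $Y_{\mathfrak d}:H_0\otimes H_0\to H_0$. The vector $\eta:=Y_{\mathfrak d}^*\Omega\in H_0\otimes H_0$ corresponds to a Hilbert--Schmidt operator. To make it linear rather than antilinear, compose with the modular conjugation / PCT antiunitary $\Theta$ coming from the antiholomorphic reflection of $D_2$. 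This gives a Hilbert--Schmidt operator $T:H_0\to H_0$ defined by $\langle \eta,\xi_1\otimes\Theta\xi_2\rangle=\langle T\xi_2,\xi_1\rangle$, or with a convention fixed so that everything is linear.

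The key step is to identify $T$ with a scalar multiple of an annulus operator. The claim is that $T=c\cdot\underline{A}$, where $A$ is the annulus $\bbC P^1\setminus(\mathring D_1\sqcup\mathring D_2)$ with its two boundaries parametrized, one of them reflected. I would prove this by testing on worm insertions. For $x\in\cA(I)$ with $I\subset D_1$ and $y\in\cA(I')$ with $I'\subset D_2$, we have
\[
\langle \eta, |x\rangle_{D_1}\otimes|y\rangle_{D_2}\rangle=\langle\Omega,|x\,y\rangle_{\bbD}\rangle .
\]
The right-hand side is a vacuum expectation, which is invariant under the Möbius group. The plan is to use Möbius covariance, together with the gluing formulas \eqref{eqn: annuli with worms acting on disc with worms}, to rewrite it as $\langle \Theta|y\rangle, A[x]\Omega\rangle$-type pairings, up to a constant. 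By Reeh--Schlieder density of worm vectors, this identifies $T$.

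A cleaner alternative, which I would try first, avoids the identification. Choose the two discs so that the configuration is invariant under an antiholomorphic reflection swapping them. For $x\in\cA(J_1)$, with $J_1$ an interval in $\partial D_1\cap\partial\bbD$ if we allow boundary-touching discs, the $\cA(J_1)\otimes\cA(J_2)$-equivariance of $Y_{\mathfrak d}$ then shows that $T$ intertwines the appropriate local algebras. Combined with covariance under the rotation subgroup fixing the configuration, this would force $T$ to be a function of $L_0$, namely $r^{L_0}$ up to a scalar.

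Concretely, I would take $D_1,D_2$ symmetric about the origin, so that the capped annulus is conformally equivalent to $\{r_0\le |z|\le 1\}$. Uniformizing, with boundary parametrizations chosen to match, makes the annulus operator equal to $r_0^{L_0}$ up to a scalar and unitaries. By Corollary~\ref{cor: YR well defined}, the choice of parametrization only changes $T$ by unitaries and a nonzero scalar. So $r_0^{L_0}$ is Hilbert--Schmidt. As $D_1,D_2$ approach each other, filling most of $\bbD$, the modulus of the capped annulus grows, which gives $r_0\to1$. Hence $r^{L_0}$ is Hilbert--Schmidt for all $r<1$, because $r^{L_0}=r_0^{L_0}\,(r/r_0)^{L_0}$ with $(r/r_0)^{L_0}$ bounded once $r\le r_0$. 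Then $r^{L_0}=(r^{1/2})^{L_0}(r^{1/2})^{L_0}$ is a product of two Hilbert--Schmidt operators, hence trace class. Finally, a trace class operator has finite-dimensional eigenspaces for nonzero eigenvalues, so each $\ker(L_0-n)$, being an $r^n$-eigenspace, is finite-dimensional. The case $r=0$ is the projection onto $\bbC\Omega$.

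The main obstacle is the identification of $Y_{\mathfrak d}^*\Omega$ with the annulus operator of the capped-off surface. This amounts to a "capping" gluing law that mixes the outgoing boundary with $\Omega^*$, and it is not directly among the established results. I would prove it on the dense span of products $|x\rangle_{D_1}\otimes|y\rangle_{D_2}$, computing
\[
\langle\Omega,|x\,y\rangle_\bbD\rangle=\langle |x^{\dagger}\rangle_{\overline{D_1}}\,,\,|y\rangle\rangle
\]
via the reflection $z\mapsto 1/\bar z$ and the $\dagger$-structure of $\Ann_c$. The pairing reduces to the matrix coefficient $\langle \Theta|x\rangle,\underline A|y\rangle\rangle$. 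Bounded operators agreeing on these dense vectors are then equal.
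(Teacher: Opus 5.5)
Your main line is the paper's proof. Boundedness of $Y_{\mathfrak d}$ on $H_0\otimes H_0$, paired against $\Omega$ and twisted by $\Theta$ in one slot, shows that an annulus operator $\underline A_k=U_{\varphi_1}t^{L_0}U_{\varphi_2}$ is Hilbert--Schmidt, with $t\to1$ as the two discs approach each other; $r^{L_0}$ is then a product of two Hilbert--Schmidt operators. The identification you flag as the main obstacle is carried out just as your last paragraph suggests, and no general capping law is needed because one disc is taken concentric ($g(w)=sw$, $f(w)=rw+z$): combining $Y_{\mathfrak d}(|x\rangle\otimes -)=\underline A_g[f_*(x)]$, $\Theta x\Omega=\vartheta_*(x^*)\Omega$ for $x\in\cA(I_+)$, the adjoint formula for annuli with worms, and the self-reflection $w\mapsto s\bar w^{-1}$ of the round annulus $A_g$ gives $\ip{Y_{\mathfrak d}(\Theta x\Omega\otimes\xi),\Omega}=\ip{\xi,\underline A_k x\Omega}$ with $k(w)=s/(rw+\bar z)$. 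Drop your symmetry-based ``cleaner alternative'': no one-parameter group of automorphisms of $\bbD$ preserves two disjoint discs, but nothing in your main line depends on it.
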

\begin{proof}
It suffices to show that the operators $r^{L_0}$ are Hilbert-Schmidt, as we may factor $r^{L_0} = r_1^{L_0} r_2^{L_0}$, and the product of Hilbert-Schmidt operators is trace class.

Suppose $0 < r,s < 1$ and $z \in \mathring \bbD$ are chosen so that the discs $s \bbD$ and $(z + r \bbD)$ are disjoint and contained in $\mathring \bbD$ (i.e. $s + r < \abs{z} < 1-r$).
Let $f,g:\bbD \to \bbD$ be the univalent maps given by $f(w) = rw + z$ and $g(w) = sw$, and observe that these maps have disjoint image.
Let $\mathfrak d: \bbD \sqcup \bbD \to \bbD$ be the multidisc embedding given by $f$ on the first copy of $\bbD$ and $g$ on the second copy of $\bbD$, and let $Y_{r,s,z}=Y_{\mathfrak d}:H_0 \otimes H_0 \to H_0$ be the corresponding operator.
Let $A_g$ be the annulus corresponding to the univalent map $g$
\[
A_g := \bbD \setminus g(\mathring \bbD) = \bbD \setminus s \mathring \bbD,
\]
and let $\underline A_g \in \Univ$ be the corresponding parametrised annulus with its canonical lift to $\Ann_c$ (Definition~\ref{def: univ}).
Let $x$ and $y$ be worm insertions in $\bbD$, and observe that
\[
Y_{r,s,z}(|x\rangle \otimes |y\rangle) = |f_*(x) g_*(y) \rangle = \underline A_g[f_*(x)] |y \rangle,
\]
where the first equality is the characterizing property of the map $Y_{r,s,z}$ and the second equality is \cite[Lem. 5.4]{HenriquesTenerWorms}.
It follows that
\begin{equation}\label{eqn: Yrsz one insertion one worm}
Y_{r,s,z}(|x\rangle \otimes - ) = \underline A_g[f_*(x)]
\end{equation}
as operators $H_0 \to H_0$, since both operators are bounded and agree on a dense domain (by the Reeh-Schlieder theorem).

Let $I_\pm \subset \partial \bbD$ be the upper (resp. lower) semicircle, and let $\Theta:H_0 \to H_0$ be the modular conjugation associated to $\cA(I_+)$ and the vacuum vector $\Omega$.
Let $\vartheta:\bbC \to \bbC$ be complex conjugation $\vartheta(z) = \bar z$.
As explained in \cite[\S2A]{BartelsDouglasHenriques15} (see also \cite[Lem. 5.12]{HenriquesTenerWorms}), for $x \in \cA(I_+)$ we have 
\begin{equation}\label{eqn: theta on xOmega}
\Theta x\Omega = \vartheta_*(x^*)\Omega,
\end{equation}
where $\vartheta$ is regarded as an orientation-reversing diffeomorphism $I_+ \to I_-$, and $\vartheta_*:\cA(I_+) \to \cA(I_-)^{op}$ is the induced (linear) isomorphism.
Thus for $x \in \cA(I_+)$ we have
\begin{align}
\nonumber \ip{Y_{r,s,z}(\Theta x\Omega \otimes \xi), \Omega} &= 
\ip{Y_{r,s,z}(|\vartheta_*(x^*)\rangle \otimes \xi), \Omega} =
\ip{\underline A_g[(f \circ \vartheta)_*(x^*)] \xi,\Omega}\\ 
&=  \ip{\xi, \underline A_g^\dagger[(f \circ \vartheta)_*(x)]\Omega} \label{eqn: Yrsz vacuum expectation CN}
\end{align}
where the first equality is \eqref{eqn: theta on xOmega}, the second equality is \eqref{eqn: Yrsz one insertion one worm}, and the final equality is \cite[Lem. 5.7]{HenriquesTenerWorms}.
Let $k:\bbD \to \bbD$ be the composite
\[
k = (w \mapsto s \bar w ^{-1}) \circ f \circ \vartheta = (w \mapsto \tfrac{s}{rw + \bar z}).
\]
Using the identification $\underline A_g^\dagger \cong \underline A_g$ induced by the reflection $w \mapsto s\bar w^{-1}$, we have
\begin{equation}\label{eqn: Agdagger CN}
\underline A_g^\dagger[(f \circ \vartheta)_*(x)]\Omega = \underline A_g[k_*(x)]\Omega = |k_*(x)\rangle = \underline A_k x\Omega
\end{equation}
where the first equality is \cite[Cor. 5.13]{HenriquesTenerWorms}, and the last two equalities are \cite[Lem. 5.4]{HenriquesTenerWorms}.
Combining \eqref{eqn: Yrsz vacuum expectation CN} and \eqref{eqn: Agdagger CN}, we have shown that 
\[
\ip{Y_{r,s,z}(\Theta x\Omega \otimes \xi), \Omega} = \ip{\xi, \underline A_k x\Omega}.
\]
Vectors of the form $x\Omega$ are dense by the Reeh-Schlieder theorem, and $\Theta$ is antiunitary.
Since $Y_{r,s,z}$ is a bounded operator, it follows that $\eta \otimes \xi \mapsto \ip{\xi, \underline A_k\eta}$ is a bounded linear functional $\overline H_0 \otimes H_0 \to \bbC$, and hence that $\underline A_k$ is Hilbert-Schmidt.

We may choose an automorphism of $\bbD$ which maps $k(\bbD)$ to a disc $t\bbD$, and thus we may decompose
\[
\underline A_k = U_{\varphi_1} t^{L_0} U_{\varphi_2}
\]
where $\varphi_j \in \Mob$ and $U_{\varphi_j}:H_0 \to H_0$ are the associated unitary operators.
It follows that $t^{L_0}$ is Hilbert-Schmidt.
For fixed values of $z$ and $r$, letting $s$ approach $\abs{z}-r$ produces values of $t$ which converge to $1$.
Hence $t^{L_0}$ is Hilbert-Schmidt for all $t<1$, completing the proof.
\end{proof}

\begin{cor}\label{cor: Yd trace class}
Let $\mathfrak d:D_1 \sqcup \cdots \sqcup D_n \to D$ be a multidisc embedding whose image lies in the interior $\mathring D$. Let $Y_{\mathfrak d}:H_0(D_1) \otimes \cdots \otimes H_0(D_n) \to H_0(D)$ be the bounded operator constructed in Theorem~\ref{thm: Yd exists}.
Then $Y_{\mathfrak d}$ is trace class.
\end{cor}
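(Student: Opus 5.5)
Since every embedded disc lies in $\mathring D$, I will factor $Y_{\mathfrak d}$ through a "thickened" configuration so that a trace class operator built from $r^{L_0}$ appears as a factor. The argument then rests on Theorem~\ref{thm: rL0 trace class} and on the fact that trace class operators form an ideal, stable under tensoring with bounded operators on the remaining factors.

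First I will choose slightly larger disjoint discs $D_j' \subset \mathring D$ with $\mathfrak d(D_j) \subset \mathring D_j'$; this is possible because the images $\mathfrak d(D_j)$ are compact, disjoint, and contained in the open set $\mathring D$. Writing $\mathfrak d'$ for the inclusion $D_1'\sqcup\cdots\sqcup D_n'\to D$ and $\mathfrak a_j:D_j\to D_j'$ for the restrictions of $\mathfrak d$, the operad law from Theorem~\ref{thm: Yd exists} gives
\[
Y_{\mathfrak d}=Y_{\mathfrak d'}\circ\big(Y_{\mathfrak a_1}\otimes\cdots\otimes Y_{\mathfrak a_n}\big),
\]
and $Y_{\mathfrak d'}$ is bounded. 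Each $Y_{\mathfrak a_j}$ is $Y_{A_j}$ for an annulus $A_j=D_j'\setminus\mathring{\mathfrak d(D_j)}$ whose inner boundary lies in the interior (Remark~\ref{rem: Yd when n is 0 or 1}). The key step is to show that each $Y_{A_j}$ is trace class.

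To do this, I will identify $D_j'\cong\bbD$ and $D_j\cong\bbD$, so that $\mathfrak a_j$ becomes a univalent map $h:\bbD\to\mathring\bbD$ with compact image. Composing with a M\"obius map, I may assume $h(0)=0$. Then $h(\bbD)\subset\rho\bbD$ for some $\rho<1$, and I can factor $h=m_\rho\circ h'$ with $m_\rho(w)=\rho w$ and $h'=\rho^{-1}h$ univalent into $\bbD$. The operad law for $n=1$ (equivalently, $\Univ\to\Ann_c$ being a semigroup homomorphism with standard lifts) then gives $Y_{A_j}=\pi(m_\rho)\pi(h')$ up to the unitaries $U$ that identify the discs. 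Here $\pi(m_\rho)=\rho^{L_0}$, which is trace class by Theorem~\ref{thm: rL0 trace class}, and $\pi(h')$ is bounded. Hence $Y_{A_j}$ is trace class.

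Finally, a tensor product of trace class operators is trace class, with $\|T_1\otimes\cdots\otimes T_n\|_1=\prod\|T_j\|_1$. So $Y_{\mathfrak a_1}\otimes\cdots\otimes Y_{\mathfrak a_n}$ is trace class, and composing with the bounded operator $Y_{\mathfrak d'}$ keeps it trace class. The degenerate cases need no work: for $n=0$ the map is rank one, and for $n=1$ the claim is exactly the annulus argument above.

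The main obstacle I expect is making the factorization $Y_{A_j}=\rho^{L_0}\cdot(\text{bounded})$ precise, and in particular checking that the dilation $w\mapsto\rho w$ acts exactly by $\rho^{L_0}$. I would derive this from the characterizing property together with Remark~\ref{rem: Yd when n is 0 or 1}, which shows that the choice of lift does not matter. Beyond that, the only point needing care is ensuring that the identifications of discs by biholomorphisms contribute only unitaries.
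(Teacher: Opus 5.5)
Your argument is correct, but it factors $Y_{\mathfrak d}$ on the opposite side from the paper. The paper chooses a single disc $D'\subset\mathring D$ containing all the images $\mathfrak d(D_j)$ and writes $Y_{\mathfrak d}=Y_A\circ Y_{\mathfrak d'}$ with $A=D\setminus\mathring D'$. One trace class factor on the outgoing side then suffices, and composing with the bounded $Y_{\mathfrak d'}$ finishes. You instead thicken each incoming disc separately, which gives $n$ trace class annulus operators $Y_{\mathfrak a_j}$ on the incoming side. You then need the (true) fact that a Hilbert tensor product of trace class operators is trace class before composing with $Y_{\mathfrak d'}$. The paper's route is shorter and avoids the tensor-product step. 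Yours has the merit of spelling out why an annulus operator with inner boundary in the interior is trace class: you factor $h=m_\rho\circ h'$ in $\Univ$ and use $\pi(m_\rho)=\rho^{L_0}$. The paper compresses this step, on which both proofs rest, into ``by the previous theorem''. The ``main obstacle'' you flag is harmless: if $\pi(m_\rho)$ agreed with $\rho^{L_0}$ only up to a nonzero scalar, trace class would still follow. The paper itself uses $\underline A_{m_t}=t^{L_0}$ without comment in the proof of Theorem~\ref{thm: rL0 trace class}. Your M\"obius normalisation $h(0)=0$ is also unnecessary, since compactness of $h(\bbD)$ in $\mathring\bbD$ already gives $h(\bbD)\subset\rho\bbD$.
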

\begin{proof}
Choose $D'\subset \mathring D$ such that $\mathfrak d$ factors as $D_1 \sqcup \cdots \sqcup D_n \overset{\mathfrak d'}{\to} D' \to D$ and let $A := D \setminus \mathring  D'$. Then $Y_A$ is trace class by the previous theorem, hence so is
$Y_{\mathfrak d} = Y_A \circ Y_{\mathfrak d'}$.
\end{proof}

\appendix

\section{Point insertions}\label{sec: point insertions}
Let $\cA$ be a conformal net, and let $V \subset H_0$ be the subspace of finite-energy vectors (spanned by eigenvectors for $L_0$).
By Theorem~\ref{thm: rL0 trace class}, the eigenspaces for $L_0$ are finite-dimensional.
As a result, by the main theorem of \cite{HenriquesTenerWorms}, there is a unitary vertex operator algebra (VOA) associated with the conformal net $\cA$, whose underlying vector space is $V$.
This unitary VOA was constructed by first introducing vectors in $H_0$ associated to insertions of finite-energy vectors at points inside a disc, analogous to the vectors \eqref{eq: skdjnbksdfksb} associated to worm insertions.
We now briefly describe the relationship between point insertions and worm insertions; for more detail, see \cite[\S6]{HenriquesTenerWorms}.

The space $V$ comes equipped with a positive energy representation $L_n$ of the Virasoro algebra, obtained by differentiating the representation of $\Diff_c(S^1)$ associated to the conformal net.
The operator $L_0$ acts diagonally with nonnegative integer eigenvalues, and as discussed above its eigenspaces are finite-dimensional.
The operators $L_n$ for $n>0$ act locally nilpotently on $V$, and thus the action of the Lie algebra $\Vir_{\ge 0} := \spann\{L_n:n\ge 0\}$ integrates to an action of the pro-algebraic group $\Aut(\bbC[[x]])$ on $V$ (see \cite[\S6.1]{HenriquesTenerWorms} for further discussion).
The group $\Aut(\bbC[[x]])$ may be regarded as the group of changes of formal local coordinate at $0 \in \bbC$.

Given a Riemann surface $\Sigma$ and a point $z\in\Sigma$, define
\begin{equation}\label{eq: def V(z)}
V(z) := V \times_{\Aut(\bbC[[x]])} \mathrm{Isom}(\cO_z(\Sigma),\bbC[[x]]),
\end{equation}
where $\cO_z(\Sigma)$ is the formal completion of the ring of germs of holomorphic functions at $z$, and $\mathrm{Isom}(\cO_z(\Sigma),\bbC[[x]])$ is the set of $\bbC$-algebra isomorphisms from $\cO_z(\Sigma)$ to $\bbC[[x]]$ (i.e., the set of formal coordinates at $z$).
This construction is functorial in the sense that given another Riemann surface $\Sigma'$ and a locally defined holomorphic isomorphism $f:\Sigma\to \Sigma'$ defined on a neighborhood of $z$, we have an induced linear map
\[
f_*:V(z) \to V(f(z)).
\]

For $z=0\in \bbC$, there is a canonical isomorphism between $V$ and $V(0)$ induced by the identity local coordinate, explicitly given by the formula $v\mapsto (v,\mathrm{id}_{\bbC[[x]]})$, and it is convenient to identify these two vector spaces.
More generally, given a point $z\in \bbC$, there is an isomorphism $V(0) \to V(z)$ induced by the local coordinate $\zeta\mapsto \zeta+z$.
Given $v\in V$, we will write $v(z)\in V(z)$ for its image under this isomorphism.

Given a disc $D$, distinct points $z_1, \ldots, z_n \in \mathring D$, and vectors $v_j \in V(z_j)$ for $j=1, \ldots, n$, we constructed in \cite[\S6.2]{HenriquesTenerWorms} a vector
\[
|v_1 \cdots v_n \rangle_D \in H_0(D).
\]
These point insertions encode the state-field correspondence $Y:V \to \End(V)[[z^{\pm 1}]]$ of the vertex algebra  $V$ in the following way.
Given $u_1, \ldots, u_n \in V$, the vectors $|u_1(z_1) \cdots u_n(z_n)\rangle_{\bbD} \in H_0(\bbD)$ depend holomorphically on the choice of distinct points $z_j \in \mathring \bbD$, and the power series expansion of this function in the domain $\abs{z_1} > \cdots > \abs{z_n}$ is given by $Y(u_1,z_1) \cdots Y(u_n,z_n)\Omega$.

The two kinds of insertions, along intervals and along points, are linked in the following way.
First, given a disc $D$ and a point $z \in \mathring D$, there is a natural inclusion $V(z) \hookrightarrow H_0(D)$ \cite[Lem. 6.4]{HenriquesTenerWorms}.
Second, given a vector $v \in V(z)$, there exist intervals $I_1,I_2 \subset \partial D$ and algebra elements $x_j \in \cA(I_j)$ such that $x_1\Omega_D + x_2\Omega_D = v$.
Now given distinct points $z_1, \ldots, z_n \in \mathring D$ and vectors $v_j \in V(z_j)$, choose disjoint discs $D_j \subset \mathring D$, intervals $I_{1,j},I_{2,j} \subset \partial D_j$, and algebra elements $x_{\epsilon,j} \in \cA(I_{\epsilon,j})$ such that $v_j = x_{1,j}\Omega_{D_j} + x_{2,j}\Omega_{D_j}$.
Then, for any such choice of discs $D_j$, intervals $I_{\epsilon,j}$, and algebra elements $x_{\epsilon,j}$, we have
\begin{equation}\label{eqn: def of point insertion in disc}
| v_1 \ldots v_n \rangle_D = \sum_{(\epsilon_1, \ldots, \epsilon_n) \in \{1,2\}^n} | x_{\epsilon_1,1} \cdots x_{\epsilon_n,n} \rangle_D.
\end{equation}
The independence of such expressions on the choices was established in \cite[Lem. 6.7]{HenriquesTenerWorms}.

Finally, as with worms, given a disc $D$, it is sometimes convenient to write $\underline z$ for a tuple $(z_1, \ldots, z_n) \in \mathring D^n$ of distinct points.
If we set $V(\underline z) = V(z_1) \times \cdots \times V(z_n)$, then for $\underline v \in V(\underline z)$, we define
\[
|\underline v\rangle_D := | v_1 \cdots v_n \rangle_D.
\]
Given a collection $\underline v_1, \ldots, \underline v_m$ of such tuples, with all points distinct, we give $|\underline v_1 \cdots \underline v_m \rangle_D$ the obvious meaning of performing all of the insertions associated with the tuples $\underline v_j$.

With the necessary terminology and notation established, we have the following corollaries of Theorem~\ref{thm: Yd exists}:

\begin{cor}\label{cor: action of Yd on point insertions}
    Let $\cA$ be a conformal net with vacuum Hilbert space $H_0$, and let $V \subset H_0$ be the subspace of finite-energy vectors.
    Let $\mathfrak d:D_1 \sqcup \cdots \sqcup D_n \to D$ be a multidisc embedding.
    For $j=1, \ldots, n$, let $\underline z_j$ be a tuple of distinct points in $\mathring D_j$, and let $\underline v_j \in V(\underline z_j)$ be a tuple of vectors.
    Then the bounded operator $Y_{\mathfrak d}: H_0(D_1) \otimes \cdots \otimes H_0(D_n) \to H_0(D)$ constructed in Theorem~\ref{thm: Yd exists} satisfies
    \begin{equation*}\label{eqn: characterizing property of YR points}
    Y_{\mathfrak d}(|\underline v_1\rangle_{D_1} \otimes \cdots \otimes |\underline v_n\rangle_{D_n}) = | \mathfrak d_*(\underline v_1) \cdots \mathfrak d_*(\underline v_n) \rangle_D.
    \end{equation*}
\end{cor}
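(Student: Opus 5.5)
The plan is to reduce point insertions to worm insertions using \eqref{eqn: def of point insertion in disc}, and then invoke the characterizing property \eqref{eqn: characterizing property of Yd def} of $Y_{\mathfrak d}$ from Theorem~\ref{thm: Yd exists}. Write $\underline z_j = (z_{j,1},\ldots,z_{j,k_j})$ and $\underline v_j = (v_{j,1},\ldots,v_{j,k_j})$.

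For each point $z_{j,i}\in\mathring D_j$, choose small discs $E_{j,i}\subset \mathring D_j$ around $z_{j,i}$, pairwise disjoint for fixed $j$. Choose intervals $I_{1,j,i}, I_{2,j,i}\subset \partial E_{j,i}$ and algebra elements $x_{\epsilon,j,i}\in\cA(I_{\epsilon,j,i})$ with $v_{j,i} = x_{1,j,i}\Omega_{E_{j,i}} + x_{2,j,i}\Omega_{E_{j,i}}$. These intervals are disjoint and extendable in $D_j$, since they lie in the interior. By \eqref{eqn: def of point insertion in disc}, $|\underline v_j\rangle_{D_j}$ is then a finite sum over $\underline\epsilon_j\in\{1,2\}^{k_j}$ of worm-insertion vectors $|x_{\epsilon_{1},j,1}\cdots x_{\epsilon_{k_j},j,k_j}\rangle_{D_j}$.

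Expand the tensor product $|\underline v_1\rangle_{D_1}\otimes\cdots\otimes|\underline v_n\rangle_{D_n}$ multilinearly and apply $Y_{\mathfrak d}$ to each term using \eqref{eqn: characterizing property of Yd def}. This gives the sum over all $(\underline\epsilon_1,\ldots,\underline\epsilon_n)$ of $|\mathfrak d_*(x_{\underline\epsilon_1,1})\cdots\mathfrak d_*(x_{\underline\epsilon_n,n})\rangle_D$.

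The remaining step, which is the only nontrivial point, is to recognize this sum as the point-insertion vector on the right-hand side. The discs $\mathfrak d(E_{j,i})$ are disjoint discs in $\mathring D$ centred at the points $\mathfrak d(z_{j,i})$. The intervals $\mathfrak d(I_{\epsilon,j,i})\subset\partial\mathfrak d(E_{j,i})$ carry the algebra elements $\mathfrak d_*(x_{\epsilon,j,i})$. We need
\[
\mathfrak d_*(x_{1,j,i})\Omega_{\mathfrak d(E_{j,i})} + \mathfrak d_*(x_{2,j,i})\Omega_{\mathfrak d(E_{j,i})} = \mathfrak d_*(v_{j,i}).
\]
This follows by applying the unitary $U_{\mathfrak d|_{E_{j,i}}}:H_0(E_{j,i})\to H_0(\mathfrak d(E_{j,i}))$ associated to the biholomorphism $\mathfrak d|_{E_{j,i}}$. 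That unitary fixes vacua and conjugates $x$ to $\mathfrak d_*(x)$. We also need it to be compatible with the inclusions $V(z)\hookrightarrow H_0(E)$ of \cite[Lem. 6.4]{HenriquesTenerWorms}, i.e.\ it sends $v\in V(z)$ to $\mathfrak d_*(v)\in V(\mathfrak d(z))$. This naturality is built into the definition \eqref{eq: def V(z)} via local coordinates, and I would check it directly from the construction in \cite[\S6]{HenriquesTenerWorms}. Granting it, \eqref{eqn: def of point insertion in disc} applied in $D$, using the discs $\mathfrak d(E_{j,i})$, together with the independence of choices \cite[Lem. 6.7]{HenriquesTenerWorms}, identifies the sum with $|\mathfrak d_*(\underline v_1)\cdots\mathfrak d_*(\underline v_n)\rangle_D$, completing the argument.
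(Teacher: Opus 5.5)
Your proposal is correct and follows the same route as the paper. You expand each $|\underline v_j\rangle_{D_j}$ via \eqref{eqn: def of point insertion in disc} into worm insertions, apply the characterizing property \eqref{eqn: characterizing property of Yd def} term by term, and reassemble the resulting $2^{m_1+\cdots+m_n}$ worm insertions into the point insertion in $D$. The naturality of $V(z)\hookrightarrow H_0(E)$ under the biholomorphism $\mathfrak d|_{E}$ that you flag is exactly what the paper's closing ``by definition'' uses tacitly, so stating it explicitly is a useful refinement rather than a departure.
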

\begin{proof}
    Each vector $|\underline v_j\rangle_{D_j}$ is, by definition \eqref{eqn: def of point insertion in disc}, a sum of $2^{m_j}$ insertions along intervals, where $m_j$ is the length of the tuple $\underline v_j$.
    By the characterizing property \eqref{eqn: characterizing property of Yd} of $Y_{\mathfrak d}$, the output $Y_{\mathfrak d}(|\underline v_1\rangle_{D_1} \otimes \cdots \otimes |\underline v_n\rangle_{D_n})$ is therefore equal to a sum of $2^{m_1 + \cdots m_n}$ insertions along intervals.
    By definition \eqref{eqn: def of point insertion in disc}, this sum is equal to $| \mathfrak d_*(\underline v_1) \cdots \mathfrak d_*(\underline v_n) \rangle_D$.
\end{proof}

\begin{cor}\label{cor: regularized vertex operators are trace class}
Let $\cA$ be a conformal net with vacuum Hilbert space $H_0$, and let $V \subset H_0$ be the subspace of finite-energy vectors equipped with the structure of a unitary VOA as in \cite{HenriquesTenerWorms}.
Let $Y:V \to \End(V)[[z^{\pm 1}]]$ be the state-field correspondence of this VOA.
Let $0 < r,s < 1$ and $z \in \mathring \bbD$ be such that the discs $s \bbD$ and $(z + r \bbD)$ are disjoint and contained in $\mathring \bbD$ (i.e. $s + r < \abs{z} < 1-r$).
Let $f:\bbD \to \bbD$ and $g:\bbD \to \bbD$ be the univalent maps
    \[
    f(w) = rw + z, \qquad g(w) = sw,
    \]
and let $\mathfrak d: \bbD \sqcup \bbD \to \bbD$ be the multidisc embedding which acts by $f$ on the first copy of $\bbD$ and by $g$ on the second.
Let $Y_{\mathfrak d}:H_0 \otimes H_0 \to H_0$ be the associated operator constructed in Theorem~\ref{thm: Yd exists}.
Then for $v,u \in V$ we have
\[
Y_{\mathfrak d}(v \otimes u) = Y(r^{L_0}v,z)s^{L_0}u.
\]
In particular, the expression on the right-hand side defines a map $V \otimes V \to H_0$ which extends to a trace class map $H_0 \otimes H_0 \to H_0$. 
\end{cor}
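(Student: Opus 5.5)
We need to prove $Y_{\mathfrak d}(v\otimes u)=Y(r^{L_0}v,z)s^{L_0}u$ for $u,v\in V$, and then deduce that this map is trace class. The plan is to realise both sides as point insertions and apply Corollary~\ref{cor: action of Yd on point insertions}.

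First, we identify $v\in V\subset H_0=H_0(\bbD)$ with the point insertion $|v(0)\rangle_{\bbD}$. This uses the natural inclusion $V(0)\hookrightarrow H_0(\bbD)$ of \cite[Lem. 6.4]{HenriquesTenerWorms}, which under the identification $V=V(0)$ is the inclusion $V\subset H_0$. The same holds for $u$. Corollary~\ref{cor: action of Yd on point insertions} then gives
\[
Y_{\mathfrak d}(v\otimes u)=|f_*(v(0))\,g_*(u(0))\rangle_{\bbD}.
\]

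Next, we compute the pushforwards in $V\times_{\Aut(\bbC[[x]])}\mathrm{Isom}(\cO_z,\bbC[[x]])$. The map $f(w)=rw+z$ is the composite of the scaling $w\mapsto rw$ with the translation $w\mapsto w+z$. A scaling by $r$ acts on $V(0)=V$ through the element $x\mapsto rx$ of $\Aut(\bbC[[x]])$, and this acts by $r^{L_0}$. Hence $f_*(v(0))=(r^{L_0}v)(z)$, and likewise $g_*(u(0))=(s^{L_0}u)(0)$. The convention check here is that the group element induced by $w\mapsto rw$ acts as $r^{L_0}$ and not $r^{-L_0}$. The sign is forced by the fact that, for univalent annuli, $\underline A_g$ with $g(w)=sw$ equals $s^{L_0}$, which is compatible with $Y_{\mathfrak d}$ by the $n=1$ case in the proof of Proposition~\ref{prop: holomorphic dependence}. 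I expect this bookkeeping to be the main obstacle, and I would settle it by checking on $L_0$-eigenvectors against \cite[\S6.1]{HenriquesTenerWorms}.

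We now have $Y_{\mathfrak d}(v\otimes u)=|(r^{L_0}v)(z)\,(s^{L_0}u)(0)\rangle_{\bbD}$. Since $|z|>s+r>0$, the two insertion points are distinct and satisfy $|z|>|0|$. By the description of point insertions recalled above, the vector $|u_1(z_1)u_2(z_2)\rangle_{\bbD}$ is holomorphic in $(z_1,z_2)$, and its expansion in the region $|z_1|>|z_2|$ is $Y(u_1,z_1)Y(u_2,z_2)\Omega$. Setting $z_2=0$ gives $Y(u_2,0)\Omega=u_2$ by the creation property, so this equals $Y(u_1,z_1)u_2$. The series converges in $H_0$ at $z_1=z$, because the holomorphic function is defined on all of $\{|z_1|>0\}\cap\mathring\bbD$ with $z_2=0$. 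This gives the identity with $u_1=r^{L_0}v$ and $u_2=s^{L_0}u$.

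Finally, for the trace class claim, the identity shows that the map $V\otimes V\to H_0$ given by the right-hand side is the restriction of the bounded operator $Y_{\mathfrak d}$. The image of $\mathfrak d$ lies in $\mathring\bbD$, so Corollary~\ref{cor: Yd trace class} says that $Y_{\mathfrak d}$ is trace class. Since $V\otimes V$ is dense in $H_0\otimes H_0$, this extension is unique.
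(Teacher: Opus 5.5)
Your proposal is correct and follows the paper's route. You realise $v\otimes u$ as point insertions at $0$, apply Corollary~\ref{cor: action of Yd on point insertions} to obtain $|f_*(v)\,g_*(u)\rangle_{\bbD}$, identify this vector with $Y(r^{L_0}v,z)s^{L_0}u$, and conclude with Corollary~\ref{cor: Yd trace class}. The paper compresses your middle steps into the phrase ``from the definition of the VOA structure on $V$''; those steps are the computation $f_*(v(0))=(r^{L_0}v)(z)$, which you correctly pin down via $Y_g=\underline A_g=s^{L_0}$, and the Laurent-expansion and convergence argument at $z_2=0$.
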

\begin{proof}
    From the definition of the VOA structure on $V$ and Corollary~\ref{cor: action of Yd on point insertions}, we have
    \[
    Y(r^{L_0}v,z)s^{L_0}u = | f_*(v) g_*(u) \rangle_\bbD = Y_{\mathfrak d}(v \otimes u).
    \]
    This map is trace class by Corollary~\ref{cor: Yd trace class}.
\end{proof}


\def\lfhook#1{\setbox0=\hbox{#1}{\ooalign{\hidewidth
  \lower1.5ex\hbox{'}\hidewidth\crcr\unhbox0}}}

\end{document}